\documentclass[12pt]{article}
\usepackage[margin=1in]{geometry}

\usepackage{amsmath, amsfonts,amssymb, mathtools}
\usepackage{amsthm}
\usepackage{enumerate, bbm}
\usepackage{graphicx}
\usepackage{authblk}
\usepackage{multirow}
\usepackage{booktabs}
\usepackage{xcolor}

\usepackage{natbib}
\usepackage[colorlinks,citecolor=blue,urlcolor=blue]{hyperref}

\usepackage{caption}
\usepackage{subcaption}
\usepackage{enumitem}

\usepackage{setspace}
\usepackage{parskip}
\newtheoremstyle{paperthm}
  {1em}   
  {1em}   
  {\itshape} 
  {}      
  {\bfseries} 
  {.}     
  {.5em}  
  {}      

\theoremstyle{paperthm}

\newtheorem{theorem}{Theorem}
\newtheorem{lemma}{Lemma}
\newtheorem{assumption}{Assumption}
\newtheorem{corollary}{Corollary}
\newtheorem{proposition}{Proposition}
\newtheorem{definition}{Definition}

\newtheorem{remark}{Remark}

\newcommand{\E}{\mathbb E}
\newcommand{\Pp}{\mathbb P}
\newcommand{\R}{\mathbb R}
\newcommand{\cR}{\mathcal R}
\newcommand{\cA}{\mathtt A}
\newcommand{\cB}{\mathtt B}
\newcommand{\cBD}{\mathtt{BD}}
\newcommand{\cM}{\mathcal M}
\newcommand{\cI}{\mathcal I}
\newcommand{\cJ}{J}

\newcommand{\ip}[2]{\left\langle #1,#2\right\rangle}
\newcommand{\norm}[1]{\left\lVert #1\right\rVert}
\newcommand{\abs}[1]{\left\lvert #1\right\rvert}

\newcommand{\Ht}{\mathcal H^s(L,p_{\max})}

\newcommand{\dd}{d}
\newcommand{\Var}{\operatorname{Var}}
\newcommand{\intm}{\mathrm{int}}
\newcommand{\sml}{\mathrm{small}}

\DeclarePairedDelimiterX{\infdivx}[2]{(}{)}{%
  #1\;\delimsize\|\;#2%
}

\DeclarePairedDelimiter\floor{\lfloor}{\rfloor}

\begin{document}

\title{Minimax Rates for Learning \\ Smooth Populations of Parameters}
\author[1]{JungHo Lee\thanks{\url{junghol@stat.cmu.edu}}}
\author[1]{Edward H. Kennedy\thanks{\url{edward@stat.cmu.edu}}}
\affil[1]{Department of Statistics and Data Science, Carnegie Mellon University}

\date{\today}


\maketitle

\begin{abstract}
    We study pointwise estimation of a smooth binomial mixing density and characterize its minimax rates. Assuming that the mixing density is $s$-H\"older smooth, we first consider the homogeneous setting with a common number of binomial trials $t$. We derive matching lower and upper bounds that reveal three regimes depending on the relative sizes of the sample size $n$ and the number of trials $t$. When $t$ is small, the finite number of trials creates an identification barrier that persists regardless of sample size; at intermediate $t$, more data reduce sampling uncertainty, while the number of trials limits how finely the mixing density can be recovered; and when $t$ is sufficiently large, the usual nonparametric density estimation rate is recovered. Our lower bounds exploit the polynomial structure of the binomial mixture model, while attainability is achieved by a suitably regularized orthogonal series estimator. We then extend the analysis to heterogeneous trial parameters, where the minimax rate depends on the full trial profile through the effective sample size available at each polynomial degree. These results characterize the fundamental statistical limits of learning smooth populations of binomial probabilities under both homogeneous and heterogeneous trials.
\end{abstract}

\noindent
{\it Keywords:} Binomial mixtures, mixing distribution, nonparametric density estimation, optimal rates of convergence
\vfill

\newpage
\section{Introduction}
In this paper, we consider the problem of estimating the density of a population of latent binomial probabilities. We observe independent and identically distributed samples generated according to
\[
    X_i\mid Q_i=q\sim\operatorname{Bin}(t,q),
    \qquad
    Q_i\overset{\mathrm{iid}}{\sim}p,
    \qquad i=1,\ldots,n,
\]
where the mixing density $p$ is unknown and supported on $[0,1]$. While seemingly simple, the problem is nontrivial because binomial sampling fundamentally limits identifiability; with fixed trial counts, the observed data reveal only limited features of $p$, and distinct mixing distributions can therefore generate the same observed law. This issue has long motivated work on identifiability and estimation for binomial mixtures including \cite{teicher1963identifiability, blischke1964estimating, lord1969estimating, wood1999binomial}.

The lack of unrestricted identification does not rule out useful estimation, but it makes the choice of structure or regularization central. At one extreme, one may posit a low-dimensional parametric family, e.g., a beta family, and reduce the problem to maximum likelihood estimation. A more flexible empirical Bayes approach by \citet{walter1987empiric} uses a truncated Jacobi polynomial expansion around a beta reference density, regularizing the problem by retaining only finitely many polynomial components. A different and particularly simple strategy, especially when the trial counts are large, is to treat the empirical proportion $X_i/t$ as a noisy surrogate for $Q_i$ and apply a standard density estimator under the assumed structure on $p$. Among several estimators of the underlying distribution, \citet{ye2021binomial} analyze a histogram estimator under a smooth density condition, and \citet{lee2025learning} study kernel density estimators under general H\"{o}lder smoothness and heterogeneous trials. 

Importantly, these results are upper bounds for particular estimators and do not characterize the fundamental difficulty of the problem. It remains unclear whether estimators based directly on empirical proportions are rate-optimal, or how the optimal rate depends jointly on sample size, assumed structure, and trials. More generally, to our knowledge, the minimax rates for estimating a smooth binomial mixing density remain unknown; most of the optimality theory for binomial mixtures has revolved around estimating the mixing \emph{distribution} under weak structural assumptions, with error measured in Wasserstein-1 distance \citep{tian2017learning, vinayak2019optimal, vinayak2019maximum}. While there are appeals to being structure-agnostic, the caveat is that the resulting rates are often slow, which may be particularly consequential in these measurement error problems where additional structure can substantially improve the attainable rates \citep{carroll1988optimal,fan1991optimal,fan1992deconvolution}.

To this end, we derive a lower bound on the minimax rate for pointwise estimation of an $s$-H\"{o}lder smooth mixing density in Section~\ref{sec:fundamental_limits}. Our derivation is based on Le Cam's two-point method and exploits the polynomial structure of the binomial mixture model, constructing smooth perturbations whose effects are either strongly attenuated by binomial sampling or completely invisible in the observed distribution. At a high level, our rates can be summarized as follows: in the ``small'' and ``intermediate'' $t$ regimes, binomial sampling limits the amount of detail about the mixing distribution that can be recovered, while smoothness determines how this loss of detail translates into pointwise density error. Once the number of trials is sufficiently large, the usual nonparametric smooth density estimation rate is recovered.

In Section~\ref{sec:attainability}, we show that the lower bound is tight by constructing a regularized orthogonal series estimator tailored to the binomial setting. We expand the mixing density in Legendre polynomials and exploit discrete Hahn polynomials to estimate its Legendre coefficients. Since the higher-order coefficients become increasingly unstable to recover, we regularize the resulting series; we call the resulting procedure the \emph{Legendre--Hahn series estimator}, and show that it attains the minimax rate.

Finally, in Section~\ref{sec:trial_heterogeneity}, we extend both the lower and upper bounds to heterogeneous trial parameters, which arise in a wide array of applications such as sports \citep{brown2008inseason}, ecology \citep{royle2004n,wenger2008estimating,dail2011models,wu2015bayesian}, and the social sciences \citep{kline2021reasonable,avivi2021adaptive,lee2025learning}. Our analysis shows that, under trial heterogeneity, the minimax rate depends on the full trial profile through the effective sample size for each polynomial degree.

\section{Setup and notation}\label{sec:setup}
We observe independent and identically distributed random variables $X_1,\ldots,X_n$ generated according to the model
\[
    X_i\mid Q_i=q\sim\operatorname{Bin}(t,q),
    \qquad
    Q_i\overset{\mathrm{iid}}{\sim}p,
    \qquad i=1,\ldots,n,
\]
where $p$ is an unknown density supported on $[0,1]$, and the common trial parameter $t\in\mathbb N$ is fixed and known. Throughout the main analysis, we focus on this homogeneous trial setting in which every observation has the same trial parameter $t$. In Section \ref{sec:trial_heterogeneity}, we extend our results to independent, non-identically distributed observations
\[
    X_i\mid Q_i=q\sim\operatorname{Bin}(t_i,q),
    \qquad
    Q_i\overset{\mathrm{iid}}{\sim}p, \qquad i=1,\ldots,n,
\]
where the trial parameters $t_1,\ldots,t_n$ are allowed to vary.

As alluded to, the key structure we adopt for the mixing density is H\"{o}lder smoothness. Let $s>0$ denote the smoothness parameter and write $s=r+\alpha$, where $r$ is the largest integer strictly smaller than $s$ and $0<\alpha\leq1$. In particular, when $s$ is an integer, we have $r=s-1$ and $\alpha=1$. For a function $f:[0,1]\to\R$, we define its $s$-H\"older norm as
\[
    \norm{f}_{C^s([0,1])} \equiv \sum_{j=0}^r\norm{f^{(j)}}_\infty + [f^{(r)}]_{C^\alpha([0,1])},
\]
with $[f^{(r)}]_{C^\alpha([0,1])}$ denoting the H\"older seminorm
\[
    [f]_{C^\alpha([0,1])} \equiv \sup_{\substack{q,q'\in[0,1]\\q\neq q'}}\frac{\abs{f(q)-f(q')}}{\abs{q-q'}^\alpha}.
\]
For constants $L,p_{\max}\in(0,\infty)$, the H\"{o}lder class of interest is
\[
    \Ht \equiv \left\{ p \geq 0: \int_0^1p(q)\dd q=1,\ \norm{p}_{C^s([0,1])}\leq L,\ \norm{p}_\infty\leq p_{\max}
    \right\}.
\]

Our goal is to estimate the density value $p(u)$ at a fixed interior point $u\in[\delta,1-\delta]$, where $\delta\in(0,1/2)$. More precisely, we study the minimax absolute pointwise risk
\[
    \cR_{n,t}
    \equiv
    \inf_{\widehat p}
    \sup_{p\in\Ht}
    \E_p\abs{\widehat p(u)-p(u)},
\]
where the infimum is over all estimators $\widehat p(u)$ measurable with respect to the observed sample $(X_1,\ldots,X_n)$, and $\E_p$ denotes expectation under the distribution induced by the mixing density $p$.

For real-valued functions $f$ and $g$ on $[0,1]$, we write
\[
    \ip{f}{g}_{L_2} \equiv \int_0^1 f(q)g(q)\dd q,
    \qquad
    \norm{f}_{L_2} \equiv \left\{\int_0^1f(q)^2\dd q\right\}^{1/2},
\]
and $\|f\|_\infty \equiv \sup_{u} |f(u)|$ for the sup norm. Further, for real-valued functions $f$ and $g$ on $\{0,\ldots,t\}$, we define the discrete inner product and the associated norm by
\[
    \langle f,g\rangle_{d,t} \equiv \frac{1}{t+1}\sum_{x=0}^{t}f(x)g(x), \qquad \|f\|_{d,t} \equiv \langle f,f\rangle_{d,t}^{1/2}.
\]

We use $\Pi_m$ to denote the space of algebraic polynomials on $[0,1]$ of degree at most $m$. For nonnegative quantities $a$ and $b$, we write $a\lesssim b$ if $a\leq Cb$ for some constant $C>0$, $a\gtrsim b$ if $a\geq Cb$, and $a\asymp b$ if both $a\lesssim b$ and $a\gtrsim b$. 
For probability measures $P_1,\ldots,P_n$, we write $\bigotimes_{i=1}^n P_i$ for their product measure. In particular, $P^{\otimes n}$ denotes the $n$-fold product of $P$.

\section{Fundamental limits}\label{sec:fundamental_limits}

We begin by providing a lower bound on the error of any estimator of the smooth binomial mixing density, which serves two related purposes. First, it provides a benchmark against which estimators can be evaluated. If an estimator attains the lower bound, then its rate cannot be improved uniformly over the model without imposing additional assumptions; conversely, a gap between a lower bound and the performance of an available estimator suggests that either a better estimator or a sharper analysis may be possible.

Second, a minimax lower bound describes the intrinsic difficulty of the statistical problem independently of any particular estimation procedure. In our setting, it clarifies the distinct roles of the sample size $n$ and the trial parameter $t$. In particular, $n$ determines how accurately the recoverable features of $p$ can be estimated; $t$ determines which features are recoverable in the first place.

Formally, our lower bound result can be stated as follows.

\begin{theorem} \label{thm:fundamental_lower_bound} Fix $s>0$, $\delta\in(0,1/2)$, and $u\in[\delta,1-\delta]$. Suppose that $L,p_{\max}>1$. Then, for all sufficiently large $n$, the minimax rate is lower bounded as
\[
    \cR_{n,t}
    \gtrsim
    \begin{cases}
        t^{-s},
        & t\leq\log n, \\[0.5em]
        \Big[t\left\{
            1+\log\left(\frac{n}{t^{s+1/2}}\right)
        \right\}\Big]^{-s/2},
        & \log n<t<n^{\frac{2}{2s+1}}, \\[0.5em]
        n^{-\frac{s}{2s+1}},
        & t\geq n^{\frac{2}{2s+1}}.
    \end{cases}
\]
\end{theorem}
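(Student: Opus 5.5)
We argue via Le Cam's two-point method. We take a base density $p_0$ in the interior of $\Ht$, for instance the uniform density, which has norm $1<\min(L,p_{\max})$ and so leaves slack. The alternative is $p_1=p_0+\epsilon h$, where $h$ is a smooth bump-like perturbation with $\int h=0$, $h(u)\asymp 1$, and $\epsilon\norm{h}_{C^s}$ small enough that $p_1\in\Ht$. Then
\[
    \cR_{n,t}\gtrsim \epsilon\abs{h(u)}\Bigl\{1-\mathrm{TV}\bigl(P_0^{\otimes n},P_1^{\otimes n}\bigr)\Bigr\},
\]
and we control the total variation through $\chi^2$ or Hellinger distance, using $1+\chi^2(P_0^{\otimes n},P_1^{\otimes n})=\{1+\chi^2(P_0,P_1)\}^n$.

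The key structural fact is that the law of $X$ is $P_p(x)=\binom{t}{x}\int q^x(1-q)^{t-x}p(q)\dd q$. This depends on $p$ only through its moments of order at most $t$, that is, through the projection of $p$ onto $\Pi_t$. Moreover $P_0$ is uniform on $\{0,\ldots,t\}$, so
\[
    \chi^2(P_0,P_1)=\epsilon^2 (t+1)\sum_x \Bigl(\binom{t}{x}\int q^x(1-q)^{t-x}h(q)\dd q\Bigr)^2 .
\]
We expand $h$ in shifted Legendre polynomials $\tilde P_k$. The binomial kernel maps $\tilde P_k$ to a multiple $\lambda_{k,t}$ of the discrete Hahn polynomial of degree $k$, which vanishes for $k>t$. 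With $t_{(k)}=t(t-1)\cdots(t-k+1)$ and $t^{(k)}=t(t+1)\cdots(t+k-1)$, one expects $\lambda_{k,t}^2\approx t_{(k)}/t^{(k)}\approx e^{-k^2/t}$ for $k\ll t$. We will verify this closed form and obtain
\[
    \chi^2(P_0,P_1)\asymp\epsilon^2\sum_{k\le t} c_k^2\lambda_{k,t}^2,
\]
where $c_k$ are the Legendre coefficients of $h$.

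The three regimes then follow from three choices of $h$.

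\textbf{Small $t\le\log n$.} We take $h$ orthogonal to $\Pi_t$. Then $P_0=P_1$ exactly and the risk is at least $\epsilon$. We need a function with $h\perp\Pi_t$, $h(u)\asymp1$ and $\norm{h}_{C^s}\lesssim t^{s}$. A natural candidate is $h=g-\mathrm{Proj}_{\Pi_t}g$ for a bump $g$ of width $1/t$ centred at $u$, but it is cleaner to take $h=\sum_{k>t}$ of a smooth spectral window: a reproducing-kernel-type sum of Legendre terms at $u$ over $t<k\le2t$ with smooth weights. Then $h(u)\asymp 1$ after normalisation, and Bernstein-type inequalities give $\norm{h}_{C^s}\lesssim t^s$ using interior Legendre bounds, since $u$ is bounded away from $0$ and $1$. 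Scaling gives $\epsilon\asymp t^{-s}$.

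\textbf{Large $t\ge n^{2/(2s+1)}$.} We use the classical bump $h=\epsilon\, K((\cdot-u)/b)$ with $\epsilon=b^s$. Bounding $\chi^2$ by the $\chi^2$ distance between the mixing densities themselves (data processing) gives $\chi^2\lesssim b^{2s+1}$. Setting $b=n^{-1/(2s+1)}$ yields $n^{-s/(2s+1)}$, regardless of $t$.

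\textbf{Intermediate regime.} We use the spectral-window perturbation at degrees $k\asymp m$ with $t\ge m$, amplitude $\epsilon\asymp m^{-s}$, and normalisation $\sum c_k^2\asymp m$, since a degree-$m$ kernel has $L_2$ norm squared of order $m$. This gives
\[
    n\,\chi^2\lesssim n\,m^{-2s}\, m\, e^{-cm^2/t}.
\]
We choose $m^2\asymp t\log(n/t^{s+1/2})$ up to constants so that this is $O(1)$, and absorb the polynomial factor using $m\ge\sqrt t$. This gives the rate $\epsilon\asymp[t\{1+\log(n/t^{s+1/2})\}]^{-s/2}$. At the boundaries, $t\approx\log n$ gives $m\approx t$ and matches the first regime, while $t\approx n^{2/(2s+1)}$ gives $m^2\asymp t$ and matches the third.

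\textbf{Main obstacle.} The hardest step is the spectral analysis in the intermediate regime. We need a sharp Gaussian-type decay bound $\lambda_{k,t}^2\le e^{-ck^2/t}$ uniformly for $k\le t$. We also need a perturbation concentrated on degrees $k\asymp m$ that simultaneously has $h(u)\gtrsim m^{0}$-normalised value, $C^s$ norm $\lesssim m^s$, and stays nonnegative when added to $p_0$. The derivative bounds for a smoothly windowed Legendre kernel at an interior point are the delicate part. We will try to derive them from the Christoffel–Darboux formula together with Markov–Bernstein inequalities on $[\delta/2,1-\delta/2]$.
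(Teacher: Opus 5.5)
Your proposal has a genuine gap at the step you flag as delicate, and the trouble is at the endpoints of $[0,1]$, not at $u$. The H\"older norm is a supremum over all $q\in[0,1]$, and at $q\in\{0,1\}$ one has $\abs{\phi_k^{(j)}}\asymp k^{2j+1/2}$. The fact that $u$ is interior only controls $h$ for $q$ near $u$. None of the tools you list reaches the endpoints:
\begin{itemize}
\item Bernstein's inequality on $[\delta/2,1-\delta/2]$ says nothing about $[0,\delta/2]$.
\item Markov's inequality on $[0,1]$ gives $\norm{h^{(j)}}_\infty\lesssim m^{2j}$ rather than $m^j$.
\item Christoffel--Darboux with one summation by parts against the window only yields $\abs{\partial_q^j\sum_k w(k/m)\phi_k(u)\phi_k(q)}\lesssim m^{2j+1/2}$ near $0$ and $1$. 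That is $m^{2j-1/2}$ after dividing by the normalisation $\asymp m$, which already fails for $j=1$.
\end{itemize}
There are two ways to close this.
\begin{itemize}
\item Prove genuine off-diagonal localisation of smoothly windowed Legendre kernels, i.e.\ decay $O(m^{-M})$ on $[0,\delta/2]\cup[1-\delta/2,1]$ for a sufficiently smooth window. Markov's inequality on that short interval then handles the derivatives. This is a substantial input.
\item Use the paper's device: multiply by $w_R(q)=\{q(1-q)/u(1-u)\}^R$ with $R\geq r+2$. Its vanishing at the endpoints cancels the weight in Szeg\H{o}'s bound $(1-x^2)^{\ell/2+1/4}\abs{P_k^{(\ell)}(x)}\lesssim k^{\ell-1/2}$, giving $\norm{(w_R\phi_k)^{(j)}}_\infty\lesssim k^j$ even for sharp blocks.
\end{itemize}
Since $w_R$ moves Legendre degrees by up to $2R$, your blocks must then sit above $t+2R$ in the small-$t$ case and, for instance, above $m+2R$ in the intermediate case. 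Monotonicity of $\lambda_{t,k}$ in $k$ then gives $\ip{h}{\cBD_t h}_{L_2}\le\lambda_{t,m}\norm{h}_{L_2}^2\lesssim\lambda_{t,m}/m$. This is exactly the paper's heterogeneous construction specialised to $t_i\equiv t$. Relatedly, give the small-$t$ window width at least a fixed $W_0$. For bounded $t$ your window $t<k\le 2t$ contains very few degrees, and $\sum_k\phi_k(u)^2$ can vanish at interior points: for $t=1$ the window is $\{2\}$, and $\phi_2(u)=0$ at $u=(1\pm3^{-1/2})/2$.

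Second, your displayed intermediate bound $n\chi^2\lesssim n\,m^{-2s}\,m\,e^{-cm^2/t}$ is off by a factor $m^2$. If $h(u)=1$, then $\sum_k c_k^2\asymp 1/m$. If instead $h$ is the unnormalised kernel, then $h(u)\asymp m$ and the admissible amplitude is $m^{-s-1}$. Either way $n\chi^2\lesssim n\,m^{-2s-1}e^{-cm^2/t}$, and for $m\ge\sqrt t$ this is at most $(n/t^{s+1/2})e^{-cm^2/t}$. That is precisely what makes the choice $m^2\asymp t\ell_{n,t}$ work. With your exponent you would need $m^2\gtrsim t(\ell_{n,t}+\log t)$, and you would lose a logarithmic factor near $t\approx n^{2/(2s+1)}$.

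Once these two points are repaired, your route is a valid alternative to the paper's. The paper weights by $\lambda_{t,k}^{-1}\phi_k(u)$, which is optimal before tapering. That choice forces the eigenvalue-comparability argument of Lemma~\ref{lem:operator_bound}, hence the restriction $m\le\rho t$. The paper then extracts all three regimes from $\min_m\{m^{-s}+\sqrt{S_m(u)/n}\}$ via $S_m(u)\gtrsim m e^{cm^2/t}$. Your unweighted window loses only constants in the exponent. Your data-processing bound $\chi^2(P_1,P_0)\le\chi^2(p_1,p_0)\lesssim b^{2s+1}$ for the large-$t$ regime is correct and more elementary than the paper's derivation of $n^{-s/(2s+1)}$.
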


\begin{figure}
    \centering\includegraphics[width=0.7\linewidth]{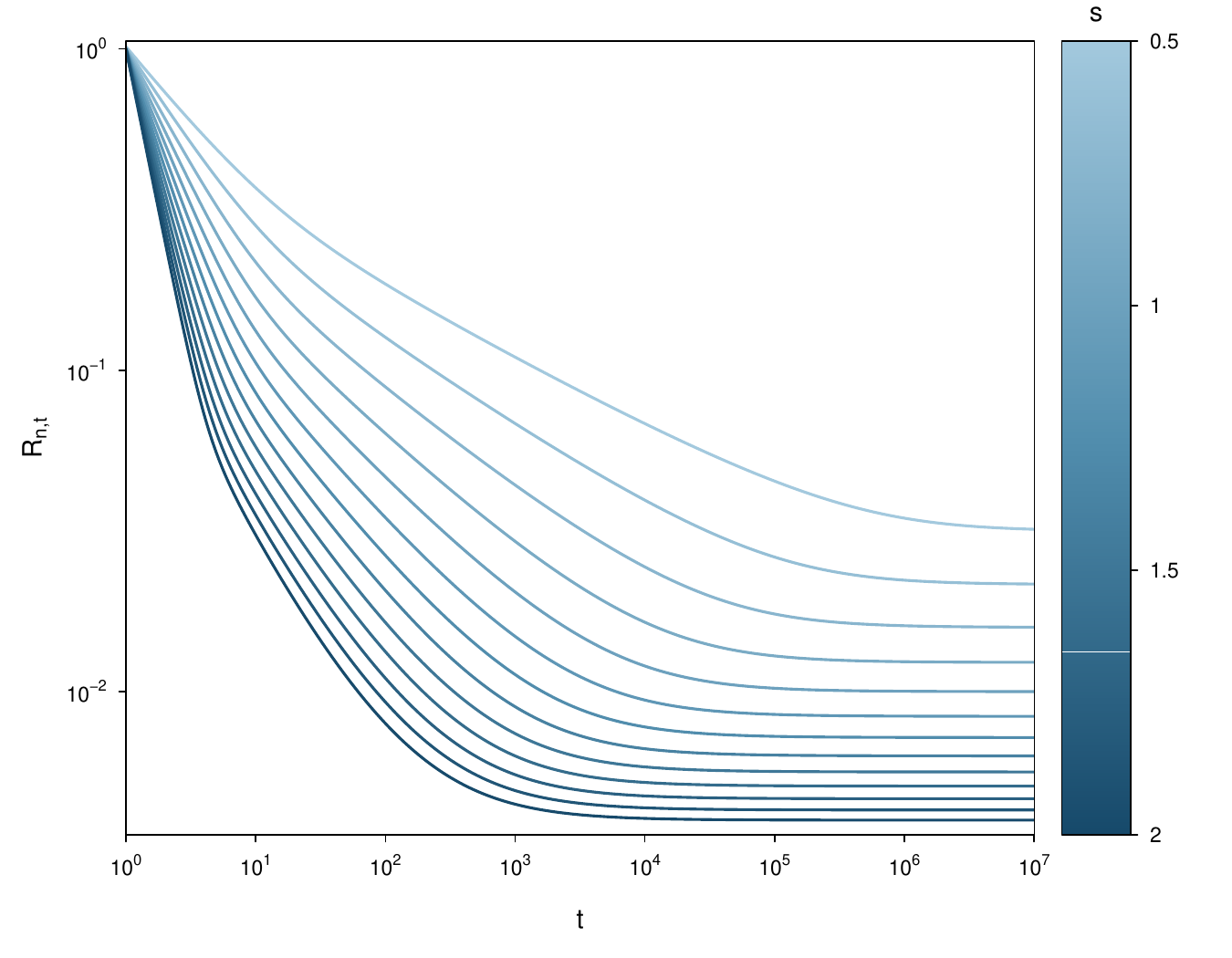}
    \caption{Smoothed illustration of the minimax rates as the trial parameter $t$ increases, with $n=10^6$ and different smoothness levels $s$. The curves decrease with $t$, but level off at order $n^{-s/(2s+1)}$ once $t$ reaches order $n^{2/(2s+1)}$. Higher smoothness gives a lower plateau reached with fewer trials.}
    \label{fig:minimax_rates}
\end{figure}
As can be seen, Theorem \ref{thm:fundamental_lower_bound} reveals three qualitatively different limitations. 

\begin{enumerate}
    \item \textbf{Small $t$ regime}: When $t \leq \log n$, the small number of trials is the primary bottleneck. In fact, there exist distinct smooth mixing densities that induce exactly the same distribution of the observed binomial sample while differing at $u$ by order $t^{-s}$. Thus, this error cannot be eliminated even with arbitrarily large sample size.
    \item \textbf{Intermediate $t$ regime}: When $\log n < t < n^{2/(2s+1)}$, some differences in the mixing density leave only a small imprint on the observed binomial distribution. A larger sample can detect increasingly smaller differences, so the achievable ``resolution'' now depends jointly on $t$ and $n$.
    \item \textbf{Large $t$ regime}: Finally, when $t \geq n^{2/(2s+1)}$, binomial sampling is sufficiently accurate for estimating an $s$-smooth density. Nevertheless, even direct observation of $Q_1,\ldots,Q_n$ could not yield a rate faster than $n^{-s/(2s+1)}$, which is the usual $1$-dimensional nonparametric density estimation rate.
\end{enumerate}

\begin{remark}
Existing minimax theory for binomial mixtures has primarily focused on estimation of the mixing distribution under Wasserstein-$1$ loss and weak structural assumptions \citep{tian2017learning,vinayak2019maximum}. This is a distinct problem from the one studied here; our target is the pointwise density value $p(u)$, our parameter space imposes $s$-H\"older smoothness, and our loss is the pointwise absolute error. Accordingly, we emphasize that their results should not be viewed as special cases of ours, nor vice versa.
\end{remark}

\begin{remark}
Nonetheless, it is worth comparing our intermediate $t$ rate with $(t\log n)^{-s/2}$, which closely resembles the minimax rate that \cite{vinayak2019maximum} obtain in their intermediate regime, $(t\log n)^{-1/2}$, but with smoothness incorporated. Note that
\begin{equation}\label{eq:lnt}
\frac{(t\ell_{n,t})^{-s/2}}{(t\log n)^{-s/2}}
=
\left(\frac{\log n}{\ell_{n,t}}\right)^{s/2}, \qquad \ell_{n,t} \equiv 1+\log\!\left(\frac{n}{t^{s+1/2}}\right).
\end{equation} 
In our intermediate regime, we have $1<\ell_{n,t}\leq 1+\log n$. Hence, if $\ell_{n,t}\gtrsim\log n$, then $\ell_{n,t}\asymp\log n$. 
In this case, therefore, the minimax rate is $(t\log n)^{-s/2}$ up to constants. 

If instead $\ell_{n,t}=o(\log n)$, the minimax rate is slower than $(t\log n)^{-s/2}$ by the diverging factor $(\log n/\ell_{n,t})^{s/2}$. In particular, when $t\asymp n^{2/(2s+1)}$ with $t<n^{2/(2s+1)}$, we have $\ell_{n,t}\asymp1$, and hence
\[
    (t\ell_{n,t})^{-s/2}
    \asymp t^{-s/2}
    \asymp n^{-s/(2s+1)}.
\]
Thus the intermediate rate is already of the same order as the
large $t$ rate in this constant-factor neighborhood of the
transition, whereas $(t\log n)^{-s/2}$ is too small by a factor
of order $(\log n)^{s/2}$.

\end{remark}

We derive the lower bound using Le Cam's two-point method \citep[Theorem 2.2]{tsybakov2008introduction}. The key idea is to construct two densities $p_+,p_-\in\Ht$ that are well separated at the target point $u$, but induce similar distributions for the observed binomial mixture sample. Let $q_+$ and $q_-$ denote the corresponding probability mass functions of a single observation $X$. If $|p_+(u)-p_-(u)|=2\Delta$ and the product distributions $q_+^{\otimes n}$ and $q_-^{\otimes n}$ are sufficiently close, then no estimator can reliably distinguish between $p_+$ and $p_-$. Subsequently, Le Cam's lemma gives a minimax lower bound of order $\Delta$.

We measure the distance between the observable laws using the $\chi^2$ divergence. Recall that the tensorization inequality \citep[Section 2.4]{tsybakov2008introduction} gives
\[
    \chi^2\left(q_+^{\otimes n},q_-^{\otimes n}\right)
    =
    \left\{1+\chi^2(q_+,q_-)\right\}^n-1
    \leq
    \exp\left\{n\chi^2(q_+,q_-)\right\}-1.
\]
Therefore, it is enough to construct $p_+$ and $p_-$ such that their pointwise separation is large, while $n\chi^2(q_+,q_-)$ remains bounded.

The argument has two parts. First, we construct two mixing densities whose difference involves only polynomial degrees of at most $t$ and is therefore, in principle, visible from the binomial observations. We choose a uniform reference density and perturb in a way that its effect on the observed probabilities is small enough that the two distributions remain difficult to distinguish from the observed sample. This yields what we call the ``intermediate $t$ lower bound,'' which depends jointly on $n$ and $t$.

Second, we construct two mixing densities whose difference is orthogonal to every polynomial of degree at most $t$. Since each binomial likelihood function is itself a polynomial of degree at most $t$, the two densities induce exactly the same distribution of the observed data. This yields an additional ``small $t$ lower bound'' that persists even with arbitrarily large sample size.

We now turn to the constructions used to prove Theorem \ref{thm:fundamental_lower_bound}. Section \ref{sec:shared_framework} introduces the perturbation framework shared by both lower bounds. We then apply this framework to the intermediate $t$ lower bound in Section \ref{sec:statistical_lower_bound}, where the two alternatives induce different but statistically close observed distributions, and to the small $t$ lower bound in Section \ref{sec:trial_only_lower_bound}, where the two alternatives induce exactly the same observed distribution. We conclude this section by presenting the combined lower bounds.

\subsection{Smooth perturbation framework}\label{sec:shared_framework}
We begin by describing a framework for constructing perturbed densities for the intermediate and small $t$ lower bounds. To reiterate, the main question is the following: how much can two mixing densities differ at the point $u$, while still producing nearly indistinguishable distributions of the observed binomials?

A common method for attaining lower bounds for pointwise density estimation problems is to perturb a reference density to obtain a null and an alternative that are separated at the target point, while inducing nearby distributions of the observed data; see,
e.g., \cite{fan1991optimal, roueff2005nonparametric, rebafka2015nonparametric,  belomestny2020nonparametric} for examples in related measurement error problems. The reference density and the perturbation are chosen so that the resulting null and alternative are valid densities that belong to the parameter class of interest.

We choose the uniform density $p_0(q) \equiv 1$ as the reference density, which is convenient in our setting. Under the assumption $L,p_{\max}>1$, sufficiently small perturbations of $p_0$ remain in $\Ht$. More importantly, it induces the uniform distribution on $\{0,\ldots,t\}$; to see this, let
\[
    b_{t,x}(q) \equiv \binom{t}{x}q^x(1-q)^{t-x}, \qquad x=0,\ldots,t,
\]
denote the Bernstein basis function and, for an integrable function $f$, define the binomial mixture operator $\cB_t$ as
\begin{equation}\label{eq:binomial_mixture}
    (\cB_tf)(x) \equiv \int_0^1b_{t,x}(q)f(q)\dd q, \qquad x=0,\ldots,t.    
\end{equation}
Then, the probability mass function induced by $p_0$ is
\begin{equation}\label{eq:discrete_uniform}
    q_0(x) \equiv (\cB_tp_0)(x) = \int_0^1b_{t,x}(q)\dd q = \frac{1}{t+1}.
\end{equation}

This considerably simplifies the $\chi^2$ divergence; if we thus choose our candidate densities to be of the form
\[
    p_{\pm,\Delta}(q) \equiv 1\pm\Delta g(q),
\]
where $g$ is the ``shape'' of a perturbation that satisfies
\begin{equation}\label{eq:g_constraints}
    \int_0^1g(q)\dd q=0, \qquad g(u)=1,
\end{equation}
and $\Delta>0$ denotes its ``amplitude'', then \eqref{eq:discrete_uniform} implies that the corresponding observed probability mass functions are
\[
    q_{\pm,\Delta}(x) \equiv (\cB_tp_{\pm,\Delta})(x) = \frac{1}{t+1} \pm \Delta(\cB_tg)(x).
\]
Therefore, if
\[
    q_{-,\Delta}(x) \geq c_0q_0(x)
    = \frac{c_0}{t+1},
    \qquad x=0,\ldots,t,
\]
for some fixed constant $c_0>0$, then the $\chi^2$ divergence between these observed laws satisfies
\[
    \chi^2(q_{+,\Delta},q_{-,\Delta})
    =4\Delta^2\sum_{x=0}^t\frac{(\cB_tg)(x)^2}{q_{-,\Delta}(x)}
    \leq \frac{4\Delta^2}{c_0}(t+1)
    \sum_{x=0}^t(\cB_tg)(x)^2.
\]
The first condition in \eqref{eq:g_constraints} ensures that $p_{\pm,\Delta}$ integrate to one, and the second condition yields $p_{+,\Delta}(u)-p_{-,\Delta}(u)=2\Delta$. 

Importantly, the quantity
\begin{equation}\label{eq:chi_squared_bound_squared}
    (t+1)\sum_{x=0}^t(\cB_tg)(x)^2,
\end{equation}
can be naturally expressed through the \emph{Bernstein--Durrmeyer operator}
\citep{durrmeyer1967formule,Derriennic1981,adell1995bernstein, CooperWaldron2000}. It is formally defined as
\begin{equation}\label{eq:bernstein_durrmeyer}
    (\cBD_tf)(q) \equiv (t+1)\sum_{x=0}^t b_{t,x}(q)(\cB_tf)(x).
\end{equation}
That is, $\cBD_t$ is obtained by first applying the binomial mixture operator \eqref{eq:binomial_mixture} and then mapping the resulting vector back to a polynomial on $[0,1]$. With $\cBD_t$, \eqref{eq:chi_squared_bound_squared} can simply be written in the quadratic form
\[
    (t+1)\sum_{x=0}^t(\cB_tg)(x)^2 = \ip{g}{\cBD_tg}_{L_2}.
\]
Hence, for a fixed pointwise separation, we would like to choose $g$ that obeys the constraints in \eqref{eq:g_constraints} and makes $\ip{g}{\cBD_tg}_{L_2}$ small. Once such a perturbation has been constructed, we can select the amplitude $\Delta$ to be as large as possible while keeping the perturbed densities in $\Ht$ and the corresponding product distributions difficult to distinguish.

\paragraph{The optimal unit perturbation.}
To choose an appropriate $g$, we begin by considering the optimal perturbation normalized so that $g(u)=1$, temporarily ignoring the zero-integral and smoothness constraints. Consider the Legendre polynomial of degree $k \geq 0$ on $[-1,1]$,
\[
    P_k(v)
    \equiv
    \frac{1}{2^kk!}
    \frac{\dd^k}{\dd v^k}(v^2-1)^k,
    \qquad -1\leq v\leq1,
\]
and define its normalized and shifted version on $[0,1]$ as
\[
    \phi_k(q) \equiv \sqrt{2k+1}P_k(2q-1).
\]
Our first lemma, which is Theorem III.3 of \cite{Derriennic1981}, shows that $\phi_k$ forms an eigenbasis of the Bernstein--Durrmeyer operator~\eqref{eq:bernstein_durrmeyer}.

\begin{lemma}[Theorem III.3 of \cite{Derriennic1981}]
\label{lem:bernstein_durrmeyer_eigen}
For $t\in\mathbb N$ and $k=0,\ldots,t$,
\[
    \cBD_t\phi_k = \lambda_{t,k}\phi_k, \qquad
    \lambda_{t,k}
    \equiv \prod_{j=0}^{k-1}\frac{t-j}{t+j+2},
    \qquad \lambda_{t,0} \equiv 1.
\]
\end{lemma}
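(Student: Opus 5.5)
The plan is to combine two facts: $\cBD_t$ is self-adjoint on $L_2([0,1])$, and it maps each $\Pi_j$ into itself. Together these force the orthogonal polynomials $\phi_k$ to be eigenfunctions. The eigenvalue is then read off from the action of $\cBD_t$ on leading coefficients.

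\textbf{Self-adjointness.} Unwinding \eqref{eq:binomial_mixture} and \eqref{eq:bernstein_durrmeyer} gives
\[
(\cBD_tf)(q)=\int_0^1 K_t(q,q')f(q')\dd q', \qquad K_t(q,q')\equiv(t+1)\sum_{x=0}^t b_{t,x}(q)b_{t,x}(q').
\]
The kernel $K_t$ is symmetric, so $\ip{\cBD_tf}{h}_{L_2}=\ip{f}{\cBD_th}_{L_2}$ for all integrable $f,h$.

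\textbf{Action on monomials.} Fix $0\le j\le t$. Using the Beta integral,
\[
(\cB_t q^j)(x)=\binom{t}{x}\frac{(x+j)!\,(t-x)!}{(t+j+1)!}=\frac{t!}{(t+j+1)!}\,(x+1)(x+2)\cdots(x+j).
\]
This is a polynomial in $x$ of degree $j$ with leading coefficient $t!/(t+j+1)!$. Expand it in the falling-factorial basis $x^{\underline{i}}=x(x-1)\cdots(x-i+1)$ for $i\le j$; the coefficient of $x^{\underline{j}}$ is still $t!/(t+j+1)!$. Now apply the classical Bernstein identity
\[
\sum_{x=0}^t b_{t,x}(q)\,x^{\underline{i}}=t^{\underline{i}}\,q^i .
\]
It follows that $\cBD_t q^j$ is a polynomial of degree at most $j$, with coefficient of $q^j$ equal to
\[
(t+1)\frac{t!}{(t+j+1)!}\cdot\frac{t!}{(t-j)!}=\frac{(t+1)!\,t!}{(t+j+1)!\,(t-j)!}=\prod_{i=0}^{j-1}\frac{t-i}{t+i+2}=\lambda_{t,j}.
\]
For example, $j=1$ gives $t/(t+2)$. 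In particular, $\cBD_t(\Pi_j)\subseteq\Pi_j$ for every $j\le t$.

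\textbf{Eigenvector argument.} Fix $k\le t$. The shifted Legendre polynomial $\phi_k$ has degree exactly $k$ and is $L_2$-orthogonal to $\Pi_{k-1}$. For any $h\in\Pi_{k-1}$, invariance gives $\cBD_th\in\Pi_{k-1}$, so self-adjointness yields
\[
\ip{\cBD_t\phi_k}{h}_{L_2}=\ip{\phi_k}{\cBD_th}_{L_2}=0 .
\]
Hence $\cBD_t\phi_k$ lies in $\Pi_k\cap\Pi_{k-1}^{\perp}$. This space is one-dimensional and spanned by $\phi_k$, so $\cBD_t\phi_k=c\,\phi_k$ for some scalar $c$.

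\textbf{Identifying the eigenvalue.} Write $\phi_k=a_kq^k+(\text{lower order})$ with $a_k\neq0$. By the monomial computation, the $q^k$-coefficient of $\cBD_t\phi_k$ is $\lambda_{t,k}a_k$. Comparing with $c\,a_k$ gives $c=\lambda_{t,k}$, and the case $k=0$ is trivial.

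\textbf{Main obstacle.} The only delicate step is the leading-coefficient bookkeeping in the monomial computation. There one must check that converting the rising factorial $(x+1)\cdots(x+j)$ to falling factorials leaves the top coefficient unchanged and produces only lower-degree terms. This holds because both bases are monic polynomials in $x$ of the indicated degrees.
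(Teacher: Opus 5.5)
Your proof is correct and complete. The paper gives no argument of its own for this lemma: its appendix ``proof'' is only a pointer to \cite{Derriennic1981}. So the comparison is between a citation and your self-contained derivation.

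Your route has three steps. First, symmetry of the kernel $K_t$ gives self-adjointness. Second, the Beta-integral and binomial factorial-moment computation shows both that $\cBD_t(\Pi_j)\subseteq\Pi_j$ and that the $q^j$-coefficient of $\cBD_t q^j$ equals $\frac{(t+1)!\,t!}{(t+j+1)!\,(t-j)!}=\lambda_{t,j}$. Third, $\Pi_k\cap\Pi_{k-1}^{\perp}=\operatorname{span}\{\phi_k\}$ pins down $\phi_k$ as an eigenvector, and leading-coefficient matching identifies its eigenvalue. This is a standard elementary argument, and it makes the paper self-contained at the cost of about half a page; the citation buys only brevity.

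Your argument also makes explicit two facts the paper later uses without separate justification. One is the self-adjointness of $\cBD_t$, implicit in writing $\cBD_t=\cA_t\cA_t^*$ in the proof of Proposition~\ref{prop:singular_system}. The other is the closed factorial form of $\lambda_{t,k}$.

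As a small bonus, the same invariance argument covers $k>t$ with no extra work. There $\cBD_t\phi_k\in\Pi_t\subseteq\Pi_{k-1}$, and since $\cBD_t(\Pi_{k-1})\subseteq\Pi_{k-1}$, self-adjointness makes $\cBD_t\phi_k$ orthogonal to $\Pi_{k-1}$, so $\cBD_t\phi_k=0$. This matches the vanishing of the product formula for $\lambda_{t,k}$ when $k>t$, which the paper relies on in Proposition~\ref{prop:aggregate_eigendecomposition}.
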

Now, fix an integer $m\leq t$ and define the quantity
\[
    S_m(u) \equiv \sum_{k=0}^m\lambda_{t,k}^{-1}\phi_k(u)^2.
\]
If $g$ is a polynomial of degree at most $m$ of the form
\begin{equation}\label{eq:g}
    g(q) = \sum_{k=0}^m a_k\phi_k(q), \qquad g(u)=1,
\end{equation}
then Lemma \ref{lem:bernstein_durrmeyer_eigen} and the Cauchy--Schwarz inequality imply
\[
    \ip{g}{\cBD_tg}_{L_2}
    = \sum_{k=0}^m\lambda_{t,k}a_k^2
    \geq S_m(u)^{-1},
\]
with equality holding when $a_k \propto \lambda_{t,k}^{-1}\phi_k(u)$. It follows that
\[
    g^\star(q) \equiv \frac{\sum_{k=0}^m\lambda_{t,k}^{-1}\phi_k(u)\phi_k(q)}{S_m(u)},
\]
is the exact solution to
\[
    \min_{\substack{g\in\Pi_m\\g(u)=1}}\ip{g}{\cBD_tg}_{L_2},
\]
and the minimum value is $S_m(u)^{-1}$. Thus, $g^\star(q)$ is the optimal perturbation subject to $g(u)=1$, and $S_m(u)^{-1}$ is the smallest observable quadratic form required to create a unit perturbation at $u$ using polynomials of degree at most $m$.

Note that $g^\star$ need not integrate to zero, and that its derivatives can grow rapidly near the endpoints.\footnote{The derivatives of high-degree Legendre polynomials can grow rapidly near the endpoints; see equations (4.21.7) and (4.1.1) of \cite{szego75}.} Hence, a natural next step is to modify this optimal shape so that, after multiplication by a suitable amplitude, the resulting perturbed densities belong to $\Ht$, while keeping its effect on the observed distribution small.

\paragraph{Multiplication by a polynomial factor.}
To deal with the remaining constraints, we first introduce the polynomial factor
\[
    w_R(q) \equiv \left\{\frac{q(1-q)}{u(1-u)}\right\}^R,
    \qquad R\geq r+2.
\]
The denominator ensures that $w_R(u)=1$, while the numerator vanishes to order $R$ at both endpoints. Hence, multiplication by $w_R$ can help control the growth of the derivatives of high-degree Legendre polynomials near $0$ and $1$.

$w_R$ is also designed to help enforce the zero-integral constraint. Since $w_R$ is a polynomial of degree $2R$, for every $k>2R$, the orthogonality of the Legendre polynomials yields
\[
    \int_0^1 w_R(q)\phi_k(q)\dd q=0.
\]
Thus, multiplying a combination of Legendre polynomials of degrees greater than $2R$ by $w_R$ preserves its value at $u$ while producing a function that integrates to zero. 

Multiplying $g^\star$ by $w_R$, however, requires care for two reasons. First, $g^\star$ contains the terms of degrees $0,\ldots,2R$, for which the above orthogonality argument does not guarantee a zero integral. Second, we must also control the eigenvalues at nearby degrees, since multiplication by $w_R$ can shift each Legendre component by at most $2R$ degrees in either direction. 



For the intermediate $t$ lower bound, we obtain a zero integral by dropping the terms of degrees $0,\ldots,2R$, while keeping the inverse eigenvalue weighting over the remaining degrees. We also restrict $m$ so that the eigenvalues remain comparable over the degree shifts introduced by $w_R$. As shown in Section~\ref{sec:statistical_lower_bound}, removing these finitely many terms does not change the order of $S_m(u)$ over an admissible range of $m$, and multiplication by $w_R$ increases the quadratic form by at most a constant factor.

For the small $t$ lower bound, placing all terms above degree $t+2R$ handles both considerations directly. The perturbation then integrates to zero, and every degree resulting from multiplication by $w_R$ remains strictly larger than $t$. Since $\lambda_{t,k}=0$ for $k>t$, the quadratic form is exactly zero, and inverse eigenvalue weighting is unnecessary.

Having determined which degrees to use in each construction, the final remaining question is whether the normalization at $u$ remains well behaved. If only a single or a few degrees are retained, their values at $u$ may vanish or be arbitrarily close to zero; 
we therefore leverage sufficiently long consecutive blocks of the Legendre terms. For integers $a\geq0$ and $w\geq1$, define
\[
    \cJ(a,w)
    \equiv
    \{a+1,\ldots,a+w\}.
\]
The following lemma shows that the squared values of the Legendre polynomials cannot all be small over a sufficiently long consecutive block at an interior point.

\begin{lemma}
\label{lem:legendre_block}
Suppose $\delta\in(0,1/2)$. Then there exists an integer $W_0=W_{0,\delta}$ such that, uniformly over $u\in[\delta,1-\delta]$, $a\in\mathbb N_0$, and $w\geq W_0$,
\[
    \sum_{k\in\cJ(a,w)}\phi_k(u)^2 \asymp w.
\]
\end{lemma}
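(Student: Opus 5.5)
We plan to prove the two inequalities separately, using the classical asymptotics of Legendre polynomials in the interior of $[-1,1]$. Write $v=2u-1=\cos\theta$. Since $u\in[\delta,1-\delta]$, we have $\theta\in[\theta_\delta,\pi-\theta_\delta]$ with $\theta_\delta=\arccos(1-2\delta)>0$, so $\sin\theta\ge\sin\theta_\delta>0$.

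\textbf{Upper bound.} We invoke the Bernstein-type inequality for Legendre polynomials (Szeg\H{o}, Theorem 7.3.3):
\[
\sqrt{\sin\theta}\,\abs{P_k(\cos\theta)}\le \sqrt{2/(\pi k)}, \qquad k\ge1.
\]
This gives $\phi_k(u)^2=(2k+1)P_k(v)^2\le (2k+1)\cdot 2/(\pi k\sin\theta_\delta)\le C_\delta$ for every $k\ge1$, and trivially $\phi_0(u)^2=1$. Summing over the $w$ indices in $\cJ(a,w)$ yields $\sum_{k\in\cJ(a,w)}\phi_k(u)^2\le C_\delta w$ for every $a$ and $w$.

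\textbf{Lower bound.} We use the Laplace–Heine asymptotic, which holds uniformly for $\theta\in[\theta_\delta,\pi-\theta_\delta]$ (Szeg\H{o}, (8.21.14) or Theorem 8.21.13):
\[
P_k(\cos\theta)=\sqrt{\frac{2}{\pi k\sin\theta}}\Bigl\{\cos\bigl((k+\tfrac12)\theta-\tfrac{\pi}{4}\bigr)+O_\delta(k^{-1})\Bigr\}.
\]
Hence $\phi_k(u)^2=\frac{4}{\pi\sin\theta}\cos^2(\psi_k)+O_\delta(k^{-1})$ with $\psi_k=(k+\tfrac12)\theta-\tfrac{\pi}{4}$. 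Using $\cos^2\psi=\tfrac12\{1+\cos 2\psi\}$, the sum over the block becomes
\[
\frac{2}{\pi\sin\theta}\Bigl\{w+\sum_{k\in\cJ(a,w)}\cos(2\psi_k)\Bigr\}+\sum_{k\in\cJ(a,w)}O(k^{-1}).
\]
The oscillating part is a geometric-type sum. Its absolute value is at most $1/\abs{\sin\theta}\le 1/\sin\theta_\delta$, uniformly in $a$, because the frequency $2\theta$ stays bounded away from $0$ and $2\pi$. The main term is therefore at least $\frac{2}{\pi}\bigl(w-1/\sin\theta_\delta\bigr)$, using $\sin\theta\le1$.

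\textbf{Main obstacle: the remainder.} The error $\sum_{k\in\cJ(a,w)}O(k^{-1})$ is only $O(\log w)$ when $a$ is small, and the asymptotic is unreliable for the first few $k$. We handle this by choosing $K_\delta$ so that the $O(k^{-1})$ term is at most $c=1/(\pi\sin\theta)$-scale, say at most $\tfrac{1}{2}\cdot\tfrac{2}{\pi\sin\theta}$, for all $k\ge K_\delta$.
\begin{itemize}
\item For indices $k>K_\delta$, the error per term is below half the mean level. The sum over those indices is then $\ge\tfrac{1}{\pi}\bigl(w'-2/\sin\theta_\delta\bigr)$, where $w'\ge w-K_\delta$ is the number of such indices in the block.
\item Indices $k\le K_\delta$ contribute nonnegative terms, so dropping them only lowers the sum.
\end{itemize}
This gives a lower bound of at least $\tfrac{1}{\pi}\bigl(w-K_\delta-2/\sin\theta_\delta\bigr)$. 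Choosing $W_0\ge 2(K_\delta+2/\sin\theta_\delta)$ makes this at least $w/(2\pi)$ for all $w\ge W_0$, uniformly in $a$ and $u$.

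If a fully elementary route were preferred, an alternative is to use the three-term recurrence. It shows that two consecutive values $P_k(v)$ and $P_{k+1}(v)$ cannot both be small relative to $k^{-1/2}$ in the interior, via the Christoffel–Darboux identity or a Sonine-type monotone energy function. We would nonetheless try the Laplace–Heine route first.
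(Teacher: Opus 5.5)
Your proof is correct and follows essentially the same route as the paper. Both arguments use Szeg\H{o}'s interior asymptotic for $P_k(\cos\theta)$, the identity $\cos^2\psi=\tfrac12(1+\cos2\psi)$, and the geometric-sum bound $1/\sin\theta$ on the oscillating part. The differences are only technical, as follows.
\begin{itemize}
\item \emph{Upper bound.} You get it directly from Bernstein's inequality (Szeg\H{o}, Thm.~7.3.3). The paper instead derives it from the asymptotic, extending the remainder bound to small $k$ by compactness.
\item \emph{Lower bound.} You square first and discard the indices $k\le K_\delta$ by nonnegativity. The paper keeps the remainder at the level of $\phi_k(u)$, so that $\sum_k R_k(\theta)^2\lesssim\sum_k(k+1)^{-2}$ is summable, and then uses $(x+y)^2\ge x^2/2-y^2$.
\end{itemize}
Your ``main obstacle'' is also milder than you suggest: even the crude $O(1+\log w)$ remainder bound would suffice, since it is $o(w)$.
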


Accordingly, for the intermediate $t$ lower bound, we can restrict $m$ so that the block $\cJ(2R,m-2R)$ contains at least $W_0$ terms. For the small $t$ lower bound, we can adopt a block of width exactly $W_0$.
In what follows, we first apply this construction to the intermediate $t$ lower bound, and then give the corresponding construction for the small $t$ lower bound.

\subsection{Intermediate $t$ lower bound} \label{sec:statistical_lower_bound} 
We now concretely describe the perturbation for the intermediate $t$ lower bound, including the admissible range of the polynomial degrees and the choice of its amplitude. We begin by stating an assumption on the fixed constants used throughout the analysis. 

\begin{assumption}\label{assm:constants_homo} 
Let $R\geq r+2$ be the integer used in the definition of $w_R$, and let $W_0=W_{0,\delta}$ be as in Lemma \ref{lem:legendre_block}. Fix $\rho\in(0,1)$ and choose an integer $m_0\geq\max\{2W_0+2,4R\}$. Define 
\[ 
    t_0 \equiv \left\lceil \max\left\{ \frac{m_0}{\rho}, \frac{2R}{1-\rho} \right\} \right\rceil.
\]
We assume $t\geq t_0$.
\end{assumption} 

Next, we define the set of admissible polynomial degrees by 
\[ 
    \cM_t \equiv \left\{ m\in\mathbb N: m_0\leq m\leq\floor{\rho t} \right\},
\]
and, for each $m\in\cM_t$, consider
\[ 
    \cJ_{\intm} \equiv \cJ(2R,m-2R) = \{2R+1,\ldots,m\}. 
\] 
The choice of $m_0$ ensures $|\cJ_{\intm}| = m-2R \geq W_0$ so that Lemma \ref{lem:legendre_block} applies; further, the definition of $t_0$ gives $m+2R\leq t$, which guarantees that every degree appearing in the perturbation is at most $t$. 

Continuing our discussion from the previous section, we first verify that dropping the terms at $k=0,\ldots,2R$ does not change the order of $S_m(u)$.

\begin{lemma}
\label{lem:localized_block_comparison}
Suppose Assumption \ref{assm:constants_homo} holds. Then, for every $m\in\cM_t$, uniformly over $u\in[\delta,1-\delta]$,
\[
    \sum_{k\in\cJ_{\intm}}     \lambda_{t,k}^{-1}\phi_k(u)^2
    \asymp
    S_m(u).
\]
Consequently,
\[
    \min_{\substack{
        g\in\operatorname{span}
        \{\phi_{2R+1},\ldots,\phi_m\}\\
        g(u)=1
    }}
    \ip{g}{\cBD_tg}_{L_2}
    =
    \bigg\{\sum_{k\in\cJ_{\intm}}     \lambda_{t,k}^{-1}\phi_k(u)^2\bigg\}^{-1}
    \asymp
    S_m(u)^{-1}.
\]
\end{lemma}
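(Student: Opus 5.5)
The upper bound $\sum_{k\in\cJ_{\intm}}\lambda_{t,k}^{-1}\phi_k(u)^2\le S_m(u)$ is immediate, since all summands are nonnegative. For the lower bound, the plan is to show that the dropped head $\sum_{k=0}^{2R}\lambda_{t,k}^{-1}\phi_k(u)^2$ is at most a constant multiple of the retained block sum. This uses two facts.

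\emph{The dropped head is $O(1)$.} For $k\le 2R$ and $t\ge t_0$, each factor $(t-j)/(t+j+2)$ with $j<k\le 2R$ is bounded below by $(t_0-2R)/(t_0+2R+2)>0$. Hence $\lambda_{t,k}^{-1}\le C_R$. Also $|\phi_k(u)|\le\sqrt{2k+1}\le\sqrt{4R+1}$, because $|P_k|\le1$ on $[-1,1]$. So the head is bounded by a constant depending only on $R$ and $t_0$.

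\emph{The retained block is bounded below.} The eigenvalues are nonincreasing in $k$ (each factor is $\le 1$), so $\lambda_{t,k}^{-1}\ge 1$ for every $k$. Therefore
\[
    \sum_{k\in\cJ_{\intm}}\lambda_{t,k}^{-1}\phi_k(u)^2 \ge \sum_{k\in\cJ(2R,m-2R)}\phi_k(u)^2 .
\]
Since $m\ge m_0\ge 4R$ and $m_0\ge 2W_0+2$, the block width satisfies $m-2R\ge m/2\ge W_0$. Lemma~\ref{lem:legendre_block} then gives a lower bound $\gtrsim m-2R\gtrsim m_0$, uniformly in $u\in[\delta,1-\delta]$ and $m\in\cM_t$. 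In particular the block sum is bounded below by a positive constant $c$.

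Combining the two facts, $S_m(u)\le (1+C_R(4R+1)/c)\sum_{k\in\cJ_{\intm}}\lambda_{t,k}^{-1}\phi_k(u)^2$, which proves the asserted equivalence.

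For the ``consequently'' part, repeat the Cauchy--Schwarz argument preceding $g^\star$, now restricted to the span of $\phi_{2R+1},\ldots,\phi_m$. Write $g=\sum_{k\in\cJ_{\intm}}a_k\phi_k$ with $g(u)=\sum a_k\phi_k(u)=1$. Since $m\le\rho t\le t$, Lemma~\ref{lem:bernstein_durrmeyer_eigen} applies and all $\lambda_{t,k}>0$, so $\ip{g}{\cBD_tg}_{L_2}=\sum\lambda_{t,k}a_k^2$. Cauchy--Schwarz gives
\[
    1=\Big(\sum a_k\phi_k(u)\Big)^2\le\Big(\sum\lambda_{t,k}a_k^2\Big)\Big(\sum\lambda_{t,k}^{-1}\phi_k(u)^2\Big),
\]
with equality at $a_k\propto\lambda_{t,k}^{-1}\phi_k(u)$. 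This choice is admissible because the block sum is positive, so the normalization is well defined. This identifies the minimum, and the first part converts it to $S_m(u)^{-1}$.

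The only delicate point is that the lower bound on the retained sum must not degrade as $m$ varies. Using $\lambda^{-1}\ge1$ together with Lemma~\ref{lem:legendre_block} handles this uniformly in $m$, so no finer eigenvalue asymptotics are needed.
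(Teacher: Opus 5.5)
Your proof is correct and follows the paper's argument essentially step for step. The dropped terms $k\le 2R$ are bounded by a constant uniformly over $t\ge t_0$. The retained block is bounded below via $\lambda_{t,k}^{-1}\ge 1$ and Lemma~\ref{lem:legendre_block}, using $m-2R\ge W_0$. The minimization then follows from the same Cauchy--Schwarz argument restricted to $\operatorname{span}\{\phi_{2R+1},\ldots,\phi_m\}$.

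The only blemish is cosmetic: the head contains $2R+1$ summands, so the constant in your final combination should carry an extra factor $2R+1$.
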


For the proof, note that
\[
    S_m(u) - \sum_{k\in\cJ_{\intm}}
    \lambda_{t,k}^{-1}\phi_k(u)^2
    =
    \sum_{k=0}^{2R}
    \lambda_{t,k}^{-1}\phi_k(u)^2.
\]
Since the sum contains only a fixed number of terms, it is uniformly bounded over the admissible values of $t$ and $u$. On the other hand, $\lambda_{t,k}^{-1}\geq1$ and Lemma \ref{lem:legendre_block} give
\[
    \sum_{k\in\cJ_{\intm}}\lambda_{t,k}^{-1}\phi_k(u)^2
    \geq 
    \sum_{k\in\cJ_{\intm}}\phi_k(u)^2
    \gtrsim
    |\cJ_{\intm}|
    \geq
    W_0.
\]
Hence $\sum_{k\in\cJ_{\intm}}\lambda_{t,k}^{-1}\phi_k(u)^2 \asymp S_m(u)$. The minimization result then follows from the same Cauchy--Schwarz argument used to obtain $g^\star$, restricted to the degrees in $\cJ_{\intm}$.

We therefore define the perturbation for the intermediate $t$ lower bound as
\begin{equation}\label{eq:pert_stat}
    g_{\cJ_{\intm}}(q)
    \equiv
    w_R(q)
    \frac{\sum_{k\in\cJ_{\intm}}
    \lambda_{t,k}^{-1}\phi_k(u)\phi_k(q)}
         {\sum_{k\in\cJ_{\intm}}
    \lambda_{t,k}^{-1}\phi_k(u)^2},
\end{equation}
which, by construction, satisfies $g_{\cJ_{\intm}}(u)=1$ and $\int_0^1g_{\cJ_{\intm}}(q)\dd q=0$.

Our next lemma verifies that multiplication by $w_R$ preserves, up to constant factors, the quadratic-form bound obtained before the multiplication by $w_R$.

\begin{lemma}
\label{lem:operator_bound}
Suppose Assumption \ref{assm:constants_homo} holds. Then, for every $m\in\cM_t$, uniformly over $u\in[\delta,1-\delta]$, 
\[
    \ip{g_{\cJ_{\intm}}}
       {\cBD_tg_{\cJ_{\intm}}}_{L_2}
    \lesssim
    S_m(u)^{-1}.
\]
\end{lemma}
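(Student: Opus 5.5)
Write $g_{\cJ_{\intm}} = w_R h$, where
\[
    h \equiv \frac{\sum_{k\in\cJ_{\intm}}\lambda_{t,k}^{-1}\phi_k(u)\phi_k}{\sum_{k\in\cJ_{\intm}}\lambda_{t,k}^{-1}\phi_k(u)^2}
    = \sum_{k\in\cJ_{\intm}} a_k\phi_k,
    \qquad a_k \equiv \frac{\lambda_{t,k}^{-1}\phi_k(u)}{\sum_{j\in\cJ_{\intm}}\lambda_{t,j}^{-1}\phi_j(u)^2}.
\]
By Lemma \ref{lem:localized_block_comparison}, $\sum_k \lambda_{t,k}a_k^2 = \{\sum_{k\in\cJ_{\intm}}\lambda_{t,k}^{-1}\phi_k(u)^2\}^{-1}\asymp S_m(u)^{-1}$. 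The plan is to show that multiplication by $w_R$ changes this spectral quadratic form by at most a constant factor.

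First expand $g_{\cJ_{\intm}} = \sum_{j} c_j\phi_j$ in the Legendre basis. Every degree $j$ lies in $[1, m+2R]$, and $m+2R\le t$ by Assumption \ref{assm:constants_homo}. Then Lemma \ref{lem:bernstein_durrmeyer_eigen} gives
\[
    \ip{g_{\cJ_{\intm}}}{\cBD_tg_{\cJ_{\intm}}}_{L_2} = \sum_j \lambda_{t,j}c_j^2 .
\]
Since $w_R$ is a polynomial of degree $2R$, its multiplication operator is banded in the Legendre basis: $c_j = \sum_{k:|k-j|\le 2R} M_{jk}a_k$ with $M_{jk} = \ip{w_R\phi_k}{\phi_j}_{L_2}$. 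Moreover $|M_{jk}|\le \|w_R\|_\infty\le C_{\delta,R}$, since $u(1-u)\ge \delta(1-\delta)/1$ gives $\sup_q w_R(q)\le (4\delta(1-\delta))^{-R}$. Cauchy--Schwarz over at most $4R+1$ terms then yields $c_j^2\le (4R+1)C^2\sum_{|k-j|\le 2R}a_k^2$, and therefore
\[
    \sum_j\lambda_{t,j}c_j^2 \lesssim \sum_{k\in\cJ_{\intm}} a_k^2 \max_{|j-k|\le 2R,\,j\le t}\lambda_{t,j}.
\]

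The key step, which I expect to be the main obstacle, is eigenvalue comparability: one needs $\lambda_{t,j}\lesssim\lambda_{t,k}$ whenever $|j-k|\le 2R$ and $k\le m\le\rho t$. Since $\lambda_{t,j}$ is decreasing in $j$, only $j=k-d$ with $d\le 2R$ matters. In that case
\[
    \frac{\lambda_{t,k-d}}{\lambda_{t,k}} = \prod_{i=k-d}^{k-1}\frac{t+i+2}{t-i}\le\left(\frac{(1+\rho)t+2}{(1-\rho)t}\right)^{2R},
\]
because $i<k\le\rho t$. This is bounded by a constant depending only on $\rho$ and $R$, using $t\ge t_0\ge 1$. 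Here the restriction $m\le\lfloor\rho t\rfloor$ is essential: near $k\approx t$ the ratios blow up.

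Combining the steps gives $\ip{g_{\cJ_{\intm}}}{\cBD_tg_{\cJ_{\intm}}}_{L_2}\lesssim\sum_k\lambda_{t,k}a_k^2\asymp S_m(u)^{-1}$. All constants depend only on $\delta,R,\rho$, which gives uniformity over $u\in[\delta,1-\delta]$ and $m\in\cM_t$.
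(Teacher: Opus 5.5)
Your proposal is correct and follows essentially the same route as the paper. Both arguments expand multiplication by $w_R$ as a banded operator in the Legendre basis, apply Cauchy--Schwarz over at most $4R+1$ terms with $|\ip{w_R\phi_k}{\phi_j}_{L_2}|\le\norm{w_R}_\infty$, use eigenvalue comparability $\lambda_{t,j}\lesssim\lambda_{t,k}$ for $|j-k|\le 2R$ (which needs $k\le\rho t$), and finish with Lemma~\ref{lem:localized_block_comparison}. The only cosmetic differences are that you work with normalized coefficients $a_k$ rather than dividing by the squared normalizer at the end, and that you bound $\lambda_{t,k-d}/\lambda_{t,k}$ directly from the product formula instead of invoking Lemma~\ref{lem:inverse_eig_ratio}.
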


The intuition is as follows: since $w_R$ has degree $2R$, multiplying $\phi_k$ by $w_R$ can only produce degrees $\ell$ satisfying $|\ell-k|\leq 2R$. If $\ell\geq k$, monotonicity gives $\lambda_{t,\ell}\leq\lambda_{t,k}$; if instead 
$\ell<k$, the restriction $k\leq m\leq \lfloor\rho t\rfloor$ yields
\[
    \lambda_{t,\ell}\lesssim\lambda_{t,k},
    \qquad
    |\ell-k|\leq2R.
\]
Thus, every degree $\ell$ appearing in $w_R\phi_k$ satisfies $\lambda_{t,\ell}\lambda_{t,k}^{-2} \lesssim \lambda_{t,k}^{-1}$. Since only a fixed number of terms contribute to each degree $\ell$, multiplication by $w_R$ increases the quadratic form by at most a constant factor.

\paragraph{Choice of amplitude.}
Because $g_{\cJ_{\intm}}(u)=1$, $\Delta$ is exactly half of the pointwise separation between the two alternatives. We choose $\Delta$ as large as our bounds allow, while ensuring that the perturbed densities remain in the H\"{o}lder class and that the corresponding observed distributions remain difficult to distinguish. Lemma \ref{lem:operator_bound} encodes the latter constraint, as it implies that the $\chi^2$ divergence is bounded by a constant multiple of $\Delta^2/S_m(u)$. Hence, to keep the $n$-sample divergence bounded, it suffices to take
\[
\Delta \leq c\sqrt{\frac{S_m(u)}{n}},
\]
for a sufficiently small $c>0$.

We introduce one final lemma to verify the first constraint.

\begin{lemma}
\label{lem:holder_bound}
Suppose Assumption \ref{assm:constants_homo} holds. Then, for every $m\in\cM_t$, uniformly over $u\in[\delta,1-\delta]$,
\[
    \norm{g_{\cJ_{\intm}}}_{C^s([0,1])}
    \lesssim
    m^s.
\]
\end{lemma}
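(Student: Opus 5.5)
We need to show $\|g\|_{C^s}\lesssim m^s$ where $g=w_R h$, with
\[
h(q)=\sum_{k\in\cJ_{\intm}} c_k\phi_k(q),\qquad c_k=\frac{\lambda_{t,k}^{-1}\phi_k(u)}{\sum_{j\in\cJ_{\intm}}\lambda_{t,j}^{-1}\phi_j(u)^2}.
\]
The plan is to control the derivatives of $h$ on the interior by Bernstein-type inequalities and near the endpoints by the vanishing of $w_R$, then assemble the H\"older norm by interpolation.

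\textbf{Step 1: bound the coefficients.} By $\lambda_{t,k}\le1$ and Lemma~\ref{lem:localized_block_comparison}, the denominator $D$ satisfies $D\asymp S_m(u)\gtrsim m$. This uses $\sum_{k\le m}\phi_k(u)^2\asymp m$, which follows from Lemma~\ref{lem:legendre_block} together with the classical interior bound $|\phi_k(u)|\lesssim_\delta 1$ (Szeg\H{o}'s bound $|P_k(v)|\lesssim (k(1-v^2))^{-1/2}$). Since $m\le\rho t$, the eigenvalues satisfy $\lambda_{t,k}^{-1}\le\lambda_{t,m}^{-1}$, and this is where the sum is concentrated. Rather than bounding coefficients individually, use Cauchy--Schwarz:
\[
|h(q)|\le \frac{\bigl(\sum_{k\in\cJ_{\intm}}\lambda_{t,k}^{-1}\phi_k(u)^2\bigr)^{1/2}\bigl(\sum_{k\in\cJ_{\intm}}\lambda_{t,k}^{-1}\phi_k(q)^2\bigr)^{1/2}}{D}
=\Bigl(\frac{\sum_{k\in\cJ_{\intm}}\lambda_{t,k}^{-1}\phi_k(q)^2}{D}\Bigr)^{1/2}.
\]
On the interior $[\delta/2,1-\delta/2]$, $\phi_k(q)^2\lesssim1$ uniformly. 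Moreover, $\lambda_{t,k}^{-1}$ is increasing in $k$, so the numerator is at most $\sum_k\lambda_{t,k}^{-1}$, while $D\gtrsim$ the same weighted sum by Lemma~\ref{lem:legendre_block} applied over blocks of width $W_0$ (Abel summation against the increasing weights). Hence $\|h\|_{L_\infty[\delta/2,1-\delta/2]}\lesssim1$. Globally, $|\phi_k|\le\sqrt{2k+1}$ and the weighted form $q(1-q)\phi_k(q)^2\lesssim1$ give $|h(q)|\lesssim \min\{m^{1/2}, (q(1-q))^{-1/2}\}$, since by the same Cauchy--Schwarz the numerator is at most $\min(m,(q(1-q))^{-1/2})$ times the weighted sum.

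\textbf{Step 2: derivatives.} Write $p_k=(2q-1)$ and note $h$ is a polynomial of degree $m$. Apply Bernstein's inequality for algebraic polynomials in the weighted form
\[
|h^{(j)}(q)|\lesssim \Bigl(\frac{m}{\sqrt{q(1-q)}}\Bigr)^{j}\,\|h\|
\]
on a region, combined with Markov's inequality $\|h^{(j)}\|_\infty\lesssim m^{2j}\|h\|_\infty$ near the endpoints. By Leibniz,
\[
g^{(j)}=\sum_i \binom{j}{i} w_R^{(j-i)}h^{(i)},
\]
and $|w_R^{(j-i)}(q)|\lesssim (q(1-q))^{R-(j-i)}$. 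Split $[0,1]$ into the interior and boundary layers $q(1-q)\ge m^{-2}$ versus $q(1-q)<m^{-2}$.
\begin{itemize}
\item In the bulk, a weighted Bernstein/Ditzian--Totik inequality in the $L_\infty$ norm weighted by $(q(1-q))^{1/4}$ gives $|h^{(i)}|\lesssim m^i (q(1-q))^{-i/2-1/4}$. Multiplying by $(q(1-q))^{R-j+i}$ with $R\ge r+2$ yields a bound $\lesssim m^j$ for $j\le r+1$.
\item In the layer, Markov gives $|h^{(i)}|\lesssim m^{2i+1/2}$, while $w$ factors give $(q(1-q))^{R-j+i}\le m^{-2(R-j+i)}$. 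The product is $\lesssim m^{2j+1/2-2R}\le m^j$ since $R\ge r+2\ge j+1$.
\end{itemize}
This yields $\|g^{(j)}\|_\infty\lesssim m^j$ for $j=0,\dots,r+1$.

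\textbf{Step 3: H\"older seminorm.} For $|q-q'|\ge 1/m$, bound by $2\|g^{(r)}\|_\infty|q-q'|^{-\alpha}|q-q'|^\alpha\lesssim m^r m^\alpha$. Otherwise, use the mean value theorem with $\|g^{(r+1)}\|_\infty\lesssim m^{r+1}$, giving $m^{r+1}|q-q'|^{1-\alpha}\le m^{r+\alpha}$. Summing the terms, the total is $\lesssim m^s$.

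The main obstacle is Step 1 combined with the weighted Bernstein step: getting the sup of $h$ essentially $O(1)$, with controlled boundary blow-up, despite the inverse-eigenvalue weights. The first approach is the Cauchy--Schwarz reduction plus Abel summation with Lemma~\ref{lem:legendre_block} to compare weighted sums of $\phi_k(q)^2$ and $\phi_k(u)^2$. If that fails, the fallback is the crude bound $|c_k|\lesssim \lambda_{t,k}^{-1}/D$ with $\lambda_{t,k}^{-1}\le\lambda_{t,m}^{-1}$, accepting polynomial losses absorbed by taking $R$ larger.
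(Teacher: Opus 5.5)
Your main line is correct, but it reaches the derivative bounds by a different mechanism from the paper's.

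The paper works degree by degree. It shows $\|(w_R\phi_k)^{(j)}\|_\infty\lesssim k^j$ for each $k$, pairing Szeg\H{o}'s pointwise bound $\{q(1-q)\}^{\ell/2+1/4}|\phi_k^{(\ell)}(q)|\lesssim k^\ell$ with $|w_R^{(a)}(q)|\lesssim\{q(1-q)\}^{R-a}$. It then sums using only an $\ell^1$ bound on the coefficients, $\sum_k\lambda_{t,k}^{-1}|\phi_k(u)|/D\lesssim\sum_k\lambda_{t,k}^{-1}/D\lesssim1$.

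You instead treat $h$ as a single polynomial of degree $m$. The reproducing-kernel Cauchy--Schwarz bound, together with Bernstein's inequality $\sqrt{q(1-q)}\,\phi_k(q)^2\lesssim1$, gives $\{q(1-q)\}^{1/4}|h(q)|\lesssim1$. A Jacobi-weighted Ditzian--Totik inequality then turns this into $\{q(1-q)\}^{i/2+1/4}|h^{(i)}(q)|\lesssim m^i$. That inequality holds on all of $[0,1]$, so your Markov boundary layer is redundant, though harmless. Step~3 coincides with the paper's interpolation.

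What each route buys: yours needs only zeroth-order information about the $\phi_k$, but it imports a less elementary weighted Markov--Bernstein inequality that you would need to cite. The paper's route is self-contained given classical pointwise Legendre estimates.

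Both routes rest on the same comparison $\sum_{k\in\cJ_{\intm}}\lambda_{t,k}^{-1}\lesssim D$, and your sketch of it is imprecise. Monotonicity with shifted blocks (your Abel summation) handles every block except the top one. There you need the reverse comparability $\lambda_{t,m}^{-1}\lesssim\lambda_{t,m-2W_0}^{-1}$. This is exactly where $m\le\rho t$ enters (Lemma~\ref{lem:inverse_eig_ratio}), and it genuinely fails near $m=t$, since $\lambda_{t,t}^{-1}=(2t+1)\lambda_{t,t-1}^{-1}$. Your sentence invoking $m\le\rho t$ only states $\lambda_{t,k}^{-1}\le\lambda_{t,m}^{-1}$, which holds regardless of $\rho$.

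Finally, your fallback would not work. Replacing $\sum_k\lambda_{t,k}^{-1}$ by $m\lambda_{t,m}^{-1}$ loses a factor that is unbounded once $m^2/t\to\infty$. That loss occurs at interior points where $w_R\asymp1$, so no choice of $R$ absorbs it.
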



The proof first establishes that
\[
    \|g_{\cJ_{\intm}}^{(j)}\|_\infty \lesssim m^j, \qquad j=0,\ldots,r+1, 
\]
using the fact that the vanishing of $w_R$ at the endpoints compensates for the growth of the Legendre derivatives. Interpolating between the bounds for the $r$-th and $(r+1)$-st derivatives then gives
\[
    [g_{\cJ_{\intm}}^{(r)}]_{C^\alpha([0,1])} \lesssim m^{r+\alpha} = m^s.
\]


We therefore choose $\Delta$ to satisfy both constraints and define the perturbed densities for the intermediate $t$ lower bound as follows.

\begin{definition}[Densities for the intermediate $t$ lower bound]
\label{def:statistical_lower_bound_densities}
For $q\in[0,1]$ and a sufficiently small constant $c>0$, the perturbed densities for the intermediate $t$ lower bound are
\[
    p_{\pm,\Delta_m}(q)
    \equiv
    1\pm\Delta_mg_{\cJ_{\intm}}(q),
    \qquad
    \Delta_m
    \equiv
    c\min\left\{
        m^{-s},
        \sqrt{\frac{S_m(u)}{n}}
    \right\}.
\]
\end{definition}

\begin{figure}[t!]
\centering
\includegraphics[width=\textwidth]{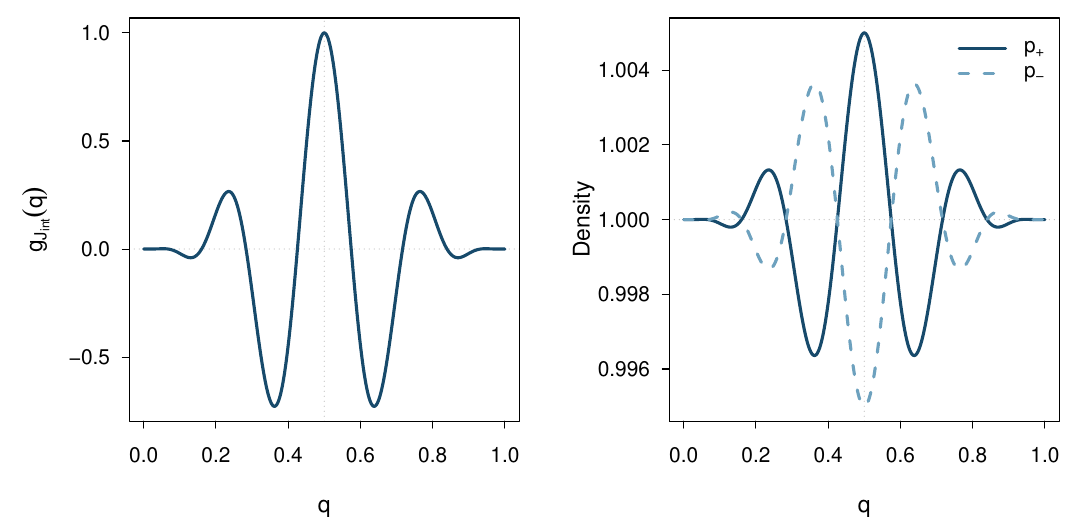}
\caption{Example shape $g_{\cJ_{\intm}}(q)$ and the corresponding perturbed densities $p_{\pm,\Delta_m}(q) =1\pm\Delta_mg_{\cJ_{\intm}}(q)$, with target point $u=0.5$. The perturbation is normalized and integrates to zero, so the separation created at the target point is accompanied by deviations of opposite sign elsewhere on the interval. $w_R$ suppresses the perturbation near the endpoints.} 
\end{figure}

By Lemma \ref{lem:holder_bound}, the first term in the definition of $\Delta_m$ guarantees
\[
    \Delta_m \norm{g_{\cJ_{\intm}}}_{C^s}\lesssim c,
\]
so $p_{\pm,\Delta_m}\in\Ht$ for sufficiently small $c$. The same choice of $c$ also yields $p_{-,\Delta_m}\geq1/2$, and hence
\[
    q_{-,\Delta_m}(x)
    \geq
    \frac{1}{2(t+1)},
    \qquad
    x=0,\ldots,t.
\]
The second term in the definition of $\Delta_m$, together with Lemma
\ref{lem:operator_bound}, gives
\[
    n\chi^2(q_{+,\Delta_m},q_{-,\Delta_m})
    \lesssim
    \frac{n\Delta_m^2}{S_m(u)}
    \lesssim
    c^2.
\]
The tensorization inequality then yields
\[
    \chi^2\left(
        q_{+,\Delta_m}^{\otimes n},
        q_{-,\Delta_m}^{\otimes n}
    \right)
    \leq
    \exp(Cc^2)-1.
\]
Thus, the corresponding observed product distributions remain difficult to distinguish, while the mixing densities are separated at $u$ by $2\Delta_m$. The following theorem gives the resulting intermediate $t$ lower bound.

\begin{theorem}[Intermediate $t$ lower bound] \label{thm:stat_lower_bound} Fix $u\in[\delta,1-\delta]$. Suppose that $L,p_{\max}>1$ and that Assumption \ref{assm:constants_homo} holds. Then 
\[ 
    \cR_{n,t} \gtrsim \max_{m\in\cM_t} \min\left\{ m^{-s}, \sqrt{\frac{S_m(u)}{n}} \right\}. 
\] 
\end{theorem}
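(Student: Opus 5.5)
The plan is Le Cam's two-point method applied to the pair $p_{\pm,\Delta_m}$ of Definition~\ref{def:statistical_lower_bound_densities}. Everything is done for one fixed $m\in\cM_t$, and at the end we maximize over $m$. Since every implied constant from Lemmas~\ref{lem:legendre_block}--\ref{lem:holder_bound} is uniform in $m\in\cM_t$ and $u\in[\delta,1-\delta]$, a single choice of $c>0$ works for all $m$ simultaneously. That uniformity is what lets us take the maximum over $m$ outside the bound.

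\textbf{Step 1: membership in $\Ht$.} Fix $m\in\cM_t$. By construction $g_{\cJ_{\intm}}(u)=1$. Also $\int_0^1 g_{\cJ_{\intm}}=0$, because $w_R$ has degree $2R$ while every Legendre degree used exceeds $2R$. Hence each $p_{\pm,\Delta_m}$ integrates to one and $p_{+,\Delta_m}(u)-p_{-,\Delta_m}(u)=2\Delta_m$. Lemma~\ref{lem:holder_bound} together with $\Delta_m\le c\,m^{-s}$ gives $\Delta_m\|g_{\cJ_{\intm}}\|_{C^s}\le Cc$. Since $\|g\|_\infty\le\|g\|_{C^s}$, this also gives $\|\Delta_m g_{\cJ_{\intm}}\|_\infty\le Cc$. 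The constant density $1$ has $C^s$-norm $1$ and sup-norm $1$. So $\|p_{\pm,\Delta_m}\|_{C^s}\le 1+Cc\le L$ and $\|p_{\pm,\Delta_m}\|_\infty\le 1+Cc\le p_{\max}$, using the strict inequalities $L,p_{\max}>1$ and taking $c$ small. The same choice gives $p_{\pm,\Delta_m}\ge 1/2$, hence nonnegativity and $q_{-,\Delta_m}(x)\ge 1/\{2(t+1)\}$.

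\textbf{Step 2: the $\chi^2$ bound.} Assumption~\ref{assm:constants_homo} guarantees $m+2R\le t$, so all degrees in $g_{\cJ_{\intm}}$ are at most $t$. Using the displayed $\chi^2$ identity with $c_0=1/2$, together with Lemma~\ref{lem:operator_bound}, we get
\[
\chi^2(q_{+,\Delta_m},q_{-,\Delta_m})\le 8\Delta_m^2\ip{g_{\cJ_{\intm}}}{\cBD_t g_{\cJ_{\intm}}}_{L_2}\le C'\Delta_m^2/S_m(u)\le C'c^2/n,
\]
where the last step uses $\Delta_m\le c\sqrt{S_m(u)/n}$. Tensorization then yields
\[
\chi^2\big(q_{+,\Delta_m}^{\otimes n},q_{-,\Delta_m}^{\otimes n}\big)\le e^{C'c^2}-1,
\]
which can be made at most, say, $1/2$ by taking $c$ smaller.

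\textbf{Step 3: Le Cam's lemma.} Theorem 2.2 of \cite{tsybakov2008introduction}, in its $\chi^2$ version, bounds the minimax probability of error for the pointwise loss with separation $2\Delta_m$ below by a constant. Markov's inequality converts this into
\[
\cR_{n,t}\ge c''\Delta_m= c''c\min\{m^{-s},\sqrt{S_m(u)/n}\}.
\]
The constants do not depend on $m$, so we may take the maximum over $m\in\cM_t$. The main obstacle is exactly this uniformity: tracking that the constants in Lemmas~\ref{lem:operator_bound} and~\ref{lem:holder_bound} do not depend on $m$ or $t$. That is where the restriction $m\le\rho t$ is used, through $\lambda_{t,\ell}\lesssim\lambda_{t,k}$ for $|\ell-k|\le 2R$ with a constant depending only on $\rho$ and $R$. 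Given those lemmas as stated, the remaining argument is routine.
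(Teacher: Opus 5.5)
Your proposal is correct and follows the paper's proof essentially step for step. You get membership in $\Ht$ from Lemma~\ref{lem:holder_bound} and the $m^{-s}$ cap on $\Delta_m$, and the $\chi^2$ bound from $q_{-,\Delta_m}\geq 1/\{2(t+1)\}$ and Lemma~\ref{lem:operator_bound}, followed by tensorization, Le Cam's lemma, and a final maximization over $m\in\cM_t$, justified as the paper does by constants uniform in $m$, $t$, and $u$.
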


This lower bound can be interpreted as follows. At each fixed $m$, the inner minimum limits the amplitude so that both the smoothness and indistinguishability constraints are satisfied. The outer maximum then selects an admissible $m$ that yields the largest feasible pointwise separation. Combining this result with the small $t$ lower bound gives a more familiar bias--variance form (Section~\ref{sec:combined_lower_bound}).

\subsection{Small $t$ lower bound}
\label{sec:trial_only_lower_bound}

The intermediate $t$ lower bound concerns perturbations that affect the observed binomial probabilities but whose effects are too small to distinguish reliably from a finite sample. Small $t$ lower bound, on the other hand, regards the fundamental identifiability barrier of binomial mixtures. 

Since each Bernstein basis function $b_{t,x}$ is a polynomial of degree at most $t$, any perturbation orthogonal to $\Pi_t$ does not affect the observed distribution. The small $t$ lower bound is constructed precisely by exploiting this; we leverage a block above degree $t+2R$ defined as
\[
    \cJ_{\sml}
    \equiv
    \cJ(t+2R,W_0)
    =
    \{t+2R+1,\ldots,t+2R+W_0\},
\]
and choose our perturbation to be
\[
    g_{\cJ_{\sml}}(q)
    \equiv w_R(q) \frac{\sum_{k\in\cJ_{\sml}}\phi_k(u)\phi_k(q)}{\sum_{k\in\cJ_{\sml}}\phi_k(u)^2},
\]
which is now unweighted since the inverse eigenvalues are neither defined nor necessary. 
Since the block has width $W_0$, the denominator in $g_{\cJ_{\sml}}(q)$ is again bounded away from zero uniformly over $u\in[\delta,1-\delta]$ by Lemma \ref{lem:legendre_block}. Further, as with $g_{\cJ_{\intm}}$, we have by construction that $g_{\cJ_{\sml}}(u)=1$ and $\int_0^1g_{\cJ_{\sml}}(q)\dd q=0$. The same argument as in Lemma \ref{lem:holder_bound} yields the following bound.

\begin{lemma}
\label{lem:trial_perturbation}
Suppose that $R\geq r+2$. Then, uniformly over $u\in[\delta,1-\delta]$ and $t\in\mathbb N$, 
\[
    \norm{g_{\cJ_{\sml}}}_{C^s([0,1])}
    \lesssim
    t^s.
\]
\end{lemma}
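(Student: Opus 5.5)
The plan is to follow the argument sketched for Lemma~\ref{lem:holder_bound}, with maximal degree $M\equiv t+2R+W_0\asymp t$ in place of $m$. First bound the sup norms $\|g_{\cJ_{\sml}}^{(j)}\|_\infty\lesssim t^j$ for $j=0,\ldots,r+1$. Then interpolate between the $r$-th and $(r+1)$-st derivative bounds to control the H\"older seminorm. For $|q-q'|\le t^{-1}$ use the mean value theorem, which gives $|f^{(r)}(q)-f^{(r)}(q')|\le \|f^{(r+1)}\|_\infty|q-q'|\lesssim t^{r+1}|q-q'|^{1}\le t^{r+\alpha}|q-q'|^\alpha$. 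For $|q-q'|>t^{-1}$ use $2\|f^{(r)}\|_\infty\lesssim t^r\le t^{r+\alpha}|q-q'|^\alpha$. Summing over $j\le r$ gives $\norm{g_{\cJ_{\sml}}}_{C^s}\lesssim t^s$.

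The normalization is handled first. By Lemma~\ref{lem:legendre_block} with block width $W_0$, $\sum_{k\in\cJ_{\sml}}\phi_k(u)^2\gtrsim W_0\gtrsim 1$ uniformly in $u\in[\delta,1-\delta]$ and $t$. The numerator $h(q)\equiv\sum_{k\in\cJ_{\sml}}\phi_k(u)\phi_k(q)$ has only $W_0$ terms. The classical interior bound $|\phi_k(u)|\lesssim_\delta 1$ (from $|P_k(\cos\theta)|\lesssim(k\sin\theta)^{-1/2}$ and $\sqrt{2k+1}$) makes the coefficients $O(1)$. It therefore suffices to show, for each single $k\asymp t$,
\[
\big\|(w_R\phi_k)^{(j)}\big\|_\infty\lesssim k^j,\qquad j\le r+1.
\]

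By Leibniz, $(w_R\phi_k)^{(j)}=\sum_{i\le j}\binom{j}{i}w_R^{(j-i)}\phi_k^{(i)}$, and $w_R^{(j-i)}$ vanishes to order $R-(j-i)$ at both endpoints; more precisely $|w_R^{(j-i)}(q)|\lesssim (q(1-q))^{R-(j-i)}$ with constants depending on $\delta$. The key input, and the main obstacle, is a weighted Bernstein/Markov-type estimate for derivatives of Legendre polynomials:
\[
|\phi_k^{(i)}(q)|\lesssim \min\Big\{k^{2i+1/2},\; k^{i}\,\big(q(1-q)\big)^{-i/2-1/4}\Big\}.
\]
The first bound is Markov's inequality applied to $\|\phi_k\|_\infty=\sqrt{2k+1}$. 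The second comes from iterating Bernstein's inequality $|f'(q)|\le k\,(q(1-q))^{-1/2}\|f\|_\infty$-type bounds, or directly from Szeg\H{o}'s asymptotics (4.21.7), together with the weighted estimate $(q(1-q))^{1/4}|\phi_k(q)|\lesssim 1$. With $R\ge r+2$, the weight exponent $R-(j-i)\ge R-(r+1)\ge 1$, and $(q(1-q))^{R-j+i}(q(1-q))^{-i/2-1/4}$ remains bounded since $R-j+i-i/2-1/4\ge R-(r+1)+i/2-1/4>0$. Hence each Leibniz term is $\lesssim k^i\le k^j$, giving the claim.

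To be careful at the endpoint layer $q(1-q)\lesssim k^{-2}$, where the Bernstein form degenerates, I would instead use the Markov bound $k^{2i+1/2}$ times $(q(1-q))^{R-j+i}\lesssim k^{-2(R-j+i)}$. This yields $k^{2j-2R+1/2}\le k^{j}$ because $R\ge r+2\ge j+1$. All constants depend only on $r,R,W_0,\delta$, so the bound is uniform in $u$ and $t$, completing the plan.
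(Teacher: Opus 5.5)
Your proposal is correct and follows essentially the same route as the paper: normalize via Lemma~\ref{lem:legendre_block} and the uniform interior bound on $\phi_k(u)$, reduce to the single-degree estimate $\|(w_R\phi_k)^{(j)}\|_\infty\lesssim k^j$ (via Leibniz, $|w_R^{(a)}|\lesssim\{q(1-q)\}^{R-a}$ and the weighted Legendre derivative bound), use $k\le(1+2R+W_0)t$, and interpolate. The differences are cosmetic: you prove the interpolation step by hand, and you add a Markov-inequality backup in the layer $q(1-q)\lesssim k^{-2}$, which is harmless but unnecessary because the Szeg\H{o}-based bound $\{q(1-q)\}^{\ell/2+1/4}|\phi_k^{(\ell)}(q)|\lesssim k^\ell$ already holds uniformly on $[0,1]$; for that weighted estimate, use the Szeg\H{o} route you mention, since iterating the plain Bernstein inequality does not by itself give the extra $\{q(1-q)\}^{-1/4}$ weight.
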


We therefore choose an amplitude of order $t^{-s}$, which satisfies the H\"{o}lder constraint for a sufficiently small multiplicative constant.

\begin{definition}[Densities for the small $t$ lower bound]
For a sufficiently small constant $c>0$, the perturbed densities for the small $t$ lower bound are
\[
    p_{\pm,\Delta_t}(q)
    \equiv
    1\pm\Delta_tg_{\cJ_{\sml}}(q),
    \qquad
    \Delta_t
    \equiv
    ct^{-s}.
\]
\end{definition}

By Lemma \ref{lem:trial_perturbation}, the two densities belong to $\Ht$ for sufficiently small $c$ and differ at the target point by $2ct^{-s}$. At the same time, since $\cB_tg_{\cJ_{\sml}}=0$, 
\[
    q_{\pm,\Delta_t}(x)
    =
    \frac{1}{t+1},
    \qquad
    x=0,\ldots,t,
\]
so the two mixing densities induce exactly the same distribution of the observed sample. We thus have the following small $t$ lower bound.

\begin{theorem}[Small $t$ lower bound]
\label{thm:trial_lower_bound}
Fix $u\in[\delta,1-\delta]$ and suppose that $L,p_{\max}>1$. Then, for every $t\in\mathbb N$, 
\[
    \cR_{n,t}
    \gtrsim
    t^{-s}.
\]
\end{theorem}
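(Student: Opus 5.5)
The plan is a direct application of Le Cam's two-point method \citep[Theorem 2.2]{tsybakov2008introduction}, using the pair $p_{\pm,\Delta_t}=1\pm\Delta_t g_{\cJ_{\sml}}$ with $\Delta_t=ct^{-s}$. Three facts must be checked. First, both densities lie in $\Ht$. Second, they are separated at $u$ by $2\Delta_t$. Third, the induced laws of the sample $(X_1,\ldots,X_n)$ coincide, so the testing affinity equals one and no sample size helps.

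\textbf{Step 1: membership in $\Ht$.} By construction $g_{\cJ_{\sml}}(u)=1$. We also have $\int_0^1 g_{\cJ_{\sml}}=0$, because $w_R\in\Pi_{2R}$ and every degree $k\in\cJ_{\sml}$ exceeds $2R$, so Legendre orthogonality kills the integral. The constant function $1$ has $C^s$ norm $1$ and sup norm $1$. Lemma \ref{lem:trial_perturbation} then gives
\[
    \norm{p_{\pm,\Delta_t}}_{C^s}\leq 1+Cc,
\]
and $\norm{g_{\cJ_{\sml}}}_\infty\lesssim t^s$ yields $\norm{p_{\pm,\Delta_t}}_\infty\leq 1+Cc$ and $p_{\pm,\Delta_t}\geq 1-Cc\geq 0$. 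Since $L,p_{\max}>1$, choosing $c$ small enough (independent of $t,u$) places both densities in $\Ht$.

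\textbf{Step 2: identical observed laws.} For each $x$, $b_{t,x}\in\Pi_t$. Hence $w_R b_{t,x}\in\Pi_{t+2R}$ is orthogonal to every $\phi_k$ with $k>t+2R$, which gives $(\cB_t g_{\cJ_{\sml}})(x)=0$. By \eqref{eq:discrete_uniform}, $q_{\pm,\Delta_t}\equiv 1/(t+1)$, so the two $n$-fold product laws are identical.

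\textbf{Step 3: conclude.} For any estimator $\widehat p(u)$, the triangle inequality gives
\[
    \E_{p_+}\abs{\widehat p(u)-p_+(u)}+\E_{p_-}\abs{\widehat p(u)-p_-(u)}\geq 2\Delta_t,
\]
because both expectations are taken under the same law. Therefore
\[
    \sup_{p}\E_p\abs{\widehat p(u)-p(u)}\geq \Delta_t = ct^{-s}.
\]

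The only step needing care is the uniformity of the constant in Step 1. Lemma \ref{lem:trial_perturbation} supplies it, and it in turn rests on Lemma \ref{lem:legendre_block} bounding the denominator below by a multiple of $W_0$. Since both lemmas are given, the remaining argument is routine.
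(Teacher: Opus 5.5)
Your proposal is correct and follows the paper's proof essentially step for step. You verify that $p_{\pm,\Delta_t}\in\Ht$ via Lemma~\ref{lem:trial_perturbation}, show that $\cB_t g_{\cJ_{\sml}}=0$ by degree-orthogonality so that the two observed laws coincide, and conclude by the triangle inequality under the common law.
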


\subsection{Combined lower bound}
\label{sec:combined_lower_bound}

Combining the intermediate $t$ and small $t$ lower bounds in Theorems~\ref{thm:stat_lower_bound}--\ref{thm:trial_lower_bound} yields
\[
    \cR_{n,t} \gtrsim \max\left[t^{-s},
        \ \max_{m_0\leq m\leq\floor{\rho t}}
        \min\left\{m^{-s},\sqrt{\frac{S_m(u)}{n}}\right\}\right].
\]
As alluded to, we can write the combined lower bound in a more familiar bias--variance form. 

\begin{corollary}
\label{cor:lower_bound_bias_variance}
Fix $u\in[\delta,1-\delta]$. Suppose that $L,p_{\max}>1$ and that Assumption \ref{assm:constants_homo} holds. Then
\[
    \cR_{n,t} \gtrsim \min_{m\in\cM_t}\left\{m^{-s} + \sqrt{\frac{S_m(u)}{n}}\right\}.
\]
\end{corollary}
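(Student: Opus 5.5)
The plan is to deduce the corollary from the combined bound
\[
    \cR_{n,t}\gtrsim \max\Big[t^{-s},\ \max_{m\in\cM_t}\min\{m^{-s},\sqrt{S_m(u)/n}\}\Big],
\]
using the monotonicity of the two competing terms in $m$. Write $B(m)\equiv m^{-s}$, which is decreasing in $m$, and $V(m)\equiv\sqrt{S_m(u)/n}$, which is nondecreasing in $m$ because $S_m(u)$ is a sum of nonnegative terms. The first step is the elementary fact that for such a pair the max--min and the min--max agree up to a constant, apart from boundary effects at the ends of $\cM_t=\{m_0,\ldots,\floor{\rho t}\}$.

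Let $m^\ast$ be the smallest $m\in\cM_t$ with $V(m)\geq B(m)$, if one exists. I will treat three cases.

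\emph{Case (i): $m^\ast$ exists and $m^\ast>m_0$.} At $m^\ast-1$ we have $V<B$, so
\[
    \max_m\min\{B,V\}\geq V(m^\ast-1),
    \qquad
    \max_m\min\{B,V\}\geq B(m^\ast).
\]
Also $\min_m(B+V)\leq 2V(m^\ast)\wedge\ldots$, more precisely $\min_m(B+V)\leq B(m^\ast)+V(m^\ast)\leq 2V(m^\ast)$. So I need $V(m^\ast)\lesssim V(m^\ast-1)$ or $V(m^\ast)\lesssim B(m^\ast)$. The one-step increment of $S$ is
\[
    S_{m^\ast}(u)-S_{m^\ast-1}(u)=\lambda_{t,m^\ast}^{-1}\phi_{m^\ast}(u)^2 .
\]
By the comparability of eigenvalues at nearby degrees used in Lemma~\ref{lem:operator_bound} (valid since $m\leq\rho t$), $\lambda_{t,m^\ast}^{-1}\lesssim\lambda_{t,k}^{-1}$ for $k$ within a fixed distance $W_0$ below $m^\ast$. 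Moreover, the interior bound $\phi_k(u)^2\lesssim 1$ together with Lemma~\ref{lem:legendre_block} applied to the block of width $W_0$ ending at $m^\ast-1$ (available since $m_0\geq 2W_0+2$) gives that this increment is $\lesssim S_{m^\ast-1}(u)$. Hence $V(m^\ast)\lesssim V(m^\ast-1)$, and this case is done.

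\emph{Case (ii): $m^\ast=m_0$.} Then $\min_m(B+V)\leq 2V(m_0)$, and the maximum is at least $B(m_0)\asymp 1$. Since $V(m_0)\leq V(m)$ for all $m$, I need $V(m_0)\lesssim 1$. This should follow from $S_{m_0}(u)\lesssim 1$ for fixed $m_0$ with $t\geq t_0$ and $n$ large. More simply, the risk is trivially bounded by a constant because the class is sup-norm bounded, and the small-$t$ bound handles such degenerate regimes. I would check that $V(m_0)\leq B(m_0)\asymp1$ automatically, because $S_{m_0}(u)=O(1)$ whenever $n\gtrsim1$.

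\emph{Case (iii): no such $m^\ast$ exists, i.e.\ $V<B$ on all of $\cM_t$.} Then $\max_m\min\{B,V\}=V(\floor{\rho t})$ and $\min_m(B+V)\leq 2B(\floor{\rho t})\asymp t^{-s}$. The $t^{-s}$ term in the combined bound from Theorem~\ref{thm:trial_lower_bound} covers this case.

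The main obstacle is Case (i), namely controlling the one-step growth of $S_m(u)$. I expect the single increment $\lambda_{t,m}^{-1}\phi_m(u)^2$ to be dominated by the last $W_0$ terms of $S_{m-1}(u)$ using the eigenvalue ratio bound $\lambda_{t,k-j}/\lambda_{t,k}\lesssim 1$ for $j\leq W_0$ when $k\leq\rho t$. That ratio is a product of $j$ factors of the form $(t+k+1)/(t-k+1)\leq(1+\rho)/(1-\rho)+o(1)$.
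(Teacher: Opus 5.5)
Your argument is correct and has the same skeleton as the paper's. You take the first crossing index $m^\ast$, handle $m^\ast=m_0$ with the uniform bound $S_{m_0}(u)\lesssim 1$ (which needs only $n\ge1$), and handle the no-crossing case by evaluating at $\lfloor\rho t\rfloor$ and invoking the $t^{-s}$ bound of Theorem~\ref{thm:trial_lower_bound}.

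The real difference is in your Case (i). Write $L_*\equiv\max_{m\in\cM_t}\min\{B(m),V(m)\}$.

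\textbf{Your route.} You bound $\min_m(B+V)$ at $m^\ast$, where $V$ dominates. This forces you to prove the one-step growth estimate $S_{m^\ast}(u)\lesssim S_{m^\ast-1}(u)$. Your derivation is valid: Lemma~\ref{lem:inverse_eig_ratio} gives comparability of $\lambda_{t,k}^{-1}$ over the $W_0$ degrees below $m^\ast\le\rho t$, and Lemma~\ref{lem:legendre_block} applies on that window.

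\textbf{The paper's route.} The paper evaluates at $m^\ast-1$ instead. There $V(m^\ast-1)=\min\{B(m^\ast-1),V(m^\ast-1)\}\le L_*$. The bias term needs only the trivial ratio $B(m^\ast-1)\le 2^sB(m^\ast)$ (since $m^\ast\ge2$), together with $B(m^\ast)=\min\{B(m^\ast),V(m^\ast)\}\le L_*$.

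So the step you flag as the main obstacle disappears once you shift the index. The paper's version uses nothing about the variance term beyond monotonicity. That is why it can be reused for the heterogeneous bound with variance term $\sqrt{m/\cI_m}$. There $\cI_{m-1}/\cI_m$ can be arbitrarily large, so your one-step comparability would fail.

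One small slip in Case (ii): the case hypothesis is $V(m_0)\ge B(m_0)$, so ``$V(m_0)\le B(m_0)$ automatically'' is wrong as written. What you need, and have, is $V(m_0)\lesssim 1\asymp B(m_0)$. The remark that the risk is bounded by a constant plays no role in a lower bound.
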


To see this, note that $m^{-s}$ decreases with $m$, while $\sqrt{S_m(u)/n}$ increases with $m$. If their first crossing occurs above $m_0$, the sum at the previous degree is bounded by a constant multiple of the inner minimum at the crossing. If the variance term remains smaller than the bias throughout the admissible range, the sum at $m=\lfloor\rho t\rfloor$ is of order $t^{-s}$, which is covered by the small $t$ lower bound. Finally, if the variance term is already at least as large as the bias at $m_0$, the inner minimum there equals $m_0^{-s}$; the uniform boundedness of $S_{m_0}(u)$ bounds the sum by a constant multiple of this value.

\begin{remark}
The three lower bounds in
Theorem~\ref{thm:fundamental_lower_bound} follow from
Corollary~\ref{cor:lower_bound_bias_variance} and the bound
\[
    S_m(u)\gtrsim
    m\exp\left(c\frac{m^2}{t}\right),
\]
where $c>0$ depends only on $\rho$, uniformly over $m\in\cM_t$ and $u\in[\delta,1-\delta]$.

\begin{enumerate}
    \item \textbf{Small $t$ regime}: every $m \in \cM_t$ satisfies $m\leq\rho t$, so $m^{-s}\gtrsim t^{-s}$. Corollary~\ref{cor:lower_bound_bias_variance} therefore gives
    $\cR_{n,t}\gtrsim t^{-s}$.

    \item \textbf{Intermediate $t$ regime}: for a sufficiently large constant $C>0$,  $m\leq C\sqrt{t\ell_{n,t}}$ and $m>C\sqrt{t\ell_{n,t}}$ satisfy
    \[
        m^{-s}\gtrsim(t\ell_{n,t})^{-s/2},
        \qquad
        \sqrt{\frac{S_m(u)}{n}}
        \gtrsim(t\ell_{n,t})^{-s/2},
    \]
    respectively, where the latter follows from the lower bound on $S_m(u)$. This yields $\cR_{n,t}\gtrsim(t\ell_{n,t})^{-s/2}$.

    \item \textbf{Large $t$ regime}: using $S_m(u)\gtrsim m$ and minimizing over all $m>0$ gives
    \[
        \cR_{n,t}
        \gtrsim
        \min_{m>0}\left\{m^{-s}+\sqrt{\frac{m}{n}}\right\}
        \asymp n^{-s/(2s+1)}.
    \]
\end{enumerate}

More details can be found in the proof of Corollary~\ref{cor:simplified_minimax_rate}.
\end{remark}

\section{Attainability}\label{sec:attainability}
In this section, we show that the lower bounds from Section
\ref{sec:fundamental_limits} are attainable by a suitably regularized orthogonal series estimator. The key additional fact we leverage in constructing this estimator is that each Legendre coefficient of the mixing density can be estimated unbiasedly from an associated polynomial of the observed binomial sample. These observable polynomials turn out to be the \emph{Hahn polynomials}, which we introduce below.

\subsection{Proposed estimator}\label{sec:proposed_estimator}
For $k=0,\ldots,t$, the normalized Hahn polynomials on $\{0,\ldots,t\}$ are formally defined as
\[
    \psi_{k,t}(x)
    \equiv
    \lambda_{t,k}^{-1/2}(t+1)(\cB_t\phi_k)(x),
    \qquad
    x=0,\ldots,t.\footnote{These polynomials are known as discrete Chebyshev or Gram polynomials \citep{ciesielski1987explicit,waldron2006bernstein}.}
\]
Interestingly, these polynomials arise naturally from Lemma \ref{lem:bernstein_durrmeyer_eigen}. Note that, for $k,\ell=0,\ldots,t$, we have
\[
    \ip{(t+1)\cB_t\phi_k}{(t+1)\cB_t\phi_\ell}_{d,t} = \ip{\phi_k}{\cBD_t\phi_\ell}_{L_2} = \lambda_{t,\ell}\mathbbm 1(k=\ell). 
\]
Thus, applying the scaled binomial mixture operator $(t+1)\cB_t$ to the
shifted Legendre polynomials produces mutually orthogonal functions on
$\{0,\ldots,t\}$, with
\[
    \norm{(t+1)\cB_t\phi_k}_{d,t}^2 = \lambda_{t,k}.
\]
Dividing each resulting function by its norm therefore yields the polynomials.
Thus, after normalization, the function obtained by applying the binomial mixture operator to the $k$-th shifted Legendre polynomial is the $k$-th Hahn polynomial, where $\sqrt{\lambda_{t,k}}$ quantifies how much the $k$-th Legendre component is attenuated. Our next proposition precisely characterizes the corresponding relationship from the Hahn polynomials back to the Legendre polynomials: conditional averaging of $\psi_{k,t}(X)$ given $Q=q$ recovers $\phi_k(q)$ with $\sqrt{\lambda_{t,k}}$.

\begin{proposition}
\label{prop:singular_system}
For $k=0,\ldots,t$, we have
\[
    \E_p\{\psi_{k,t}(X)\mid Q=q\} = \sqrt{\lambda_{t,k}}\phi_k(q).
\]
\end{proposition}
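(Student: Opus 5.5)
The plan is to compute the conditional expectation directly and then read it as an application of the Bernstein--Durrmeyer operator. By definition,
\[
    \E_p\{\psi_{k,t}(X)\mid Q=q\} = \sum_{x=0}^t b_{t,x}(q)\psi_{k,t}(x)
    = \lambda_{t,k}^{-1/2}(t+1)\sum_{x=0}^t b_{t,x}(q)(\cB_t\phi_k)(x).
\]
By \eqref{eq:bernstein_durrmeyer}, the right-hand side is exactly $\lambda_{t,k}^{-1/2}(\cBD_t\phi_k)(q)$.

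For $k\le t$, Lemma~\ref{lem:bernstein_durrmeyer_eigen} gives $\cBD_t\phi_k=\lambda_{t,k}\phi_k$. Substituting, the conditional expectation equals $\lambda_{t,k}^{-1/2}\lambda_{t,k}\phi_k(q)=\sqrt{\lambda_{t,k}}\,\phi_k(q)$, which is the claim.

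The only point needing care is that the normalization $\lambda_{t,k}^{-1/2}$ is well defined. This holds because $\lambda_{t,k}=\prod_{j=0}^{k-1}(t-j)/(t+j+2)>0$ for $k\le t$, since each factor has $t-j\ge t-k+1\ge1$. The conditional law of $X$ given $Q=q$ is $\operatorname{Bin}(t,q)$ regardless of $p$, so the identity holds for every mixing density $p$. I expect no real obstacle here: the proposition is a one-line consequence of the eigen-relation, and the one thing to keep straight is that conditional averaging against $b_{t,x}(q)$ is the adjoint step that turns $(t+1)\cB_t$ into $\cBD_t$.
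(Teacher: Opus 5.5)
Your proposal is correct and matches the paper's proof: the paper writes $\sum_x b_{t,x}(q)\psi_{k,t}(x)$ as $(\cA_t\psi_{k,t})(q)$, uses $\cA_t^*=(t+1)\cB_t$ to identify $\cA_t\psi_{k,t}=\lambda_{t,k}^{-1/2}\cBD_t\phi_k$, and then applies Lemma~\ref{lem:bernstein_durrmeyer_eigen}. This is exactly your computation in operator notation, and your check that $\lambda_{t,k}>0$ for $k\le t$ is a welcome detail the paper leaves implicit.
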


Proposition \ref{prop:singular_system} implies that, if
$\theta_k\equiv\ip{p}{\phi_k}_{L_2}$ is the $k$-th Legendre coefficient of $p$, then
\begin{equation}\label{eq:coefficient_identity}
    \E_p\{\psi_{k,t}(X)\} = \sqrt{\lambda_{t,k}}\theta_k.
\end{equation}
Thus, for every $k=0,\ldots,t$, an unbiased estimator of $\theta_k$ can be constructed simply as
\begin{equation}\label{eq:coefficient_estimator}
    \widehat\theta_k
    \equiv
    \frac{1}{n\sqrt{\lambda_{t,k}}}
    \sum_{i=1}^n\psi_{k,t}(X_i).
\end{equation}
If $Q_1,\ldots,Q_n$ were directly observed, $\theta_k$ could be
estimated simply by averaging $\phi_k(Q_i)$. 
\eqref{eq:coefficient_identity} reveals a crucial structural fact that, under binomial mixture observations, the coefficient is instead observed after being \emph{multiplied by} $\sqrt{\lambda_{t,k}}$. Hence, dividing by $\sqrt{\lambda_{t,k}}$ gives rise to the natural unbiased estimator in \eqref{eq:coefficient_estimator}. 

At the same time, as $\lambda_{t,k}$ decreases with $k$, this division also amplifies variance. Thus, while the lower-degree coefficients can be estimated relatively stably, directly
inverting increasingly high degrees becomes progressively less stable. A simple, intuitive solution is to regularize the unstable components; we thus estimate $p$ using a regularized orthogonal series estimator, which we call the \emph{Legendre--Hahn series
estimator}.\footnote{This is a usual aspect of a statistical inverse problem where inversion is carried out along the singular functions while the less stable components are regularized. For reference, see Sections 1.2.2.1--1.2.2.2 of \cite{cavalier2011inverse}.}

\begin{definition}[Legendre--Hahn series estimator]
\label{def:legendre_hahn_estimator}
Fix a function $\eta\in C^\infty([0,\infty))$ satisfying
\[
    0\leq\eta\leq1,
    \qquad
    \eta(v)=1\ \text{for }v\in[0,1],
    \qquad
    \eta(v)=0\ \text{for }v\in[2,\infty).
\]
For an integer $1\leq m\leq\floor{t/2}$, let $\widehat\theta_k$ be defined as in \eqref{eq:coefficient_estimator}, and define
\begin{equation}\label{eq:legendre_hahn_estimator}
    \widehat p_m(q)
    \equiv
    \sum_{k=0}^{2m}
    \eta(k/m)\widehat\theta_k\phi_k(q),
    \qquad
    q\in[0,1].
\end{equation}
\end{definition}

As can be seen, $\eta$ is the regularization function that preserves all polynomial degrees $k \leq m$ (``stable components''), discards those with $k \geq 2m$ (``unstable components''), and smooths the intermediate. This smoothing is to obtain a uniformly stable approximation operator under the sup norm, which allows us to control the bias. $2m \leq t$ ensures that every coefficient appearing in \eqref{eq:legendre_hahn_estimator} is identifiable. 

\subsection{Bias and variance}\label{sec:upper_bias_variance}
We now analyze the bias and the variance of our proposed estimator. Although $\cR_{n,t}$ is defined with respect to the absolute pointwise risk, it is convenient to first control the pointwise root mean squared error. The corresponding upper bound under absolute risk then follows from the Cauchy--Schwarz inequality. 


\begin{theorem}[Upper bound for Legendre--Hahn series estimator]
\label{thm:upper_bound}
Fix $\delta\in(0,1/2)$ and $u\in[\delta,1-\delta]$. Suppose that $p\in\Ht$ and $1\leq m\leq\floor{t/2}$. Then the estimator $\widehat p_m$ in Definition \ref{def:legendre_hahn_estimator} satisfies
\[
    \left[\E_p\{\widehat p_m(u)-p(u)\}^2\right]^{1/2} 
    \lesssim m^{-s} + \sqrt{\frac{S_{2m}(u)}{n}}.
\]
\end{theorem}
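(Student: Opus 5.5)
We split the mean squared error into squared bias and variance, $\E_p\{\widehat p_m(u)-p(u)\}^2=\{\E_p\widehat p_m(u)-p(u)\}^2+\Var_p\{\widehat p_m(u)\}$, and bound the two pieces separately.

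\textbf{Variance.} By \eqref{eq:coefficient_estimator},
\[
\widehat p_m(u)=\frac1n\sum_{i=1}^n Y_i,\qquad Y_i\equiv\sum_{k=0}^{2m}\eta(k/m)\lambda_{t,k}^{-1/2}\phi_k(u)\psi_{k,t}(X_i),
\]
so $\Var_p\{\widehat p_m(u)\}\le n^{-1}\E_p Y_1^2$. We then bound the marginal pmf of $X$ using $\|p\|_\infty\le p_{\max}$:
\[
\Pp_p(X=x)=(\cB_t p)(x)\le p_{\max}\int_0^1 b_{t,x}(q)\dd q=\frac{p_{\max}}{t+1}.
\]
Hence $\E_p Y_1^2\le p_{\max}\|Y\|_{d,t}^2$, where $Y$ here denotes the function $x\mapsto\sum_{k}\eta(k/m)\lambda_{t,k}^{-1/2}\phi_k(u)\psi_{k,t}(x)$. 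The $\psi_{k,t}$ are orthonormal in $\langle\cdot,\cdot\rangle_{d,t}$, and $0\le\eta\le1$, so
\[
\|Y\|_{d,t}^2=\sum_{k\le 2m}\eta(k/m)^2\lambda_{t,k}^{-1}\phi_k(u)^2\le S_{2m}(u).
\]
This requires $2m\le t$, so that all $\lambda_{t,k}>0$. This gives the variance term $S_{2m}(u)/n$.

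\textbf{Bias.} By \eqref{eq:coefficient_identity}, $\E_p\widehat\theta_k=\theta_k$, so
\[
\E_p\widehat p_m(u)=(V_mp)(u),\qquad V_mf\equiv\sum_{k}\eta(k/m)\ip{f}{\phi_k}_{L_2}\phi_k.
\]
This is a de la Vallée Poussin-type smoothed Legendre partial sum. Because $\eta=1$ on $[0,1]$, $V_m$ reproduces $\Pi_m$. Therefore, for any $P\in\Pi_m$,
\[
|(V_mp)(u)-p(u)|\le\bigl(1+\|V_m\|_{\infty\to\infty}\bigr)\|p-P\|_\infty .
\]
Jackson's theorem on $[0,1]$ gives $\inf_{P\in\Pi_m}\|p-P\|_\infty\lesssim L\,m^{-s}$ for $p\in C^s$. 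It therefore suffices to show $\sup_m\|V_m\|_{\infty\to\infty}<\infty$.

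\textbf{Main obstacle.} The real work is the uniform boundedness of $V_m$ on $L_\infty$. We write $V_mf(u)=\int_0^1K_m(u,q)f(q)\dd q$ with kernel
\[
K_m(u,q)=\sum_k\eta(k/m)\phi_k(u)\phi_k(q),
\]
so the task is to show $\sup_u\int_0^1|K_m(u,q)|\dd q\lesssim1$. This is a known result for smooth Cesàro/de la Vallée Poussin summation of Jacobi (here Legendre) expansions, for instance via summation by parts and Cesàro kernel bounds. Alternatively it follows from localized kernel estimates of the form
\[
|K_m(u,q)|\lesssim\frac{m}{\bigl(1+m\,d(u,q)\bigr)^{N}\sqrt{w_m(u)w_m(q)}},
\]
with $d$ the arccos distance (Petrushev--Xu), whose integral is uniformly bounded. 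The smoothness of $\eta$ is exactly what makes these estimates available. I would cite such a result rather than reprove it.

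Pointwise at an interior $u$ it suffices to have the bound only for $u\in[\delta,1-\delta]$, but the full $L_1$-norm of the kernel is needed regardless. Combining the two pieces and taking square roots gives the claimed $m^{-s}+\sqrt{S_{2m}(u)/n}$.
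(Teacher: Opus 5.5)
Your proposal is correct and follows the paper's proof essentially step for step. It uses the same variance bound via $\Pp_p(X=x)\le p_{\max}/(t+1)$ and the discrete orthonormality of the $\psi_{k,t}$, and the same bias argument via reproduction of $\Pi_m$, a uniform bound on $\|V_m\|_{\infty\to\infty}$, and Jackson's theorem. The only difference is that you cite the uniform boundedness of $V_m$, whereas the paper proves it in Lemma~\ref{lem:filtered_legendre}: it sums by parts three times to write $V_m$ as a combination of nonnegative Ces\`aro $(C,2)$ Legendre kernels with coefficients of size $O(m^{-3}\|\eta^{(3)}\|_\infty)$, which is exactly the ``summation by parts and Ces\`aro kernel'' route you mention.
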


The proof is simple. First, \eqref{eq:coefficient_identity} implies
\[
    \E_p\{\widehat p_m(q)\} = (V_mp)(q), \qquad (V_mf)(q) \equiv \sum_{k=0}^{2m} \eta(k/m) \ip{f}{\phi_k}_{L_2}\phi_k(q).
\]
Hence, we can decompose the estimation error at $u$ as
\[
    \widehat p_m(u)-p(u) = \{\widehat p_m(u)-(V_mp)(u)\} + \{(V_mp)(u)-p(u)\}.
\]

Now, we note that $V_mf$ is uniformly bounded.

\begin{lemma}\label{lem:filtered_legendre}
Let $\eta\in C^\infty([0,\infty))$ satisfy 
\[ 
    0\leq\eta\leq1, \qquad \eta(v)=1 \text{ for } v\in[0,1], \qquad \eta(v)=0 \text{ for } v\in[2,\infty).
\] 
Then
\[
    \sup_{m\geq1}\sup_{\norm{f}_\infty\leq1}\norm{V_mf}_\infty < \infty.
\]
\end{lemma}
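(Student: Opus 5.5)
Write $V_mf(q)=\int_0^1 K_m(q,y)f(y)\,dy$ with kernel
\[
K_m(q,y)=\sum_{k=0}^{2m}\eta(k/m)\phi_k(q)\phi_k(y).
\]
Then $\norm{V_m}_{\infty\to\infty}=\sup_q\int_0^1|K_m(q,y)|\,dy$, so the statement is equivalent to the uniform Lebesgue-constant bound
\[
\sup_{m\ge1}\sup_{q\in[0,1]}\int_0^1|K_m(q,y)|\,dy<\infty .
\]
This is the classical fact that smooth (de la Vallée Poussin type) summation of Jacobi/Legendre expansions is uniformly bounded on $L_\infty$. I would either cite it (Petrushev--Xu localized kernels, or Szeg\H{o}/Askey-type results on Cesàro and delayed means of Legendre series) or prove it from kernel localization, as follows.

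\textbf{Localization estimate.} Substitute $q=(1+\cos\theta)/2$ and $y=(1+\cos\varphi)/2$. Using $\eta\in C^\infty$ with compact support, summation by parts $N$ times against the Christoffel--Darboux structure gives
\[
|K_m(q,y)|\le C_N\,\frac{m^2}{\sqrt{(w_m(\theta))\,(w_m(\varphi))}}\,\bigl(1+m|\theta-\varphi|\bigr)^{-N},
\qquad w_m(\theta)\equiv \sin\theta+m^{-1},
\]
where the factor $m^2$ accounts for the Jacobian in the $[0,1]$ normalization. Concretely:
\begin{itemize}
\item write $\eta(k/m)$ through its $N$-th finite differences;
\item the $N$-fold Cesàro kernels of the Legendre series have the required decay, with the Cesàro-$(\ge 1)$ estimates taken from Szeg\H{o}'s book.
\end{itemize}
An alternative is to treat the sum as a function of the spectral variable via the Darboux formula $\sum_k \phi_k(q)\phi_k(y)=$ (difference quotient), combined with Legendre asymptotics $|P_k(\cos\theta)|\lesssim (k\sin\theta)^{-1/2}\wedge 1$ and their derivative bounds.

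\textbf{Integration.} Integrating the localization estimate in $y$, with $dy\asymp\sin\varphi\,d\varphi$, gives
\[
\int_0^1|K_m(q,y)|\,dy \lesssim \int_0^\pi \frac{m\,w_m(\varphi)}{\sqrt{w_m(\theta)w_m(\varphi)}}\,\bigl(1+m|\theta-\varphi|\bigr)^{-N}\,d\varphi \lesssim 1 .
\]
The last bound uses $w_m(\varphi)\le w_m(\theta)+|\theta-\varphi|$, so that $w_m(\varphi)/w_m(\theta)\lesssim 1+m|\theta-\varphi|$. The remaining integral is then $\lesssim 1$ once $N\ge3$.

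\textbf{Main obstacle.} The main obstacle is the sharp localization bound near the endpoints $\theta\in\{0,\pi\}$, where Legendre polynomials reach size $\sqrt{k}$. I would handle this by citing Petrushev--Xu, \emph{Localized polynomial frames on the interval with Jacobi weights} (Theorem 2.4 there), which gives exactly this estimate for $\alpha=\beta=0$. If a self-contained argument is wanted, a cheaper route is:
\begin{itemize}
\item note that $V_mf=\sum_j c_j\,\sigma^{(N)}_{j}f$ is a combination of Cesàro means of order $N\ge1$, with $\sum_j|c_j|\lesssim 1$ uniformly in $m$ because of the smoothness of $\eta$ (bounded variation of its $N$-th differences);
\item invoke the classical theorem (Kogbetliantz; Szeg\H{o} §9.41) that Cesàro $(C,\delta)$ means of Legendre series with $\delta>1/2$ are uniformly bounded on $C[-1,1]$.
\end{itemize}
That route reduces the lemma to summation by parts plus one cited theorem.
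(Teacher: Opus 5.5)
Your fallback (``cheaper'') route is essentially the paper's proof. The differences are in the Ces\`aro order and in which classical fact about Ces\`aro means you invoke.

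\textbf{The paper's argument.} It sums by parts three times to write $V_mf(q)=\sum_{j=0}^{2m}\bigl\{\sum_{a=0}^3(-1)^a\binom{3}{a}\eta((j+a)/m)\bigr\}\int_0^1H_j(q,v)f(v)\,dv$, where $H_j(q,v)=\sum_{k\le j}\binom{j-k+2}{2}\phi_k(q)\phi_k(v)$ are unnormalized $(C,2)$ Ces\`aro kernels. It then
\begin{itemize}
\item bounds the third differences by $m^{-3}\norm{\eta^{(3)}}_\infty$;
\item uses $\sum_{j\le 2m}\binom{j+2}{2}\asymp m^3$;
\item controls each $H_j$ through its \emph{positivity} (Szeg\H{o}), which gives $\int_0^1\abs{H_j(q,v)}\,dv=\binom{j+2}{2}$ exactly.
\end{itemize}

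\textbf{Your version.} You make the same reduction at a general order $N$. Your $c_j$ are precisely $\binom{j+N}{N}$ times the $(N+1)$-th differences of $\eta(\cdot/m)$, so $\sum_j\abs{c_j}\lesssim m^{-N-1}\norm{\eta^{(N+1)}}_\infty\, m^{N+1}$. This is the concrete form of your ``bounded variation'' remark. You then replace positivity by Kogbetliantz's bounded Lebesgue constants for $(C,\delta)$ means, $\delta>1/2$. That works equally well; it even works with $N=1$, which needs only $\eta\in C^2$. Positivity at order $2$ has the advantage of giving each normalized mean $L_\infty$-norm exactly one, with no Lebesgue-constant estimate to cite.

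\textbf{Your localization route.} Your primary route via Petrushev--Xu is a genuinely different and heavier argument. It is acceptable as a citation, but the prefactor you state is wrong. The map $q\mapsto 2q-1$ has constant Jacobian, so $K_m$ is just twice the $dx$-orthonormal Legendre kernel on $[-1,1]$. The $\alpha=\beta=0$ localization bound therefore has prefactor $m$, not $m^2$; this is confirmed by the diagonal sizes $K_m(q,q)\asymp m$ at interior points and $\asymp m^2$ at the endpoints. With $m^2$ as you wrote it, your integration would give $O(m)$ rather than $O(1)$. Your displayed integral already uses $m$, so this is an internal inconsistency rather than a fatal gap, but you should correct the bound and drop the Jacobian explanation.
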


Since $\eta(k/m)=1$ for $k\leq m$, $V_m$ reproduces every polynomial in $\Pi_m$. Combining this reproduction property with Lemma \ref{lem:filtered_legendre} yields
\[
    \norm{V_mp-p}_\infty
    \lesssim
    \inf_{\pi\in\Pi_m}\norm{p-\pi}_\infty.
\]
Then, standard polynomial approximation theory for $C^s$ functions implies
\[
    \norm{V_mp-p}_\infty \lesssim m^{-s}.
\]

For the variance term, note that the observed probability mass function satisfies
\[
    \Pp_p(X=x)
    =
    \int_0^1b_{t,x}(q)p(q)\dd q
    \leq
    \frac{p_{\max}}{t+1}, \qquad x=0,\ldots,t.
\]
Thus, together with the discrete orthonormality of the Hahn polynomials, we have
\begin{align*}
    \Var_p\{\widehat p_m(u)\}
    &\leq
    \frac{1}{n}\E_p\bigg[\bigg\{\sum_{k=0}^{2m}
    \eta(k/m)
    \frac{\phi_k(u)}{\sqrt{\lambda_{t,k}}}
    \psi_{k,t}(X)\bigg\}^2\bigg]\\
    &\leq
    \frac{p_{\max}}{n(t+1)}
    \sum_{x=0}^t \bigg\{\sum_{k=0}^{2m}
    \eta(k/m)
    \frac{\phi_k(u)}{\sqrt{\lambda_{t,k}}}
    \psi_{k,t}(x)\bigg\}^2\\
    &=
    \frac{p_{\max}}{n}
    \sum_{k=0}^{2m}
    \eta(k/m)^2
    \lambda_{t,k}^{-1}\phi_k(u)^2\\
    &\leq
    \frac{p_{\max}S_{2m}(u)}{n}.
\end{align*}
Combining the bias and variance bounds proves Theorem \ref{thm:upper_bound}. Since $\E|Z|\leq(\E Z^2)^{1/2}$, Theorem \ref{thm:upper_bound} also gives the same upper bound under the absolute pointwise risk defining $\cR_{n,t}$.

\subsection{Overall minimax rate}\label{sec:overall_minimax_rate}

Combining the upper and lower bounds shows that  the bias--variance trade-off achieved by our orthogonal series estimator is minimax optimal.

\begin{corollary}[Minimax rate]\label{cor:minimax_rate}
Fix $u\in[\delta,1-\delta]$. Suppose that Assumption \ref{assm:constants_homo} holds and $L,p_{\max}>1$. Then
\[
    \cR_{n,t}
    \asymp
    \min_{m\in\cM_t}
    \left\{
        m^{-s}
        +
        \sqrt{\frac{S_m(u)}{n}}
    \right\}.
\]
\end{corollary}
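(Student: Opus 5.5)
The lower bound is exactly Corollary~\ref{cor:lower_bound_bias_variance}, so only the matching upper bound needs proof:
\[
\cR_{n,t}\lesssim \min_{m\in\cM_t}\Big\{m^{-s}+\sqrt{S_m(u)/n}\Big\}.
\]
Let $m^\star\in\cM_t$ attain this minimum. The plan is to evaluate Theorem~\ref{thm:upper_bound} at a degree $m'$ with $2m'\approx m^\star$, so that the variance term involves $S_{2m'}(u)\leq S_{m^\star}(u)$.

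\emph{Large $m^\star$.} Suppose $m^\star\geq 2$, and set $m'\equiv\floor{m^\star/2}\geq1$. Since $m^\star\leq\floor{\rho t}\leq t$, we have $m'\leq\floor{t/2}$, so $\widehat p_{m'}$ is admissible in Definition~\ref{def:legendre_hahn_estimator}. Theorem~\ref{thm:upper_bound} together with $\E|Z|\leq(\E Z^2)^{1/2}$ then gives
\[
\sup_{p\in\Ht}\E_p\abs{\widehat p_{m'}(u)-p(u)}\lesssim (m')^{-s}+\sqrt{S_{2m'}(u)/n}.
\]
Because $m'\geq m^\star/3$, the first term is at most $3^s (m^\star)^{-s}$. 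For the second term, $S_m(u)$ is a sum of nonnegative terms and is therefore nondecreasing in $m$, so $2m'\leq m^\star$ gives $S_{2m'}(u)\leq S_{m^\star}(u)$. Both terms are thus bounded by constant multiples of the corresponding terms at $m^\star$, which yields the upper bound.

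\emph{Small $m^\star$.} Since $m^\star\geq m_0\geq 4R\geq 8$, this case cannot occur. If one wants robustness anyway, take $m'=1$: the bias $\lesssim1\lesssim m_0^{-s}$ is a fixed constant, and $S_2(u)\lesssim S_{m_0}(u)$ because the eigenvalues $\lambda_{t,k}$ for $k\leq m_0\leq\rho t$ are bounded below by a constant depending on $\rho$ and $m_0$.

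\emph{Main check.} No real obstacle remains. The only step needing care is the comparison of $S_{2m'}(u)$ with $S_{m^\star}(u)$, which here requires nothing beyond monotonicity. I would also record that the implied constants depend only on $s,L,p_{\max},\delta,\rho,R,W_0$, and that the supremum over $p$ is uniform, because the bias bound in Theorem~\ref{thm:upper_bound} uses only $\norm{p}_{C^s}\leq L$ and the variance bound uses only $\norm{p}_\infty\leq p_{\max}$. Combining the two sides gives the stated $\asymp$.
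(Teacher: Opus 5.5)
Your proposal is correct and follows the paper's proof: the lower bound is taken from Corollary~\ref{cor:lower_bound_bias_variance}, and the upper bound comes from applying Theorem~\ref{thm:upper_bound} at degree $\lfloor m/2\rfloor$, using $\lfloor m/2\rfloor\geq m/3$ and the monotonicity of $S_m(u)$. Your separate small-$m^\star$ case is vacuous, as you note, because $m_0\geq 4R\geq 8$.
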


The lower bound follows from Section \ref{sec:combined_lower_bound}. For the upper bound,
we can apply Theorem~\ref{thm:upper_bound} with degree $\floor{m/2}$, use the monotonicity of $S_m(u)$, and then minimize over $m_0\leq m\leq\floor{\rho t}$.

This minimum can be thought of as selecting the polynomial degree that optimally balances bias and variance. 
Since $s$ is often unknown in practice, this motivates data-driven selection of the polynomial degree; one possible approach is Lepski's method
\citep{lepski1991problem}, which begins at the smallest candidate degree and increases it until the estimate at $u$ agrees with those at all larger degrees within certain comparison thresholds. 
These thresholds must allow for differences caused by sampling noise; for $j>m$, the variance calculation above yields
\[
    \operatorname{Var}_p
    \{\widehat p_j(u)-\widehat p_m(u)\}
    \leq \frac{p_{\max}S_{2j}(u)}{n}.
\]
Its square root, therefore, bounds the standard deviation of each difference without requiring knowledge of $s$. We leave choosing the thresholds 
and establishing the resulting adaptive pointwise risk bounds for future work. 


Our last corollary for this section derives the rates shown earlier in Theorem \ref{thm:fundamental_lower_bound}, which are more interpretable. The lower bounds follow
as outlined in Remark~3; the matching upper bounds follow from Corollary~\ref{cor:minimax_rate} by using the bound $S_m(u) \lesssim m\exp(Cm^2/t)$
and choosing the polynomial degree appropriately in each regime.

\begin{corollary}
\label{cor:simplified_minimax_rate}
Fix $u\in[\delta,1-\delta]$. Suppose that Assumption \ref{assm:constants_homo} holds and  $L,p_{\max}>1$. Then, for all sufficiently large $n$,
\[
    \cR_{n,t}
    \asymp
    \begin{cases}
        t^{-s},
        & t \leq \log n, \\[0.5em]
        \Big[t\left\{ 1+\log\left(\frac{n}{t^{s+1/2}}\right) \right\} \Big]^{-s/2},
        & \log n <  t < n^{\frac{2}{2s+1}}, \\[0.5em]
        n^{-\frac{s}{2s+1}},
        & t \geq n^{\frac{2}{2s+1}}.
    \end{cases}
\]
\end{corollary}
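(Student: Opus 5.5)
Starting from Corollary~\ref{cor:minimax_rate}, the task is to evaluate $\min_{m\in\cM_t}\{m^{-s}+\sqrt{S_m(u)/n}\}$ up to constants in each regime. The key ingredient is a two-sided estimate on $S_m(u)$ that I would prove first:
\[
    m\exp\!\left(c\frac{m^2}{t}\right)\lesssim S_m(u)\lesssim m\exp\!\left(C\frac{m^2}{t}\right),
\]
uniformly in $m\in\cM_t$ and $u\in[\delta,1-\delta]$.

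For the eigenvalues I would write
\[
    \log\lambda_{t,k}^{-1}=\sum_{j<k}\log\frac{t+j+2}{t-j}.
\]
Since $j\le\rho t$, each summand is between $c'(j+1)/t$ and $C'(j+1)/t$, so $\lambda_{t,k}^{-1}=\exp(\Theta(k^2/t))$. For the upper bound on $S_m(u)$, use $\phi_k(u)^2\lesssim 1$ on interior points (the classical Bernstein bound $|P_k(\cos\theta)|\lesssim (k\sin\theta)^{-1/2}$ gives $\phi_k(u)^2\lesssim_\delta 1$). Then $S_m(u)\le (m+1)\max_k\lambda_{t,k}^{-1}\lesssim m e^{Cm^2/t}$. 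For the lower bound, restrict to the block $\cJ(\lfloor m/2\rfloor,\lceil m/2\rceil)$. On it $\lambda_{t,k}^{-1}\ge e^{c m^2/t}$, and Lemma~\ref{lem:legendre_block} gives $\sum\phi_k(u)^2\gtrsim m$ once $m\ge 2W_0$, which $m_0$ guarantees.

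\textbf{Lower bounds.} I would follow Remark~3 and take $\cR\gtrsim t^{-s}$ directly from Theorem~\ref{thm:trial_lower_bound}, which covers the small-$t$ regime.

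In the intermediate regime, set $m^*=C\sqrt{t\ell_{n,t}}$. For $m\le m^*$ the bias term is $\gtrsim(t\ell)^{-s/2}$. For $m>m^*$, we get $n^{-1}S_m\gtrsim n^{-1}m e^{cC^2\ell}$, and $e^{\ell}=e\,n/t^{s+1/2}$. So for $C$ large this is $\gtrsim m\,t^{-(s+1/2)}\cdot(n/t^{s+1/2})^{cC^2-1}\ge (t\ell)^{-s}$, using $m\ge\sqrt t$, $n\ge t^{s+1/2}$ and $\ell\ge1$. One must absorb $\ell^{-s}$; since $(n/t^{s+1/2})^{\epsilon}\gtrsim \ell^{s}$ via $x^\epsilon\gtrsim(1+\log x)^s$, this is fine. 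If $m^*>\rho t$, which happens only when $t\lesssim\ell\le 1+\log n$, every admissible $m$ has bias $\gtrsim t^{-s}\gtrsim(t\ell)^{-s/2}$ up to constants, and the bound follows.

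In the large-$t$ regime, $S_m\gtrsim m$ gives $n^{-s/(2s+1)}$.

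\textbf{Upper bounds.} In the small-$t$ regime, choose $m=\lfloor\rho t\rfloor$. Then $S_m\lesssim t e^{C\rho^2 t}\le t\,n^{C\rho^2}$. To make $\sqrt{S_m/n}\lesssim t^{-s}$, I would shrink $\rho$ (or take $m=\lfloor\rho' t\rfloor$ with small $\rho'$; the bias is still $\asymp t^{-s}$) so that $C\rho'^2<1/2$. This gives variance $\lesssim\sqrt{t}\,n^{-1/4}\ll t^{-s}$ because $t\le\log n$ and $n$ is large.

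In the intermediate regime, take $m=c_1\sqrt{t\ell}$ with $c_1$ small. This is admissible because $\ell\lesssim\log n<t$ gives $m\le\rho t$ after adjusting $c_1$, and $m\ge m_0$ for large $n$. The variance satisfies
\[
    \frac{S_m}{n}\lesssim\frac{\sqrt{t\ell}\,(e n t^{-s-1/2})^{Cc_1^2}}{n}.
\]
With $Cc_1^2\le1$, this is $\le \sqrt{t\ell}\,t^{-s-1/2}\cdot e\lesssim t^{-s}\ell^{1/2}$, which we need to be $\lesssim(t\ell)^{-s}$. That falls short by a factor $\ell^{s+1/2}$. To fix this, take $Cc_1^2=1/2$, so the exponent leaves a factor $(t^{s+1/2}/n)^{1/2}=e^{-(\ell-1)/2}$, which kills any power of $\ell$.

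In the large-$t$ regime, take $m\asymp n^{1/(2s+1)}\le\sqrt t$, so $m^2/t\lesssim1$ and $S_m\lesssim m$.

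The main obstacle is this logarithmic bookkeeping in the intermediate regime, matching powers of $\ell$ against $e^{\ell}$, together with the near-boundary cases where $m^*$ exits $\cM_t$. I would handle these with the crude inequality $\ell^{a}\lesssim_{a,\epsilon} e^{\epsilon\ell}$.
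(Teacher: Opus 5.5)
Your proposal is correct and follows essentially the same route as the paper. It uses the two-sided bound $m e^{cm^2/t}\lesssim S_m(u)\lesssim m e^{Cm^2/t}$ from the eigenvalue estimates and Lemma~\ref{lem:legendre_block}, the split at $C\sqrt{t\ell_{n,t}}$ for the intermediate lower bound, and the degree choices $m\asymp t$, $\sqrt{t\ell_{n,t}}$, $n^{1/(2s+1)}$, with the constant chosen so the exponential factor uses at most half of the available decay (the paper's condition $b_\rho\gamma^2\le 1/2$). Two small touch-ups are needed: require $Cc_1^2\le 1/2$ (rather than $=1/2$) together with $c_1\le\rho/\sqrt{2}$, and use $\max\{m_0,\lfloor\rho' t\rfloor\}$ in the small-$t$ upper bound so the degree stays in $\cM_t$ when $t$ is near $t_0$.
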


\section{Extension to trial heterogeneity}
\label{sec:trial_heterogeneity}
We now extend the lower and upper bounds to heterogeneous trials. We focus on describing how the heterogeneous trial parameters affect the information available for estimation and, in turn, the minimax rate. Construction of the lower bounds adopts the smooth perturbation framework in Section~\ref{sec:shared_framework} with appropriate modifications
to accommodate trial heterogeneity, so we defer its details to  Appendix~\ref{sec:trial_heterogeneity_details}.

Under trial heterogeneity, we observe independent random variables
\[
    X_i\mid Q_i=q\sim\operatorname{Bin}(t_i,q),
    \qquad
    Q_i\overset{\mathrm{iid}}{\sim}p,
    \qquad i=1,\ldots,n,
\]
where the trial parameters $\mathbf t\equiv(t_1,\ldots,t_n)$ are again assumed fixed and known. We denote the corresponding absolute pointwise minimax risk by
\[
    \cR_{n,\mathbf t}
    \equiv
    \inf_{\widehat p}\sup_{p\in\Ht}
    \E_p\abs{\widehat p(u)-p(u)},
\]
and let $T\equiv\max_{1\leq i\leq n}t_i$ denote the largest trial parameter.

At a high level, the main difference from the homogeneous setting is that observations with different trial counts can provide different amounts of information at a given polynomial degree. To make this precise, for $k\geq0$, we define
\begin{equation}
\label{eq:aggregate_information}
    \cI_k \equiv \sum_{i=1}^n\lambda_{t_i,k} = \sum_{i:t_i\geq k}\lambda_{t_i,k}, \qquad \cI_0=n.
\end{equation}
As we show below, $\lambda_{t_i,k}$ quantifies the contribution of observation $i$ at degree $k$, so $\cI_k$ makes precise the sense in which ``different observations need not provide the same amount of information at a given polynomial degree''. In particular, $\cI_k$ can be interpreted as the \emph{effective sample size} for estimating the Legendre coefficient $\theta_k=\E_p\{\phi_k(Q)\}$, or more simply put, the \emph{degree-specific effective sample size}.

To see this, it is helpful to first discuss how an optimal estimator for $\theta_k$ under heterogeneous trials should be constructed. Recall that Proposition~\ref{prop:singular_system} 
engendered the natural unbiased estimator of the Legendre coefficient \eqref{eq:coefficient_estimator} under trial homogeneity. In the current setting, Proposition~\ref{prop:singular_system} now gives
\[
    \E_p\left\{\frac{\psi_{k,t_i}(X_i)}{\sqrt{\lambda_{t_i,k}}}\right\} = \theta_k,
\]
for every observation with $t_i\geq k$; thus, every observation with at least $k$ trials provides an unbiased estimate of $\theta_k$ (based on that observation alone). Since each estimate is independent, we can consider an estimator based on weighted averages of the form
\[
    \widehat\theta_k = \sum_{i:t_i\geq k}w_{i,k}\frac{\psi_{k,t_i}(X_i)}{\sqrt{\lambda_{t_i,k}}},
    \qquad \sum_{i:t_i\geq k} w_{i,k}=1.
\]
Since any such choice of weights preserves unbiasedness, it remains to choose the weights to account for the different variabilities of the individual estimators.

Suppose for simplicity that the mixing density $p$ is uniform. In this case, $X_i$ is uniform on $\{0,\ldots,t_i\}$ and, for $k\geq1$,
\[
    \E_p\{\psi_{k,t_i}(X_i)\} = \sqrt{\lambda_{t_i,k}} \ip{1}{\phi_k}_{L_2} = 0.
\]
Therefore, by the discrete orthonormality of the Hahn polynomials, we
have
\[
    \Var_p\left\{\frac{\psi_{k,t_i}(X_i)}{\sqrt{\lambda_{t_i,k}}}\right\}
    = \frac{1}{\lambda_{t_i,k}}
    \E_p\{\psi_{k,t_i}(X_i)^2\}
    = \frac{1}{\lambda_{t_i,k}(t_i+1)}
    \sum_{x=0}^{t_i}\psi_{k,t_i}(x)^2
    = \lambda_{t_i,k}^{-1}.
\]
Hence the natural inverse-variance weight of observation $i$ is $\lambda_{t_i,k}$ under the uniform mixing density, which implies that we should set 
\[
    w_{i,k} = \frac{\lambda_{t_i,k}}{\sum_{j:t_j\geq k}\lambda_{t_j,k}}
    = \frac{\lambda_{t_i,k}}{\cI_k},
\]
so that
\begin{equation}
\label{eq:legendre_coeff_estimator_het}
    \widehat\theta_k \equiv \frac{1}{\cI_k}\sum_{i:t_i\geq k}\sqrt{\lambda_{t_i,k}}\psi_{k,t_i}(X_i), \qquad 0\leq k\leq T.
\end{equation}
\eqref{eq:legendre_coeff_estimator_het} reduces exactly to
\eqref{eq:coefficient_estimator} in the homogeneous case. For $k\geq1$, its variance is
\[
    \Var_p(\widehat\theta_k) = \frac{1}{\cI_k^2}\sum_{i:t_i\geq k}\lambda_{t_i,k} = \frac{1}{\cI_k}.
\]
On the other hand, $\Var_p\{\phi_k(Q)\}=1$ if $Q_1,\ldots,Q_n$ were observed directly, so their sample average would have variance $1/n$. Thus, relative to directly observing the latent probabilities, the heterogeneous binomial sample has effective sample size $\cI_k$ for estimating the $k$th Legendre coefficient. This is the specific sense in which $\cI_k$ is a \emph{degree-specific effective sample size}.

Importantly, the same choice of weights remains well controlled uniformly over our parameter class of interest. For any $p\in\Ht$, we have
\[
    \mathbb P_p(X_i=x) = \int_0^1 b_{t_i,x}(q)p(q)\dd q \leq p_{\max}\int_0^1 b_{t_i,x}(q)\dd q
    = \frac{p_{\max}}{t_i+1}.
\]
Therefore,
\[
    \Var_p\left\{\frac{\psi_{k,t_i}(X_i)}{\sqrt{\lambda_{t_i,k}}}\right\} \leq p_{\max}\lambda_{t_i,k}^{-1},
\]
and subsequently
\[
    \Var_p(\widehat\theta_k) \leq \frac{p_{\max}}{\cI_k}.
\]
Thus, $\cI_k$ continues to control the variance uniformly over $p\in\Ht$.

\begin{figure}
    \centering
    \includegraphics[width=\linewidth]{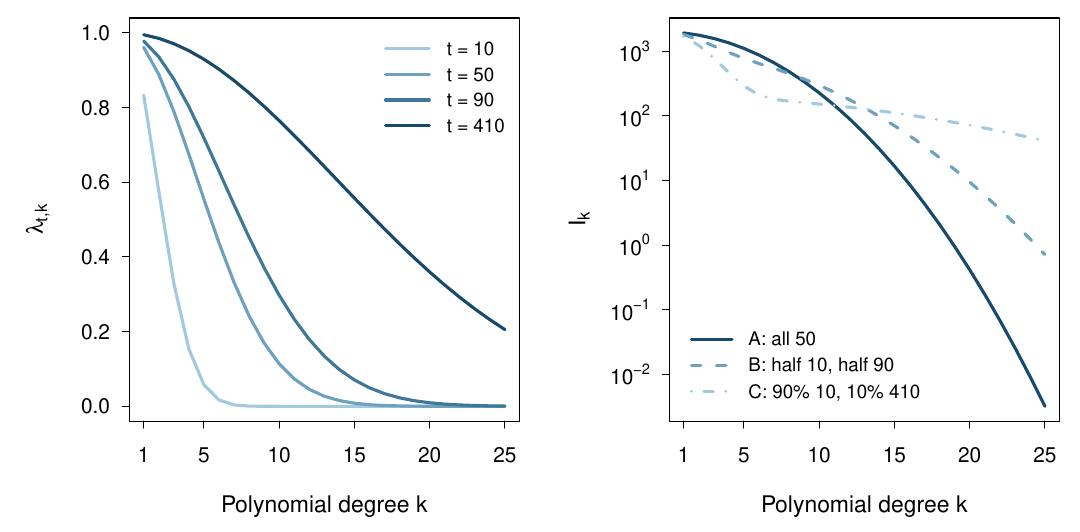}
    \caption{(a) $\lambda_{t,k}$ for $t\in\{10,50,90,410\}$, representing individual contributions to $\cI_k$. (b) $\cI_k$ for three trial allocations with $n=2{,}000$ and $\sum_{i=1}^n t_i=100{,}000$: (A) all observations have $t_i=50$; (B) half have $t_i=10$, half have $t_i=90$; (C) $90\%$ have $t_i=10$ and $10\%$ have $t_i=410$. Evenly spread trials can provide more information at lower degree; concentrating trials can preserve more information at higher degrees.}
    \label{fig:trial-information}
\end{figure}

\begin{remark}
    It is also helpful to note that
    \[
        \log\lambda_{t,k}^{-1}\asymp\frac{k^2}{t}, \qquad 1\leq k\leq\rho t.
    \]
    As a result, an observation with $t_i$ large relative to $k^2$ contributes nearly one unit to $\cI_k$, whereas one with $t_i$ small relative to $k^2$ contributes little, and contributes nothing when $t_i<k$. Thus, information at higher degrees increasingly depends on observations with larger trial counts, as illustrated in Figure~\ref{fig:trial-information}(a).
\end{remark}

\paragraph{Fundamental limits.}
With $\cI_k$, we can now state the lower bound under trial heterogeneity. We impose the following condition on the largest trial parameter, which ensures that the set of admissible polynomial degrees under trial heterogeneity
\[ 
    \cM_{\mathbf{t}} \equiv \left\{ m\in\mathbb N: m_0\leq m\leq\floor{\rho T} \right\},
\]
is nonempty.

\begin{assumption}\label{assm:constants_hetero}
We assume $T\geq\lceil m_0/\rho\rceil$,
where $m_0$ and $\rho$ are defined in Assumption \ref{assm:constants_homo}.
\end{assumption}

The following theorem characterizes the lower bound. 

\begin{theorem}[Lower bound under trial heterogeneity]\label{thm:heterogeneous_lower_bound}
Fix $s>0$, $\delta\in(0,1/2)$, and $u\in[\delta,1-\delta]$. Suppose that $L,p_{\max}>1$ and Assumption~\ref{assm:constants_hetero} holds. Then
\[
    \cR_{n,\mathbf t} \gtrsim \min_{m \in \cM_{\mathbf{t}}}\left\{m^{-s} + \sqrt{\frac{m}{\cI_m}}\right\}.
\]
\end{theorem}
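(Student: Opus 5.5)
We follow the homogeneous argument: Le Cam's two-point method around the uniform reference density, with the perturbations rebuilt for heterogeneous trials. The bound will come from two separate constructions, an intermediate-type bound
\[
    \cR_{n,\mathbf t}\gtrsim \max_{m\in\cM_{\mathbf t}}\min\left\{m^{-s},\sqrt{\frac{m}{\cI_m}}\right\}
\]
and a small-$t$ type bound $\cR_{n,\mathbf t}\gtrsim T^{-s}$. We then convert their maximum into the stated bias--variance minimum exactly as in Corollary~\ref{cor:lower_bound_bias_variance}. The conversion uses three facts: $m^{-s}$ decreases in $m$; $m/\cI_m$ is nondecreasing, since each $\lambda_{t_i,m}$ is nonincreasing in $m$; and $\cI_{m_0}\asymp n$ is within constants of $\cI_{m_0}$ at neighbouring degrees.

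\textbf{Small-$t$ bound.} We reuse $g_{\cJ_{\sml}}$ with $t$ replaced by $T$, i.e.\ the block $\cJ(T+2R,W_0)$. This function is orthogonal to $\Pi_T\supseteq\Pi_{t_i}$, so $\cB_{t_i}g=0$ for every $i$. Hence all marginals coincide and the product laws are identical. Lemma~\ref{lem:trial_perturbation} gives amplitude $cT^{-s}$, and therefore $\cR_{n,\mathbf t}\gtrsim T^{-s}$.

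\textbf{Intermediate bound.} For $m\in\cM_{\mathbf t}$ we use the \emph{unweighted} block perturbation
\[
    g_m(q)=w_R(q)\,\frac{\sum_{k\in\cJ_{\intm}}\phi_k(u)\phi_k(q)}{\sum_{k\in\cJ_{\intm}}\phi_k(u)^2}.
\]
Unweighted suffices, since the target involves only $m/\cI_m$ and not a weighted $S_m$. This function has zero integral, equals $1$ at $u$, and satisfies $\|g_m\|_{C^s}\lesssim m^s$ by the argument of Lemma~\ref{lem:holder_bound}. That argument does not involve $t$, apart from the eigenvalue weights, which are now absent.

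\textbf{Chi-square control.} By independence, $\chi^2$ tensorizes as a product, $1+\chi^2=\prod_i\{1+\chi^2(q_{+,i},q_{-,i})\}$. For each $i$, the calculation of Section~\ref{sec:shared_framework} gives
\[
    \chi^2(q_{+,i},q_{-,i})\lesssim\Delta^2\ip{g_m}{\cBD_{t_i}g_m}_{L_2}.
\]
We expand $g_m=\sum_\ell c_\ell\phi_\ell$ over degrees $\ell\le m+2R$. By Lemma~\ref{lem:bernstein_durrmeyer_eigen}, which extends to $\lambda_{t,\ell}=0$ for $\ell>t$ because $\cBD_t$ annihilates components orthogonal to $\Pi_t$,
\[
    \ip{g_m}{\cBD_{t_i}g_m}_{L_2}=\sum_\ell\lambda_{t_i,\ell}c_\ell^2.
\]
The coefficients satisfy $\sum_\ell c_\ell^2=\|g_m\|_{L_2}^2\lesssim 1/m$, since the denominator is $\asymp m$ by Lemma~\ref{lem:legendre_block} and the numerator has $L_2$ norm $\asymp\sqrt m$. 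The coefficients are supported on $\ell\in[1,m+2R]$ after multiplication by $w_R$. Summing over $i$ gives
\[
    \sum_i\chi^2_i\lesssim\Delta^2\sum_\ell c_\ell^2\,\cI_\ell .
\]

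\textbf{Main obstacle.} Naively $\cI_\ell\le n$, which loses everything. We need the mass of $c_\ell^2$ to sit near degree $m$, so that $\sum_\ell c_\ell^2\cI_\ell\lesssim \cI_{m'}/m$ for a comparable $m'$. This fails: the coefficients $c_\ell$ are spread over degrees $2R+1,\dots,m+2R$, and low-degree $\cI_\ell$ can be as large as $n$. So the construction must change to a block near $m$: use $\cJ(m-W,W)$ with width $W\asymp m$, say $W=\lfloor m/2\rfloor$. Its coefficients, after $w_R$, lie in $[m/2-2R,\,m+2R]$, where $\cI_\ell\le\cI_{m/2-2R}$. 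With $\|g\|_{L_2}^2\asymp 1/m$ this yields
\[
    n\text{-sample }\chi^2\lesssim\Delta^2\,\cI_{\lceil m/2\rceil-2R}/m .
\]
Choosing $\Delta\asymp\min\{m^{-s},\sqrt{m/\cI_{m/2-2R}}\}$ and reindexing $m\mapsto 2m+4R$ (harmless for $m^{-s}$ up to constants, with $\rho$ shrunk as needed) gives the intermediate bound.

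The delicate points are the Hölder estimate for a block starting at degree $m/2$, which is the same Legendre-derivative-times-$w_R$ argument, and checking $q_{-,i}\ge c/(t_i+1)$. The latter follows from $p_{-}\ge1/2$. Finally, the boundary cases of the max-to-min conversion (variance dominant already at $m_0$, or bias dominant up to $\rho T$) are handled as in the homogeneous remark, using $\cI_{m_0}\ge n\lambda_{\cdot}$ bounded below only for large $t_i$. Where that fails, the small-$T$ bound $T^{-s}$ takes over.
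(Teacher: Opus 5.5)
\textbf{Overall.} Your two constructions are essentially the paper's. After the reindexing $M=2m+4R$, your block $\{\lceil M/2\rceil+1,\ldots,M\}$ becomes $\{m+2R+1,\ldots,2m+4R\}$, which is the paper's $\cJ(m+2R,m)$ up to width. The reasoning is also the same: every degree of $w_Rf$ exceeds $m$, so monotonicity gives $\sum_\ell\cI_\ell c_\ell^2\le\cI_m\|g\|_{L_2}^2\lesssim\cI_m/m$, and the H\"older norm is $\lesssim m^s$ because all degrees are $O(m)$. No shrinking of $\rho$ is needed. Nothing requires the block's degrees to stay below $T$, since degrees above $T$ simply have $\cI_\ell=0$. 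Shrinking $\rho$ would in fact require a stronger version of Assumption~\ref{assm:constants_hetero}.

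\textbf{The gap: the max-to-min conversion.} You list $\cI_{m_0}\asymp n$ as an input, but this is false under arbitrary heterogeneity. If $t_i<m_0$ for every $i$ except one observation with $t_i=T$, then $\cI_{m_0}=\lambda_{T,m_0}\le1$ regardless of $n$.

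Comparability of $\cI$ at neighbouring degrees is also false in general; the paper stresses that $\cI_{k-1}/\cI_k$ can be arbitrarily large. It is also not needed. When the first crossing $m_*$ exceeds $m_0$, you only use $b_{m_*-1}\le L_*$, $a_{m_*}\le L_*$, and $a_{m_*-1}\le2^sa_{m_*}$.

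Your fallback, that $T^{-s}$ ``takes over'' where the estimate on $\cI_{m_0}$ fails, is not valid. In the case $b_{m_0}\ge a_{m_0}$ you have $L_*\ge m_0^{-s}$, and you must show $U_*\le m_0^{-s}+\sqrt{m_0/\cI_{m_0}}\lesssim1$. If $\cI_{m_0}$ could be $o(1)$, the right-hand side of the theorem would be unbounded. But $\cR_{n,\mathbf t}\le p_{\max}$ trivially, via the estimator $\widehat p(u)\equiv0$. So no additional lower bound could rescue the argument; the theorem would simply be false.

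\textbf{The fix.} What you actually need is $\cI_{m_0}\gtrsim1$, and this is precisely the role of Assumption~\ref{assm:constants_hetero}. The observation with $t_i=T\ge m_0/\rho$ alone gives, by Lemma~\ref{lem:inverse_eig_ratio},
\[
\cI_{m_0}\ge\lambda_{T,m_0}\ge\exp\{-2\rho m_0/(1-\rho)\}.
\]
With this in place of $\cI_{m_0}\asymp n$, the conversion goes through as in Corollary~\ref{cor:lower_bound_bias_variance}. In the no-crossing case you use $\lfloor\rho T\rfloor\gtrsim T$, and that is where the $T^{-s}$ bound is genuinely needed.
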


The first term again describes the bias, and the second the variance in terms of the effective sample size at an admissible degree $m$. The result leverages the smooth perturbation framework in Section~\ref{sec:fundamental_limits};
under trial heterogeneity, $\cI_k$ replaces $n\lambda_{t,k}$ in the quadratic form governing the difference between the observed distributions. A caveat, however, is that $\cI_k$ now can change sharply across nearby degrees, so the homogeneous argument for controlling this quadratic form after multiplication by $w_R$ does not apply directly. We give the details for the modified perturbation construction in Appendix~\ref{sec:trial_heterogeneity_details}.

\paragraph{Attainability.} Following our earlier discussion on the Legendre coefficient estimator under heterogeneous trials, the lower bound in Theorem \ref{thm:heterogeneous_lower_bound} can be attained by replacing the homogeneous Legendre coefficient estimator \eqref{eq:coefficient_estimator} with its heterogeneous counterpart \eqref{eq:legendre_coeff_estimator_het}.

\begin{definition}[Legendre--Hahn series estimator under trial heterogeneity]
\label{def:heterogeneous_estimator}
Let $\eta$ be the regularization function from Definition
\ref{def:legendre_hahn_estimator}. For an integer
$1\leq m\leq\floor{T/2}$, we define the Legendre--Hahn series estimator under trial heterogeneity as
\[
    \widehat p_m^{\mathrm{het}}(q)
    \equiv
    \sum_{k=0}^{2m}
    \eta(k/m)\widehat\theta_k\phi_k(q),
    \qquad
    q\in[0,1],
\]
where $\widehat\theta_k$ is defined in \eqref{eq:legendre_coeff_estimator_het}.
\end{definition}

\begin{theorem}[Upper bound under trial heterogeneity]
\label{thm:heterogeneous_upper_bound}
Suppose that $p\in\Ht$ and $1\leq m\leq\floor{T/2}$. Then
\[
    \E_p\bigl|\widehat p_m^{\mathrm{het}}(u)-p(u)\bigr| \lesssim m^{-s} + \left\{\sum_{k=0}^{2m}\frac{\phi_k(u)^2}{\mathcal I_k}\right\}^{1/2}.
\]
\end{theorem}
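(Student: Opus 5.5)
We follow the homogeneous argument behind Theorem~\ref{thm:upper_bound} and bound the pointwise mean squared error by a bias--variance decomposition. The absolute risk then follows from $\E|Z|\leq(\E Z^2)^{1/2}$.

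\textbf{Mean and bias.} First we check that $\widehat p_m^{\mathrm{het}}$ has the same mean as in the homogeneous case. Since $2m\leq T$, every $k\leq 2m$ has $\cI_k>0$, because the observation with $t_i=T$ contributes $\lambda_{T,k}>0$. Each $\widehat\theta_k$ in \eqref{eq:legendre_coeff_estimator_het} is then well defined. By Proposition~\ref{prop:singular_system}, $\E_p\{\sqrt{\lambda_{t_i,k}}\psi_{k,t_i}(X_i)\}=\lambda_{t_i,k}\theta_k$ whenever $t_i\geq k$, and summing over such $i$ gives $\E_p\widehat\theta_k=\theta_k$. Hence $\E_p\widehat p_m^{\mathrm{het}}(u)=(V_mp)(u)$ with the same filtered operator $V_m$ as before. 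The bias bound $\norm{V_mp-p}_\infty\lesssim m^{-s}$ carries over verbatim. It uses Lemma~\ref{lem:filtered_legendre}, the reproduction of $\Pi_m$, and the Jackson-type estimate $\inf_{\pi\in\Pi_m}\norm{p-\pi}_\infty\lesssim m^{-s}$ for $p\in\Ht$.

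\textbf{Variance decomposition.} Write
\[
    \widehat p_m^{\mathrm{het}}(u)=\sum_{i=1}^n Y_i,
    \qquad
    Y_i\equiv\sum_{k\leq\min(2m,t_i)}\eta(k/m)\phi_k(u)\frac{\sqrt{\lambda_{t_i,k}}}{\cI_k}\psi_{k,t_i}(X_i).
\]
The $Y_i$ are independent, so $\Var_p\{\widehat p_m^{\mathrm{het}}(u)\}\leq\sum_i\E_pY_i^2$. We bound each term using $\Pp_p(X_i=x)\leq p_{\max}/(t_i+1)$ and the discrete orthonormality of $\{\psi_{k,t_i}\}_{k\leq t_i}$ under $\ip{\cdot}{\cdot}_{d,t_i}$. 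This gives
\[
    \E_pY_i^2\leq p_{\max}\sum_{k\leq\min(2m,t_i)}\eta(k/m)^2\phi_k(u)^2\frac{\lambda_{t_i,k}}{\cI_k^2}.
\]
Next we sum over $i$ and exchange the order of summation. Using $\sum_{i:t_i\geq k}\lambda_{t_i,k}=\cI_k$, this yields
\[
    \Var_p\{\widehat p_m^{\mathrm{het}}(u)\}\leq p_{\max}\sum_{k=0}^{2m}\eta(k/m)^2\frac{\phi_k(u)^2}{\cI_k}\leq p_{\max}\sum_{k=0}^{2m}\frac{\phi_k(u)^2}{\cI_k}.
\]

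\textbf{Main obstacle.} The only point requiring care is the orthonormality step inside the variance bound, because the Hahn polynomials now depend on $i$ through $t_i$. The argument works because each observation's cross terms vanish separately within its own orthonormal system on $\{0,\ldots,t_i\}$. No orthogonality across different $t_i$ is needed, since independence removes the cross-observation terms. Combining the bias and variance bounds and taking square roots gives the stated inequality.
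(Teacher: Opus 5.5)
Your proposal is correct and matches the paper's proof essentially step for step. Unbiasedness of each $\widehat\theta_k$ gives $\E_p\widehat p_m^{\mathrm{het}}=V_mp$, so the homogeneous bias bound transfers. The variance is bounded observation by observation via $\Pp_p(X_i=x)\leq p_{\max}/(t_i+1)$ and the orthonormality of $\{\psi_{k,t_i}\}$, then summed using $\sum_{i:t_i\geq k}\lambda_{t_i,k}=\cI_k$. Your extra check that $\cI_k\geq\lambda_{T,k}>0$ for $k\leq 2m\leq T$, so that every $\widehat\theta_k$ is well defined, is a detail the paper leaves implicit.
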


The bias bound remains unchanged because the coefficient estimates remain unbiased; the difference lies in the variance, where $\mathcal I_k^{-1}$ now replaces $(n\lambda_{t,k})^{-1}$.

\paragraph{Overall minimax rate.} Combining the upper bound with the lower bounds yields the following characterization for the minimax rates under trial heterogeneity.

\begin{corollary}[Minimax rate under trial heterogeneity]
\label{cor:heterogeneous_minimax_rate}
Fix $u\in[\delta,1-\delta]$. Suppose that $L,p_{\max}>1$ and Assumption \ref{assm:constants_hetero} holds. Then
\[
    \cR_{n,\mathbf t}
    \asymp
    \min_{m \in \cM_{\mathbf{t}}}
    \left\{
        m^{-s}
        +
        \sqrt{\frac{m}{\cI_m}}
    \right\}.
\]
\end{corollary}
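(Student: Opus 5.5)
The plan is to prove the two directions separately. The lower bound $\gtrsim$ is exactly Theorem~\ref{thm:heterogeneous_lower_bound}, whose hypotheses ($L,p_{\max}>1$ and Assumption~\ref{assm:constants_hetero}) coincide with those of the corollary. So all the work is in the upper bound. For that I will start from Theorem~\ref{thm:heterogeneous_upper_bound} and show that, for an appropriate degree, its variance term $\{\sum_{k\le 2m}\phi_k(u)^2/\cI_k\}^{1/2}$ is dominated by $\sqrt{m'/\cI_{m'}}$ for any given $m'\in\cM_{\mathbf t}$. Then I will minimize over $m'$.

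\textbf{Step 1: monotonicity of $\cI_k$.} For each fixed $t$ and $0\le k<t$,
\[
    \lambda_{t,k+1}=\lambda_{t,k}\,\frac{t-k}{t+k+2}\in[0,\lambda_{t,k}],
\]
and $\lambda_{t,k}=0$ for $k>t$. Hence $k\mapsto\lambda_{t_i,k}$ is nonincreasing for every $i$, and so is $k\mapsto\cI_k$. Moreover, for $k\le T$ the sum defining $\cI_k$ contains the term $\lambda_{T,k}>0$, so $\cI_k>0$ for all $k\le T$. These two facts give
\[
    \sum_{k=0}^{2m}\frac{\phi_k(u)^2}{\cI_k}\le \frac{1}{\cI_{2m}}\sum_{k=0}^{2m}\phi_k(u)^2 .
\]

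\textbf{Step 2: the Legendre sum.} I will bound $\sum_{k=0}^{2m}\phi_k(u)^2\lesssim m$ uniformly over $u\in[\delta,1-\delta]$. The most direct route is the classical interior bound $|P_k(v)|\le \{2/(\pi k(1-v^2))\}^{1/2}$ (Bernstein's inequality, \cite{szego75}). It gives $\phi_k(u)^2\lesssim (2k+1)/\{k\,u(1-u)\}\lesssim \delta^{-1}$ for $k\ge1$, with $\phi_0\equiv1$. Alternatively, I can cover $\{0,\dots,2m\}$ by $O(m/W_0+1)$ consecutive blocks of width $W_0$ and apply the upper half of Lemma~\ref{lem:legendre_block} to each block. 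Either way $\sum_{k\le 2m}\phi_k(u)^2\lesssim m+1\lesssim m$.

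\textbf{Step 3: choosing the degree and minimizing.} Fix $m'\in\cM_{\mathbf t}$ and set $m=\floor{m'/2}$. Since $m'\ge m_0\ge 4$, we have $m\ge1$ and $m\ge m'/4$. Also $2m\le m'\le \rho T\le T$, so $m\le\floor{T/2}$ and Theorem~\ref{thm:heterogeneous_upper_bound} applies. Combining it with Steps 1--2 and using $\cI_{2m}\ge\cI_{m'}>0$ (monotonicity together with $m'\le T$) gives
\[
    \E_p|\widehat p_m^{\mathrm{het}}(u)-p(u)|\lesssim m^{-s}+\sqrt{\frac{m}{\cI_{2m}}}\lesssim (m')^{-s}+\sqrt{\frac{m'}{\cI_{m'}}}
\]
uniformly over $p\in\Ht$. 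Taking the infimum over estimators and then the minimum over $m'\in\cM_{\mathbf t}$ yields the upper bound.

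I expect no serious obstacle. The only point requiring care is Step 2, where the bound on $\phi_k(u)^2$ must hold uniformly in both $k$ and the interior point $u$. Relying on the interior Legendre bound, or equivalently on Lemma~\ref{lem:legendre_block}, handles this. The rest is bookkeeping to check that the halved degree stays admissible ($1\le m\le\floor{T/2}$) and that $\cI_{m'}$ is positive, so the comparison via monotonicity is legitimate.
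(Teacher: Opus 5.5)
Your proposal is correct and follows essentially the same route as the paper: you quote the lower bound from Theorem~\ref{thm:heterogeneous_lower_bound}, and for the upper bound you apply Theorem~\ref{thm:heterogeneous_upper_bound} at the halved degree $\lfloor m'/2\rfloor$, then use monotonicity of $\mathcal I_k$ and the uniform interior bound on $\phi_k(u)^2$ to get $\sum_{k\le 2m}\phi_k(u)^2/\mathcal I_k\lesssim m'/\mathcal I_{m'}$. The differences are cosmetic: you pass through $\mathcal I_{2m}$ before comparing to $\mathcal I_{m'}$, and you explicitly check the monotonicity and positivity of $\mathcal I_k$, which the paper takes for granted.
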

 
The estimator with degree $\floor{m/2}$ has risk bounded by a constant multiple of $m^{-s}+\sqrt{m/\cI_m}$, hence matching the upper bound with the lower bound; taking the minimum over $m \in \cM_{\mathbf{t}}$ gives the result. 

Unlike in the homogeneous setting, the minimax rate depends on $(\cI_k)_{k=0}^{T}$ rather than on a single trial parameter; it therefore does not generally reduce to the three regimes in
Corollary~\ref{cor:simplified_minimax_rate}. We thus conclude this section by giving a more intuitive characterization of the minimax rate under a simple condition that the number of observations with large trial parameters decreases polynomially.

\begin{corollary}[Polynomially decreasing trial parameters]
\label{cor:polynomial_trial_profile}
Suppose that the conditions of Corollary~\ref{cor:heterogeneous_minimax_rate} hold and, for some fixed $\beta>0$,
\[
    t_{[i]}\asymp(n/i)^{1/\beta},
    \qquad i=1,\ldots,n,
\]
where $t_{[i]}$ denotes the $i$-th largest trial parameter. Then, for all sufficiently large $n$,
\[
    \cR_{n,\mathbf t}
    \asymp n^{-s/(2s+2\beta+1)}.
\]
\end{corollary}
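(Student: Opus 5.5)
We will derive the rate from Corollary~\ref{cor:heterogeneous_minimax_rate}: the minimax risk is of order $\min_{m\in\cM_{\mathbf t}}\{m^{-s}+\sqrt{m/\cI_m}\}$. So the whole task reduces to two steps. First, show that under the polynomial trial profile
\[
    \cI_m\asymp n\,m^{-2\beta}
\]
over the relevant range of $m$, with the upper bound holding for all $m\in\cM_{\mathbf t}$. Second, balance bias against variance.

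\textbf{Two-sided control of $\lambda_{t,m}$.}
For the upper bound, use the product formula in Lemma~\ref{lem:bernstein_durrmeyer_eigen} directly. Since $\log(1-x)\le -x$, for every $t\ge m$,
\[
    \log\lambda_{t,m}
    =\sum_{j<m}\log\Big(1-\tfrac{2j+2}{t+j+2}\Big)
    \le -\sum_{j<m}\frac{2j+2}{2t+2}
    \le -\frac{m^2}{4t},
\]
so $\lambda_{t,m}\le e^{-m^2/(4t)}$, and $\lambda_{t,m}=0$ for $t<m$. For the lower bound, use the remark that $\log\lambda_{t,k}^{-1}\asymp k^2/t$ for $1\le k\le\rho t$. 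This gives $\lambda_{t,m}\ge e^{-C}$ whenever $t\ge m^2$ (note $m^2\ge m/\rho$ once $m\ge m_0$ is large enough).

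\textbf{Evaluating $\cI_m$.}
The profile $t_{[i]}\asymp (n/i)^{1/\beta}$ means the number of indices with $t_i\ge\tau$ is of order $n\tau^{-\beta}$, up to constants and for $1\lesssim\tau\lesssim T\asymp n^{1/\beta}$.

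For the lower bound on $\cI_m$, keep only the indices with $t_i\ge m^2$. Each contributes at least $e^{-C}$, and there are $\gtrsim n m^{-2\beta}$ of them, provided $m^2\lesssim T$. Hence $\cI_m\gtrsim n m^{-2\beta}$.

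For the upper bound, split the indices dyadically by $t_i\in[2^{-j-1}m^2,2^{-j}m^2)$ for $j\ge0$, together with the group $t_i\ge m^2$. The group $j$ has at most $\lesssim n m^{-2\beta}2^{(j+1)\beta}$ members, each contributing at most $e^{-2^{j}/4}$. The top group contributes at most its size, $\lesssim n m^{-2\beta}$. Since $\sum_j 2^{j\beta}e^{-2^j/4}<\infty$, we get $\cI_m\lesssim n m^{-2\beta}$ for every $m$, including $m^2>T$. The main obstacle is making this counting rigorous from the two-sided $\asymp$ assumption on order statistics. It requires a little care with the implicit constants and with the ranges where $n\tau^{-\beta}<1$, where the count is simply zero.

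\textbf{Balancing.}
For the upper bound on the risk, choose $m\asymp n^{1/(2s+2\beta+1)}$. Then
\[
    m^{-s}\asymp\sqrt{m^{1+2\beta}/n}\asymp n^{-s/(2s+2\beta+1)} .
\]
We must check that this $m$ is admissible and that the lower bound on $\cI_m$ applies there, i.e.\ $m_0\le m\le\rho T$ and $m^2\lesssim T\asymp n^{1/\beta}$. This holds for large $n$ because $2/(2s+2\beta+1)<1/\beta$. For the lower bound on the risk, use $\cI_m\lesssim nm^{-2\beta}$ uniformly. This gives, for every $m\in\cM_{\mathbf t}$,
\[
    m^{-s}+\sqrt{m/\cI_m}\gtrsim m^{-s}+\sqrt{m^{1+2\beta}/n}\gtrsim n^{-s/(2s+2\beta+1)},
\]
where the last step minimizes over all real $m>0$. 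Corollary~\ref{cor:heterogeneous_minimax_rate} then gives $\cR_{n,\mathbf t}\asymp n^{-s/(2s+2\beta+1)}$.
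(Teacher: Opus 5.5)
Your proposal is correct and follows the paper's proof: you use the same bound $\lambda_{t,m}\le e^{-cm^2/t}$ to get $\cI_m\lesssim nm^{-2\beta}$ for every $m$, and the same count of the $\gtrsim nm^{-2\beta}$ observations with $t_i\gtrsim m^2$ to get the matching lower bound at $m\asymp n^{1/(2s+2\beta+1)}$. The only difference is cosmetic: where you sum dyadically over trial sizes, the paper substitutes $t_{[i]}\lesssim(n/i)^{1/\beta}$ into $\sum_i\exp\{-cm^2(i/n)^{1/\beta}\}$ and compares it with $n\int_0^\infty\exp(-cm^2x^{1/\beta})\,\dd x$; the counting step you flag as the main obstacle follows directly from the two-sided bound on the order statistics.
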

Here, larger $\beta$ means fewer observations with large trial parameters. Compared with the usual nonparametric rate $n^{-s/(2s+1)}$, the additional $2\beta$ can be interpreted as reflecting how the effective sample size decreases, as the degree needed to approximate the density increases.


\section{Discussion}
In this paper, we have characterized the minimax rates for pointwise estimation of a smooth binomial mixing density. We derived complementary intermediate $t$ and small $t$ lower bounds, and showed that the resulting lower bound is attained by a regularized orthogonal series estimator. We further extended the analysis to heterogeneous trial parameters, where the relevant notion of information becomes degree-specific and depends on the full set of trials.

There are several natural directions for future work. One is to incorporate covariates; as mentioned in \cite{lee2025learning}, if $C_i$ denotes the vector of covariates for unit $i$, one natural model to consider is
\[
   X_i \mid Q_i = q \sim \operatorname{Bin}(t_i, q), \qquad
   \operatorname{logit}(Q_i) \mid C_i=c
   \sim N(m(c), \sigma^2),
\]
following \cite{ignatiadis2019covariate}, and to study how assumptions on $m(\cdot)$ affect the attainable rates. Relatedly, \cite{chen2025compound} study compound estimation of heterogeneous binomial means and show how covariate-based machine-learning predictions can be incorporated into shrinkage estimators. It would be interesting to understand how covariate information can similarly be combined with the smooth population structure in our setting. 


A second intriguing question is what our results mean in the context of existing optimality results for binomial mixtures, and whether the analyses can be transported in either direction. For instance, one could consider the settings of \cite{tian2017learning,vinayak2019maximum} while additionally imposing smoothness on the mixing distribution, and study how smoothness changes the corresponding rates. Conversely, one could also study the performance of estimators developed for smooth density estimation under the distributional losses considered in that literature.

Finally, in some applications, such as ecological abundance models
\citep{royle2004n,wenger2008estimating,dail2011models,wu2015bayesian},
the binomial size may itself be a latent random variable. 
Understanding how uncertainty in the binomial size changes the minimax rates would be an interesting avenue for future work.

\paragraph{Acknowledgments.}
We thank Lucas Kania for helpful comments during the development of this work. The lead author used ChatGPT to identify relevant literature, assist with mathematical proofs, and improve the exposition of this manuscript. For example, GPT 5.5 was used to locate literature concerning Bernstein–Durrmeyer operators and to assist with calculations in some of the technical lemmas. The manuscript was prepared through an iterative process in which the author first drafted the text, used GPT to check for potential errors and suggest improvements, and then reviewed the suggestions and made all final decisions. We welcome any further questions regarding the role of AI in this work.

\bibliography{references}
\appendix

\section{Fundamental limits under trial heterogeneity}
\label{sec:trial_heterogeneity_details}

For consistency with the analysis under trial homogeneity, we keep the terms ``intermediate $t$ lower bound'' and ``small $t$ lower bound''. The former is obtained by controlling the divergence between the observed product distributions, and the latter makes those
distributions exactly identical. Importantly, these terms distinguish the two lower-bound constructions, not separate regimes of the heterogeneous trials.

\paragraph{Extended perturbation framework.}
We begin by extending the framework in Section~\ref{sec:shared_framework} to heterogeneous trials. Define the \emph{aggregate} Bernstein--Durrmeyer operator as
\[
    \cBD_{\mathbf t}
    \equiv
    \sum_{i=1}^n \cBD_{t_i},
\]
where $\cBD_{t_i}$ is Bernstein--Durrmeyer operator for observation $i$. 
Much of the homogeneous analysis continues to hold; 
first, $\cBD_{\mathbf t}$ is diagonalized by the same basis.

\begin{proposition}
\label{prop:aggregate_eigendecomposition}
For every $k\geq0$, $\cBD_{\mathbf t}\phi_k=\cI_k\phi_k$.
\end{proposition}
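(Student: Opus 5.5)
The plan is to reduce the statement to the homogeneous eigen-relation in Lemma~\ref{lem:bernstein_durrmeyer_eigen}, applied observation by observation, and then sum using linearity of $\cBD_{\mathbf t}=\sum_{i=1}^n\cBD_{t_i}$. This gives
\[
    \cBD_{\mathbf t}\phi_k=\sum_{i=1}^n\cBD_{t_i}\phi_k .
\]
It therefore suffices to show, for every $t\in\mathbb N$ and every $k\geq0$, that $\cBD_t\phi_k=\lambda_{t,k}\phi_k$. Here $\lambda_{t,k}$ is the product in Lemma~\ref{lem:bernstein_durrmeyer_eigen}, interpreted as $0$ when $k>t$. Summing then yields $\sum_i\lambda_{t_i,k}\phi_k=\cI_k\phi_k$, matching \eqref{eq:aggregate_information}, including the convention $\cI_0=n$ since $\lambda_{t,0}=1$.

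\textbf{Case $k\leq t$.} This is exactly Lemma~\ref{lem:bernstein_durrmeyer_eigen}.

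\textbf{Case $k>t$.} The product $\prod_{j=0}^{k-1}(t-j)/(t+j+2)$ contains the factor $j=t$, which equals $0$, so $\lambda_{t,k}=0$ and consistency with the definition of $\cI_k$ is automatic. It remains to show $\cBD_t\phi_k=0$. Each Bernstein basis function $b_{t,x}$ is a polynomial of degree at most $t<k$, and $\phi_k$ is $L_2$-orthogonal to $\Pi_{k-1}$. Hence
\[
    (\cB_t\phi_k)(x)=\int_0^1 b_{t,x}(q)\phi_k(q)\dd q=0 \qquad \text{for all } x=0,\ldots,t,
\]
and so $\cBD_t\phi_k=(t+1)\sum_x b_{t,x}(\cB_t\phi_k)(x)=0$. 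This is the same orthogonality fact already used in the small $t$ lower bound, where $\cB_tg_{\cJ_{\sml}}=0$.

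There is no real obstacle in this argument. The only point requiring care is the bookkeeping for degrees $k>t_i$: one must check that the product formula for $\lambda_{t_i,k}$ vanishes there, so that the sum over all $i$ agrees with the restricted sum over $\{i:t_i\geq k\}$ in \eqref{eq:aggregate_information}. That check is the vanishing factor noted in the second case.
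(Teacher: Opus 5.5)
Your proposal is correct and matches the paper's proof: both apply Lemma~\ref{lem:bernstein_durrmeyer_eigen} when $k\leq t_i$, show $\cB_{t_i}\phi_k=0$ by orthogonality of $\phi_k$ to $\Pi_{t_i}$ when $k>t_i$, and sum over $i$. Your explicit check that the product formula vanishes for $k>t$, because of the factor $j=t$, is a small addition; the paper simply uses $\lambda_{t_i,k}=0$ there.
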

Hence, $\cI_k$ that measures the effective sample size for estimating the $k$-th Legendre coefficient also appears as the eigenvalue of the aggregate operator. In the homogeneous case, where $t_i=t$ for every $i$, we recover
\[
    \cBD_{\mathbf t}=n\cBD_t,
    \qquad
    \cI_k=n\lambda_{t,k},
\]
so Proposition~\ref{prop:aggregate_eigendecomposition} reduces directly to Lemma~\ref{lem:bernstein_durrmeyer_eigen}.

We now leverage Proposition~\ref{prop:aggregate_eigendecomposition} to extend the intermediate $t$ and small $t$ lower-bound constructions. If we consider analogous perturbed densities as in Section \ref{sec:shared_framework}, the corresponding probability mass functions for observation $i$ are
\[
    q_{i,\pm,\Delta}(x) = \frac{1}{t_i+1} \pm \Delta(\cB_{t_i}g)(x), \qquad x=0,\ldots,t_i.
\]
Therefore, we have
\[
    \chi^2(q_{i,+,\Delta},q_{i,-,\Delta}) \lesssim \Delta^2(t_i+1) \sum_{x=0}^{t_i}(\cB_{t_i}g)(x)^2 = \Delta^2\ip{g}{\cBD_{t_i}g}_{L_2},
\]
whenever $p_{-,\Delta}\geq c_0$. Since the observations are still independent, tensorization yields
\[
    \chi^2\left(\bigotimes_{i=1}^n q_{i,+,\Delta}, \bigotimes_{i=1}^n q_{i,-,\Delta}\right) \leq \exp\left\{C\Delta^2\sum_{i=1}^n \ip{g}{\cBD_{t_i}g}_{L_2}\right\}-1 = \exp\left\{C\Delta^2\ip{g}{\cBD_{\mathbf t}g}_{L_2}\right\}-1.
\]
Hence, under trial heterogeneity, the corresponding quadratic form governing the $\chi^2$ divergence is
$\ip{g}{\cBD_{\mathbf t}g}_{L_2}$. Proposition \ref{prop:aggregate_eigendecomposition} then implies that, for $g$ in \eqref{eq:g},
\[
    \ip{g}{\cBD_{\mathbf t}g}_{L_2} = \sum_{k=0}^m \cI_k a_k^2.
\]
Thus the optimal perturbation satisfying $g(u)=1$ is given by
\[
    g^\star_{\mathrm{het}}(q) = \frac{\sum_{k=0}^m\cI_k^{-1}\phi_k(u)\phi_k(q)}{\sum_{k=0}^m\cI_k^{-1}\phi_k(u)^2},
\]
whenever $m\leq T$.

This again suggests natural strategies for both lower bounds. For the small $t$ lower bound, since $\cI_k=0$ for every $k>T$, the terms above degree $T$ are completely unobservable from the heterogeneous sample. Thus placing the perturbation sufficiently far above degree $T$ yields the desired bound.

For the intermediate $t$ lower bound, the form of $g^\star_{\mathrm{het}}$ suggests a direct analogue of the homogeneous construction where one replaces $\lambda_{t,k}^{-1}$ with $\cI_k^{-1}$ in
\eqref{eq:pert_stat}. As we show next, however, this construction need not remain well-controlled after multiplication by $w_R$. We therefore develop an alternative construction that sacrifices exact optimality but still remains valid under arbitrary trial heterogeneity.

\paragraph{Intermediate $t$ lower bound.} 
Recall that the intermediate $t$ lower bound under trial homogeneity relied on (local) comparability of the eigenvalues across the degree shifts introduced by $w_R$ (Lemma~\ref{lem:operator_bound}). The analogous argument under trial heterogeneity would require 
\[ 
    \cI_\ell\lesssim\cI_k, \qquad |\ell-k|\leq2R.
\] 
This, however, need not hold under arbitrary trial heterogeneity. For example, if many observations have $t_i=k-1$, then they contribute to $\cI_{k-1}$ but nothing to $\cI_k$, so $\cI_{k-1}/\cI_k$ can be arbitrarily large. Multiplication by $w_R$ can therefore move a component to a nearby lower degree with a much larger effective sample size, causing $\ip{g}{\cBD_{\mathbf t}g}_{L_2}$ to blow up. 

We avoid this difficulty by instead turning to a bound that requires only the monotonicity of $\cI_k$. Suppose that
\[
    f(q) \equiv \sum_{k\in\cJ}a_k\phi_k(q),
\]
is supported on a block $\cJ$ chosen so that every degree appearing in $w_Rf$ is strictly larger than $m$. Writing
\[
    w_Rf = \sum_{\ell>m}c_\ell\phi_\ell,
\] 
Proposition \ref{prop:aggregate_eigendecomposition} and the monotonicity of $\cI_k$ give
\[
    \ip{w_Rf}{\cBD_{\mathbf t}(w_Rf)}_{L_2}
    = \sum_{\ell>m}\cI_\ell c_\ell^2
    \leq\cI_m\sum_{\ell>m}c_\ell^2
    = \cI_m\norm{w_Rf}_{L_2}^2
    \lesssim \cI_m\norm{f}_{L_2}^2.
\]
Thus, after shifting the block sufficiently far above degree $m$, the quadratic form is controlled by $\cI_m$ and the $L_2$ norm of $f$. We therefore seek a polynomial with small $L_2$ norm subject to $f(u)=1$.

For a fixed block $\cJ$, Cauchy--Schwarz gives
\[
    1 = \bigg\{\sum_{k\in\cJ}a_k\phi_k(u)\bigg\}^2
    \leq \bigg\{\sum_{k\in\cJ}a_k^2\bigg\}\bigg\{\sum_{k\in\cJ}\phi_k(u)^2\bigg\},
\]
with equality when $a_k$ is proportional to $\phi_k(u)$. Hence, among polynomials supported on $\cJ$ and equal to one at $u$, 
\[
    \frac{\sum_{k\in\cJ}\phi_k(u)\phi_k(q)}{\sum_{k\in\cJ}\phi_k(u)^2},
\]
has the smallest $L_2$ norm.\footnote{This unweighted construction also applies to homogeneous trials. We use the inverse-eigenvalue weighting there because it is optimal before multiplication by $w_R$, after which the quadratic form remains bounded by a constant multiple of $S_m(u)^{-1}$.}
We therefore choose
\[
    g_{\cJ_{\intm}^{\mathrm{het}}}(q)
    \equiv w_R(q) \frac{\sum_{k\in\cJ_{\intm}^{\mathrm{het}}}\phi_k(u)\phi_k(q)}{\sum_{k\in\cJ_{\intm}^{\mathrm{het}}}\phi_k(u)^2},
\]
where 
\[
    \cJ_{\intm}^{\mathrm{het}} \equiv \cJ(m+2R,m) = \{m+2R+1,\ldots,2m+2R\}, \qquad m \in \cM_{\mathbf{t}}.
\]
The block has width $m$, so Lemma \ref{lem:legendre_block} gives
\[
    \sum_{k\in\cJ_{\intm}^{\mathrm{het}}} \phi_k(u)^2 \asymp m.
\]
Moreover, all degrees are of order $m$, which yields the smoothness scale $m^s$. The lower endpoint $m+2R+1$ is chosen so that, after multiplication by $w_R$, every resulting degree remains strictly larger than $m$. Hence, by monotonicity, every such degree has effective sample size at most $\cI_m$.

Now the following two lemmas help characterize the largest useful amplitude.

\begin{lemma}
\label{lem:heterogeneous_perturbation}
Suppose Assumption~\ref{assm:constants_hetero} holds. Then, for every $m \in \cM_{\mathbf{t}}$, uniformly over $u\in[\delta,1-\delta]$,
\[
    \|g_{\cJ_{\intm}^{\mathrm{het}}}\|_{C^s([0,1])}
    \lesssim
    m^s.
\]
\end{lemma}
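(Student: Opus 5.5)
We follow the same route as Lemma~\ref{lem:holder_bound} (and Lemma~\ref{lem:trial_perturbation}). Write $g_{\cJ_{\intm}^{\mathrm{het}}}=w_R h/D$, where
\[
    h(q)\equiv\sum_{k\in\cJ_{\intm}^{\mathrm{het}}}\phi_k(u)\phi_k(q),
    \qquad
    D\equiv\sum_{k\in\cJ_{\intm}^{\mathrm{het}}}\phi_k(u)^2 .
\]
The block has width $m\geq m_0\geq W_0$, so Lemma~\ref{lem:legendre_block} gives $D\asymp m$ uniformly in $u\in[\delta,1-\delta]$. It therefore suffices to show
\[
    \norm{(w_Rh)^{(j)}}_\infty\lesssim m^{j+1},\qquad j=0,\ldots,r+1.
\]
The Hölder seminorm of the $r$-th derivative then follows by interpolation: for $|q-q'|\leq 1/m$ we use the mean value theorem with the $(r+1)$-st derivative bound, and for $|q-q'|>1/m$ we use twice the $r$-th derivative bound. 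Both cases give $[g^{(r)}]_{C^\alpha}\lesssim m^{r+\alpha}=m^s$, and summing over $j\le r$ gives $\norm{g}_{C^s}\lesssim m^s$.

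For the derivative bounds, apply Leibniz: $(w_Rh)^{(j)}=\sum_{i\le j}\binom{j}{i}w_R^{(j-i)}h^{(i)}$. Since $R\geq r+2>j$, $w_R^{(j-i)}$ is a polynomial with fixed coefficients, bounded by $[q(1-q)]^{R-(j-i)}\lesssim [q(1-q)]^{R-j}$ up to constants depending on $u$ only through $u(1-u)\geq\delta(1-\delta)/2$. So it suffices to bound the weighted derivatives
\[
    [q(1-q)]^{R-j}\,\abs{h^{(i)}(q)}\lesssim m^{j+1},\qquad i\le j .
\]
For the coefficients, note that $|\phi_k(u)|\lesssim 1$ uniformly on interior points by the classical bound $|P_k(\cos\theta)|\lesssim (k\sin\theta)^{-1/2}$. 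For the derivatives of Legendre polynomials there are two estimates: the global Markov-type bound $\norm{\phi_k^{(i)}}_\infty\lesssim k^{2i+1/2}$ (from $P_k^{(i)}(1)\asymp k^{2i}$), and the Bernstein-type interior bound
\[
    [q(1-q)]^{i/2+1/4}\abs{\phi_k^{(i)}(q)}\lesssim k^{i},
\]
obtained from the Gegenbauer representation of $P_k^{(i)}$ and the Szegő weighted bounds cited earlier.

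We then split by location. In the region $q(1-q)\geq m^{-2}$, the interior bound gives $|h^{(i)}(q)|\lesssim\sum_{k\le 2m+2R}k^{i}[q(1-q)]^{-i/2-1/4}$. Multiplying by the weight $[q(1-q)]^{R-j}$ and using $R-j\ge 1\geq i/2+1/4$ for $i\le j\le r+1$ gives the bound $m\cdot m^{i}\le m^{j+1}$. Near the endpoints, where $q(1-q)<m^{-2}$, we use the global bound, $|h^{(i)}|\lesssim m\cdot m^{2i+1/2}$, together with $[q(1-q)]^{R-j}\le m^{-2(R-j)}$. Since $R-j\geq 1$ and $i\le j$, the result is at most $m^{2i+3/2-2R+2j}$. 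This is at most $m^{j+1}$ provided $2R\geq 2i+j+1/2$, and here $R\ge r+2$ alone may be insufficient when $i=j=r+1$.

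This endpoint bookkeeping is the main obstacle. A crude triangle inequality over the $m$ terms costs a factor $m$, which the normalization $D\asymp m$ exactly absorbs, but the endpoint exponents are tight. If the direct count fails, I would first try to exploit the endpoint behavior of $h$ itself rather than summing term by term. By the Christoffel--Darboux formula, $h$ is a difference of two reproducing kernels,
\[
    h=K_{2m+2R}(u,\cdot)-K_{m+2R}(u,\cdot),
\]
and each kernel is bounded by $\lesssim m/(1+m|q-u|)$ away from $u$, with its derivatives gaining only a factor $m$ each in the Bernstein-weighted sense. Near the endpoints $|q-u|\geq\delta/2$, so this kernel decay supplies the missing powers. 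The remaining argument is exactly the one behind Lemma~\ref{lem:holder_bound}, with the block shifted to $\{m+2R+1,\ldots,2m+2R\}$. All degrees there are of order $m$, so none of the estimates depend on $\cI_k$ or on $\mathbf t$, and the bound is uniform over $m\in\cM_{\mathbf t}$ and $u\in[\delta,1-\delta]$.
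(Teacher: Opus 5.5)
\textbf{The gap is in your reduction step.} You correctly note $|w_R^{(j-i)}(q)|\lesssim\{q(1-q)\}^{R-j+i}$, but you then weaken this to $\{q(1-q)\}^{R-j}$. That throws away exactly the factor $\{q(1-q)\}^{i}$ that offsets the endpoint growth of $h^{(i)}$.

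The reduced inequality $\{q(1-q)\}^{R-j}|h^{(i)}(q)|\lesssim m^{j+1}$ is not true in general. The Christoffel--Darboux representation you invoke shows that $|h^{(i)}(q)|$ is genuinely of order $m^{2i+1/2}$ for $q\leq cm^{-2}$ (for instance at $u=1/2$). Take $i=j=r+1$ and $R=r+2$. Then the left side at $q\asymp m^{-2}$ is of order $m^{2r+1/2}$, which exceeds $m^{r+2}$ once $r\geq2$. So no sharper control of $h$, kernel decay included, can rescue the argument as you have set it up.

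Two smaller errors compound this:
\begin{itemize}
\item Your interior step uses $1\geq i/2+1/4$, which fails for every $i\geq2$.
\item The kernel bound $m/(1+m|q-u|)$ is not valid near the endpoints. There $K_n(u,\cdot)$ can be of order $\sqrt m$, because $|\phi_n(0)|=\sqrt{2n+1}$.
\end{itemize}

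\textbf{The fix is to keep the full weight.} Combine $|w_R^{(j-i)}(q)|\lesssim\{q(1-q)\}^{R-j+i}$ with the weighted bound $\{q(1-q)\}^{i/2+1/4}|\phi_k^{(i)}(q)|\lesssim k^{i}$. This bound holds on all of $[0,1]$, not only where $q(1-q)\geq m^{-2}$: in Szeg\H{o}'s Theorem 7.32.4, the range $\theta\leq c/k$ contributes $\theta^{i+1/2}k^{2i}\lesssim k^{i-1/2}$.

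Then $|w_R^{(j-i)}\phi_k^{(i)}|\lesssim k^{i}\{q(1-q)\}^{R-j+i/2-1/4}\lesssim k^{i}$, because $R-j+i/2-1/4\geq 3/4$. This is exactly the tapered estimate \eqref{eq:proof_tapered_legendre_derivative}, $\|(w_R\phi_k)^{(j)}\|_\infty\lesssim k^{j}$, from the proof of Lemma~\ref{lem:holder_bound}.

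Summing term by term with $|\phi_k(u)|\lesssim1$ and dividing by $D\asymp m$ gives $\|g^{(j)}\|_\infty\lesssim m^{j}$ directly. No region split and no cancellation argument are needed. Together with your normalization and interpolation steps, this is the paper's proof.

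Your two-region split would also work once the weight is kept. Near the endpoints the Markov bound then gives $m\cdot m^{2i+1/2}\cdot m^{-2(R-j+i)}\leq m^{-1/2}$.
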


\begin{lemma}
\label{lem:heterogeneous_operator_bound}
Suppose Assumption~\ref{assm:constants_hetero} holds. Then, for every $m \in \cM_{\mathbf{t}}$, uniformly over $u\in[\delta,1-\delta]$,
\[
    \langle g_{\cJ_{\intm}^{\mathrm{het}}}, \cBD_{\mathbf t}
        g_{\cJ_{\intm}^{\mathrm{het}}}\rangle_{L_2}
    \lesssim
    \frac{\cI_m}{m}.
\]
\end{lemma}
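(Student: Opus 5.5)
Write $f\equiv\sum_{k\in\cJ_{\intm}^{\mathrm{het}}}\phi_k(u)\phi_k/\sum_{k\in\cJ_{\intm}^{\mathrm{het}}}\phi_k(u)^2$, so that $g_{\cJ_{\intm}^{\mathrm{het}}}=w_Rf$. The plan follows the monotonicity argument sketched just before Lemma~\ref{lem:heterogeneous_perturbation}, and has three steps.

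\textbf{Step 1: locate the degrees of $w_Rf$.} The polynomial $w_R$ has degree $2R$, so by the three-term structure of Legendre products, $w_R\phi_k$ lies in $\operatorname{span}\{\phi_\ell: |\ell-k|\le 2R\}$. This follows from orthogonality: $\ip{w_R\phi_k}{\phi_\ell}_{L_2}=0$ whenever $\ell<k-2R$, because then $w_R\phi_\ell$ has degree less than $k$. Every $k\in\cJ_{\intm}^{\mathrm{het}}$ satisfies $k\ge m+2R+1$, so every degree $\ell$ appearing in $w_Rf$ satisfies $\ell\ge m+1>m$. We may therefore write $w_Rf=\sum_{\ell>m}c_\ell\phi_\ell$, a finite sum.

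\textbf{Step 2: bound the quadratic form by $\cI_m$.} By Proposition~\ref{prop:aggregate_eigendecomposition},
\[
    \ip{w_Rf}{\cBD_{\mathbf t}(w_Rf)}_{L_2}=\sum_{\ell>m}\cI_\ell c_\ell^2 .
\]
We need $\cI_\ell\le\cI_m$ for $\ell>m$. This holds termwise, since $\lambda_{t,k+1}/\lambda_{t,k}=(t-k)/(t+k+2)\le 1$ and $\lambda_{t,\ell}=0$ for $\ell>t$; here we use that the product formula contains the factor $t-t=0$, or equivalently that $b_{t,x}\in\Pi_t$ is orthogonal to $\phi_\ell$. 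Summing over $i$ gives $\cI_\ell\le\cI_m$. Hence the form is at most
\[
    \cI_m\norm{w_Rf}_{L_2}^2\le\cI_m\norm{w_R}_\infty^2\norm{f}_{L_2}^2 .
\]
For $u\in[\delta,1-\delta]$,
\[
    \norm{w_R}_\infty\le\{4u(1-u)\}^{-R}\le\{4\delta(1-\delta)\}^{-R},
\]
which is a constant.

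\textbf{Step 3: compute $\norm{f}_{L_2}^2$.} By orthonormality of $\{\phi_k\}$ in $L_2$,
\[
    \norm{f}_{L_2}^2=\Big\{\sum_{k\in\cJ_{\intm}^{\mathrm{het}}}\phi_k(u)^2\Big\}^{-1}.
\]
The block $\cJ(m+2R,m)$ has width $m\ge m_0\ge W_0$, so Lemma~\ref{lem:legendre_block} gives $\sum_{k\in\cJ_{\intm}^{\mathrm{het}}}\phi_k(u)^2\gtrsim m$ uniformly in $u$ and in the offset $m+2R$. Therefore $\norm{f}_{L_2}^2\lesssim 1/m$, and combining with Step 2 gives the bound $\cI_m/m$.

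The only step needing care is Step 1, namely checking the degree-shift claim so that no component falls to degree $\le m$. It is immediate from orthogonality once stated as above. Everything else is routine, and the uniformity over the offset in Lemma~\ref{lem:legendre_block} is exactly what lets the lower bound hold for every $m\in\cM_{\mathbf t}$. Assumption~\ref{assm:constants_hetero} is used only to ensure $\cM_{\mathbf t}\neq\emptyset$ and $m\ge W_0$.
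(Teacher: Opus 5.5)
Your proposal is correct and follows the paper's proof essentially step for step. The degree-shift argument places every Legendre component of $w_Rf$ above degree $m$. Proposition~\ref{prop:aggregate_eigendecomposition} together with monotonicity of $\cI_\ell$ then bounds the form by $\cI_m\|w_R\|_\infty^2\|f\|_{L_2}^2$, and orthonormality plus Lemma~\ref{lem:legendre_block} gives $\|f\|_{L_2}^2\lesssim 1/m$. The only difference is that you check the banded structure of multiplication by $w_R$ and the monotonicity of $\cI_k$ explicitly, whereas the paper cites them from earlier arguments.
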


To see why Lemma \ref{lem:heterogeneous_operator_bound} holds, note that orthonormality implies
\[
    \left\|
        \frac{
            \sum_{k\in\cJ_{\intm}^{\mathrm{het}}}
            \phi_k(u)\phi_k
        }{
            \sum_{k\in\cJ_{\intm}^{\mathrm{het}}}
            \phi_k(u)^2
        }
    \right\|_{L_2}^2
    =
    \bigg\{
        \sum_{k\in\cJ_{\intm}^{\mathrm{het}}}
        \phi_k(u)^2
    \bigg\}^{-1}
    \asymp
    \frac{1}{m}.
\]
Since every degree after multiplication by $w_R$ is larger than $m$, we have from monotonicity of $\cI_m$ that
\[
    \langle g_{\cJ_{\intm}^{\mathrm{het}}}, \cBD_{\mathbf t}
        g_{\cJ_{\intm}^{\mathrm{het}}}\rangle_{L_2}
    \lesssim
    \cI_m
    \left\|
        \frac{
            \sum_{k\in\cJ_{\intm}^{\mathrm{het}}}
            \phi_k(u)\phi_k
        }{
            \sum_{k\in\cJ_{\intm}^{\mathrm{het}}}
            \phi_k(u)^2
        }
    \right\|_{L_2}^2
    \lesssim
    \frac{\cI_m}{m}.
\]

We can thus define the perturbed densities as follows.

\begin{definition}
For a sufficiently small constant $c>0$, the perturbed densities for the intermediate $t$ lower bound under trial heterogeneity are
\[
    p_{\pm,\Delta_m^{\mathrm{het}}}(q)
    \equiv
    1\pm
    \Delta_m^{\mathrm{het}}g_{\cJ_{\intm}^{\mathrm{het}}}(q), \qquad \Delta_m^{\mathrm{het}}
    \equiv
    c\min\left\{m^{-s}, \sqrt{\frac{m}{\cI_m}}\right\}.
\] 
\end{definition}

By Lemma \ref{lem:heterogeneous_perturbation}, the first term in $\Delta_m^{\mathrm{het}}$ ensures that $p_{\pm,\Delta_m^{\mathrm{het}}}\in\Ht$ for sufficiently small $c > 0$. 
The second term helps as
\[
    \sum_{i=1}^n
    \chi^2(q_{i,m,+},q_{i,m,-})
    \lesssim
    (\Delta_m^{\mathrm{het}})^2
    \frac{\cI_m}{m}
    \lesssim c^2,
\]
and the product divergence is again bounded by $\exp(Cc^2)-1$. The following corollary gives the intermediate $t$ lower bound under trial heterogeneity. 

\begin{corollary}[Intermediate $t$ lower bound under trial heterogeneity]
\label{cor:heterogeneous_statistical_lower_bound}
Fix $u\in[\delta,1-\delta]$. Suppose that $L,p_{\max}>1$ and Assumption \ref{assm:constants_hetero} holds. Then 
\[
    \cR_{n,\mathbf t}
    \gtrsim
    \max_{m \in \cM_{\mathbf{t}}}
    \min\left\{
        m^{-s},
        \sqrt{\frac{m}{\cI_m}}
    \right\}.
\]
\end{corollary}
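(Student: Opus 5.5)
The plan is to apply Le Cam's two-point method \citep[Theorem 2.2]{tsybakov2008introduction} to the pair $p_{\pm,\Delta_m^{\mathrm{het}}}$, separately for each fixed $m\in\cM_{\mathbf t}$, and then take the maximum over $m$. Since the construction has already been assembled, most of the argument is bookkeeping of the facts established above.

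First, fix $m\in\cM_{\mathbf t}$ and check that the two hypotheses are admissible.
\begin{enumerate}[label=(\roman*)]
\item \emph{Zero integral.} The block $\cJ_{\intm}^{\mathrm{het}}$ starts at degree $m+2R+1>2R$. Because $w_R\in\Pi_{2R}$ and the Legendre polynomials are orthogonal, $\int_0^1 g_{\cJ_{\intm}^{\mathrm{het}}}=0$.
\item \emph{Normalization.} $w_R(u)=1$ gives $g_{\cJ_{\intm}^{\mathrm{het}}}(u)=1$.
\item \emph{H\"older class.} By Lemma~\ref{lem:heterogeneous_perturbation}, $\Delta_m^{\mathrm{het}}\|g_{\cJ_{\intm}^{\mathrm{het}}}\|_{C^s}\lesssim c$. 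Since $\|p_0\|_{C^s}=\|p_0\|_\infty=1<\min\{L,p_{\max}\}$, both $p_{\pm}$ lie in $\Ht$ for $c$ small.
\item \emph{Lower bound on the density.} The same smallness gives $p_{-}\ge 1/2$, so $q_{i,-}(x)\ge 1/\{2(t_i+1)\}$ for every $i$ and $x$.
\end{enumerate}

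Second, bound the divergence. For each $i$,
\[
\chi^2(q_{i,+},q_{i,-})\le 8(\Delta_m^{\mathrm{het}})^2\ip{g}{\cBD_{t_i}g}_{L_2}.
\]
Summing over $i$ and applying Lemma~\ref{lem:heterogeneous_operator_bound} gives
\[
\sum_i\chi^2(q_{i,+},q_{i,-})\lesssim (\Delta_m^{\mathrm{het}})^2\,\frac{\cI_m}{m}\le c^2 .
\]
Tensorization for independent, non-identical factors, $1+\chi^2(\otimes P_i,\otimes Q_i)=\prod_i\{1+\chi^2(P_i,Q_i)\}$, then bounds the product divergence by $\exp(Cc^2)-1$. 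This is at most a fixed small constant once $c$ is small. Le Cam's lemma with separation $|p_+(u)-p_-(u)|=2\Delta_m^{\mathrm{het}}$ yields $\cR_{n,\mathbf t}\gtrsim\Delta_m^{\mathrm{het}}$.

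The implicit constant depends only on $c$, $s$, $\delta$, $R$, $W_0$ and $p_{\max}$, and not on $m$ or $\mathbf t$. This uniformity is guaranteed by Lemmas~\ref{lem:heterogeneous_perturbation}--\ref{lem:heterogeneous_operator_bound}. Maximizing over $m\in\cM_{\mathbf t}$, which is nonempty by Assumption~\ref{assm:constants_hetero}, gives the stated bound.

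The only delicate point is Lemma~\ref{lem:heterogeneous_operator_bound}, namely that $\ip{w_Rf}{\cBD_{\mathbf t}(w_Rf)}_{L_2}\le\cI_m\|w_Rf\|_{L_2}^2$. Two facts are needed, and I would verify both explicitly.
\begin{itemize}
\item \emph{The multiplied function only has degrees above $m$.} Every Legendre degree appearing in $w_Rf$ is at least $m+1$. This holds because multiplying by $w_R$ lowers degrees by at most $2R$.
\item \emph{$\cI_k$ is nonincreasing in $k$.} This follows since each factor $(t-j)/(t+j+2)\in[0,1)$, so every $\lambda_{t_i,k}$ is nonincreasing in $k$.
\end{itemize}
The remaining estimate is $\|w_R f\|_{L_2}\le\|w_R\|_\infty\|f\|_{L_2}$, where $\|w_R\|_\infty\le\{4u(1-u)\}^{-R}\lesssim_\delta 1$.
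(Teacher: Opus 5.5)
Your proposal is correct and follows the paper's proof essentially step for step. The paper also gets membership in $\Ht$ and $p_-\ge 1/2$ from Lemma~\ref{lem:heterogeneous_perturbation}, sums the per-observation $\chi^2$ bounds into $\ip{g}{\cBD_{\mathbf t}g}_{L_2}\lesssim \cI_m/m$ via Lemma~\ref{lem:heterogeneous_operator_bound}, tensorizes, applies Le Cam with separation $2\Delta_m^{\mathrm{het}}$, and maximizes over $m\in\cM_{\mathbf t}$; your check of the operator bound (degrees pushed above $m$, monotonicity of $\cI_k$, $\|w_R\|_\infty\lesssim_\delta 1$) is likewise the paper's argument. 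The only nit is that the factor $(t-j)/(t+j+2)$ lies in $[0,1)$ only for $j\le t_i$; for $k>t_i$, monotonicity instead comes from $\lambda_{t_i,k}=0$.
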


\paragraph{Small $t$ lower bound.} 
Since every Bernstein basis function now has degree at most $T$, we can place the perturbation above degree $T+2R$ so that every degree remaining after multiplication by $w_R$ is strictly larger than $T$. We thus choose
\[
    g_{\cJ_{\sml}^{\mathrm{het}}}(q)
    \equiv
    w_R(q)
    \frac{
        \sum_{k\in\cJ_{\sml}^{\mathrm{het}}}
        \phi_k(u)\phi_k(q)
    }{
        \sum_{k\in\cJ_{\sml}^{\mathrm{het}}}
        \phi_k(u)^2
    },
\]
where
\[
    \cJ_{\sml}^{\mathrm{het}} \equiv \cJ(T+2R,W_0) = \{T+2R+1,\ldots,T+2R+W_0\}.
\]




\begin{definition}[Densities for the small $t$ lower bound under trial heterogeneity]
For a sufficiently small constant $c>0$, define
\[
    p_{\pm,\Delta_T}(q)
    \equiv
    1\pm \Delta_T g_{\cJ_{\sml}^{\mathrm{het}}}(q),
    \qquad
    \Delta_T\equiv cT^{-s}.
\]
\end{definition}

The same argument as in Section \ref{sec:trial_only_lower_bound} shows that $p_{\pm,\Delta_T}\in\Ht$ and that they differ at $u$ by $2cT^{-s}$. Since $\cB_{t_i}g_{\cJ_{\sml}^{\mathrm{het}}}=0$ for every $i$, the two densities induce exactly the same joint distribution of the observed sample and yield the following lower bound.

\begin{corollary}[Small $t$ lower bound under trial heterogeneity]
\label{cor:heterogeneous_trial_lower_bound}
Fix $u\in[\delta,1-\delta]$ and suppose that $L,p_{\max}>1$. Then, we have
\[
    \cR_{n,\mathbf t} \gtrsim T^{-s}.
\]
\end{corollary}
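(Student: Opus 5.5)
The plan is Le Cam's two-point method with the pair $p_{\pm,\Delta_T}$ just defined. The key fact is that the two candidates induce the same law of the heterogeneous sample, so the product $\chi^2$ divergence is zero. There are three steps: (i) $p_{\pm,\Delta_T}$ are valid members of $\Ht$; (ii) $\cB_{t_i} g_{\cJ_{\sml}^{\mathrm{het}}}=0$ for every $i$; (iii) $p_{+,\Delta_T}(u)-p_{-,\Delta_T}(u)=2cT^{-s}$. Le Cam's lemma then gives $\cR_{n,\mathbf t}\geq c T^{-s}$ up to an absolute factor, because $\bigotimes_i q_{i,+,\Delta_T}=\bigotimes_i q_{i,-,\Delta_T}$. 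Two identical laws force any estimator to incur error at least half the separation under one of the two hypotheses.

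For (ii) and (iii), the only inputs are degree counting and Legendre orthogonality. The normalized block sum $\sum_{k\in\cJ_{\sml}^{\mathrm{het}}}\phi_k(u)\phi_k$ involves only degrees in $\{T+2R+1,\ldots,T+2R+W_0\}$. Multiplying by the degree-$2R$ polynomial $w_R$ produces a polynomial whose Legendre expansion contains only degrees $\ell\geq T+1$. Indeed, $\ip{w_R\phi_k}{\phi_\ell}_{L_2}=\ip{\phi_k}{w_R\phi_\ell}_{L_2}=0$ whenever $\ell+2R<k$.

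Since each $b_{t_i,x}\in\Pi_{t_i}\subset\Pi_T$, it follows that $(\cB_{t_i}g)(x)=\ip{b_{t_i,x}}{g}_{L_2}=0$ for all $x$ and $i$. In particular, $\int_0^1 g=0$, because $1\in\Pi_T$. Hence $q_{i,\pm,\Delta_T}(x)=1/(t_i+1)$ for every $i$. Moreover, $w_R(u)=1$ and the block normalization give $g(u)=1$. The denominator $\sum_{k\in\cJ_{\sml}^{\mathrm{het}}}\phi_k(u)^2$ is bounded below uniformly by Lemma~\ref{lem:legendre_block}, since the block has width $W_0$.

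For (i), I would invoke Lemma~\ref{lem:trial_perturbation} with $t$ replaced by $T$. The function $g_{\cJ_{\sml}^{\mathrm{het}}}$ is exactly the function $g_{\cJ_{\sml}}$ of Section~\ref{sec:trial_only_lower_bound} with $t=T$, and that lemma holds uniformly over $t\in\mathbb N$ and $u\in[\delta,1-\delta]$. This gives $\norm{g}_{C^s}\lesssim T^s$, and hence $\Delta_T\norm{g}_{C^s}\lesssim c$. Since $s>0$, the sup-norm is part of the $C^s$ norm, so $\Delta_T\norm{g}_\infty\lesssim c$ as well. Choosing $c$ small then ensures three things: $p_{\pm,\Delta_T}\geq 1/2\geq0$, $\norm{p_{\pm,\Delta_T}}_\infty\leq 1+Cc\leq p_{\max}$, and $\norm{p_{\pm,\Delta_T}}_{C^s}\leq 1+Cc\leq L$. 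These use $L,p_{\max}>1$ and the fact that the constant function $1$ has $C^s$ norm $1$. Integration to one follows from $\int g=0$.

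The only step with real content is the Hölder bound. Because it is already available as Lemma~\ref{lem:trial_perturbation} (uniform in $t$), it transfers verbatim, so I expect no serious obstacle. The only care needed is to confirm that the lemma's constants do not depend on $t$, so that it can be applied at $T$. Note also that no lower bound on $T$ (such as Assumption~\ref{assm:constants_hetero}) is required, since $\lambda$-comparability plays no role here.
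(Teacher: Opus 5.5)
Your proposal is correct and follows the paper's argument: the paper proves this corollary by rerunning the proof of Theorem~\ref{thm:trial_lower_bound} with $t$ replaced by $T$. It uses Lemma~\ref{lem:trial_perturbation} at $T$ to place $p_{\pm,\Delta_T}$ in $\Ht$, and uses orthogonality of $g_{\cJ_{\sml}^{\mathrm{het}}}$ to $\Pi_T\supseteq\Pi_{t_i}$ to make the two observed laws identical, which yields the two-point bound $cT^{-s}$.
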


Combining the two lower bounds and applying the same monotonicity argument as in Corollary \ref{cor:lower_bound_bias_variance} yields
\[
    \cR_{n,\mathbf t}
    \gtrsim
    \min_{m \in \cM_{\mathbf{t}}}
    \left\{
        m^{-s}
        +
        \sqrt{\frac{m}{\cI_m}}
    \right\}.
\]




\section{Proofs}
\label{sec:proofs}

\subsection{Proofs for the smooth perturbation framework}
\label{sec:proofs_shared_construction}

\begin{proof}[Proof of Lemma \ref{lem:bernstein_durrmeyer_eigen}]
We refer the readers to \cite{Derriennic1981}.
\end{proof}

\begin{proof}[Proof of Lemma \ref{lem:legendre_block}]
The proof proceeds in two parts, first some preliminary calculations and then bounding the actual quantity of interest.

\paragraph{Preliminary calculations.} Write $2u-1 = \cos\theta$. Since $u\in[\delta,1-\delta]$, we have $\theta\in[\theta_\delta,\pi-\theta_\delta]$ for some $\theta_\delta>0$ depending only on $\delta$. As $k \to \infty$, Theorem 8.21.8 of \cite{szego75} yields
\[
    P_k(\cos\theta) = \left(\frac{2}{\pi k\sin\theta}\right)^{1/2} \cos\left(\left(k+\frac12\right)\theta-\frac{\pi}{4}\right) + O_\delta(k^{-3/2}),
\]
uniformly for $\theta\in[\theta_\delta,\pi-\theta_\delta]$. As a result, for the shifted Legendre polynomial $\phi_k$, we have
\[
    \phi_k(u) = \frac{2}{\sqrt{\pi\sin\theta}}
    \cos\left(\left(k+\frac12\right)\theta-\frac{\pi}{4}\right) + O_\delta(k^{-1}) \equiv \frac{2}{\sqrt{\pi\sin\theta}}\cos\left(a_k\right) + O_\delta(k^{-1}),
\]
uniformly for $\theta\in[\theta_\delta,\pi-\theta_\delta]$ as $k\to\infty$. 
If we define $R_k(\theta)$ to be the exact difference
\[
    R_k(\theta) \equiv \phi_k(u) - \frac{2}{\sqrt{\pi\sin\theta}}\cos(a_k),
\]
then the asymptotic expansion implies that there exist $k_\delta\geq1$ and $A_\delta<\infty$ such that
\[
    \sup_{\theta\in[\theta_\delta,\pi-\theta_\delta]}
    |R_k(\theta)|
    \leq \frac{A_\delta}{k}, \qquad k\geq k_\delta.
\]
For each of the finitely many integers $0\leq k<k_\delta$, the function $R_k$ is continuous on the compact interval $[\theta_\delta,\pi-\theta_\delta]$. Therefore,
\[
    C_{\delta,0} \equiv \max\left\{2A_\delta,\,\max_{0\leq k<k_\delta}(k+1)\sup_{\theta\in[\theta_\delta,\pi-\theta_\delta]}|R_k(\theta)|\right\}
    <\infty.
\]
Since $k^{-1}\leq2(k+1)^{-1}$ for $k\geq1$, it follows for every $k\in\mathbb N_0$ that
\[
    \phi_k(u) = \frac{2}{\sqrt{\pi\sin\theta}}\cos(a_k)
    +R_k(\theta), \qquad |R_k(\theta)|
    \leq \frac{C_{\delta,0}}{k+1},
\]
uniformly over $\theta \in [\theta_\delta,\pi-\theta_\delta]$. 
Consequently, for every $a\in\mathbb N_0$ and $w\geq1$,
\[
    \sum_{k\in\cJ(a,w)}R_k(\theta)^2
    \leq C_\delta\sum_{k=0}^{\infty}(k+1)^{-2}
    \leq C_\delta.
\]

Next, fix $a\in\mathbb N_0$ and $w\geq1$. By the identity $2\cos^2(a_k)=1+\cos(2a_k)$, we have
\begin{align*}
    \sum_{k\in\cJ(a,w)}\cos^2(a_k)
    &=
    \frac{w}{2}
    +
    \frac12
    \sum_{k=a+1}^{a+w}\cos(2a_k)\\
    &=
    \frac{w}{2}
    +
    \frac12
    \sum_{k=a+1}^{a+w}
    \cos\left(
        2k\theta+\theta-\frac{\pi}{2}
    \right)\\
    &=
    \frac{w}{2}
    +
    \frac12\operatorname{Re}
    \left\{
        e^{i(\theta-\pi/2)}
        \sum_{k=a+1}^{a+w}e^{2ik\theta}
    \right\},
\end{align*}
where
\begin{align*}
    \left|
        \sum_{k=a+1}^{a+w}e^{2ik\theta}
    \right|
    =
    \left|
        e^{2i(a+1)\theta}
        \sum_{j=0}^{w-1}e^{2ij\theta}
    \right| =
    \left|
        e^{2i(a+1)\theta}
        \frac{1-e^{2iw\theta}}{1-e^{2i\theta}}
    \right| \leq
    \frac{2}{|1-e^{2i\theta}|}
    =
    \frac{1}{|\sin\theta|}.
\end{align*}
It follows that
\[
    \left|
        \sum_{k\in\cJ(a,w)}
        \cos^2(a_k)
        -
        \frac{w}{2}
    \right|
    \leq
    \frac{1}{2|\sin\theta|}.
\]

\paragraph{Bounding the quantity of interest.} We now bound the quantity of interest
$\sum_{k\in\cJ(a,w)}\phi_k(u)^2$. For the upper bound,
$(x+y)^2\leq2x^2+2y^2$ yields
\begin{align*}
    \sum_{k\in\cJ(a,w)}\phi_k(u)^2
    &\leq
    \frac{8}{\pi\sin\theta}
    \sum_{k\in\cJ(a,w)}\cos^2(a_k)
    +
    2\sum_{k\in\cJ(a,w)}R_k(\theta)^2\\
    &\leq
    \frac{8}{\pi\sin\theta}
    \left(
        \frac{w}{2}
        +
        \frac{1}{2\sin\theta}
    \right)
    +
    C_\delta\\
    &=
    \frac{4w}{\pi\sin\theta}
    +
    \frac{4}{\pi\sin^2\theta}
    +
    C_\delta \leq C_\delta w,
\end{align*}
where the last inequality follows from
$\sin\theta\geq2\sqrt{\delta(1-\delta)}$ and $w\geq1$.

For the lower bound, applying $(x+y)^2\geq x^2/2-y^2$ gives
\begin{align*}
    \sum_{k\in\cJ(a,w)}\phi_k(u)^2
    &\geq
    \frac{2}{\pi\sin\theta}
    \sum_{k\in\cJ(a,w)}\cos^2(a_k)
    -
    \sum_{k\in\cJ(a,w)}R_k(\theta)^2\\
    &\geq
    \frac{2}{\pi\sin\theta}
    \left(
        \frac{w}{2}
        -
        \frac{1}{2\sin\theta}
    \right)
    -
    C_\delta\\
    &=
    \frac{w}{\pi\sin\theta}
    -
    \frac{1}{\pi\sin^2\theta}
    -
    C_\delta\\
    &\geq
    \frac{w}{\pi}
    -
    C_\delta,
\end{align*}
where the last inequality follows from $\sin\theta\leq1$ for the first term and from
$\sin\theta\geq2\sqrt{\delta(1-\delta)}$ to absorb
$1/(\pi\sin^2\theta)$ into $C_\delta$.

If we choose
$
    W_0(\delta)
    \geq
    \max\left\{
        1,
        \left\lceil2\pi C_\delta\right\rceil
    \right\},
$
then, for every $w\geq W_0(\delta)$,
\[
    \frac{w}{\pi}-C_\delta
    \geq
    \frac{w}{2\pi}.
\]
Combining this with the upper bound gives
\[
    c_\delta|\cJ(a,w)|
    \leq
    \sum_{k\in\cJ(a,w)}\phi_k(u)^2
    \leq
    C_\delta|\cJ(a,w)|,
\]
where one may take $c_\delta=1/(2\pi)$. Since
$|\cJ(a,w)|=w$, this proves the result.
\end{proof}

We note a consequence of Lemma \ref{lem:legendre_block}, which is the uniform bound
\begin{equation}
\label{eq:proof_uniform_legendre_bound}
    \sup_{u\in[\delta,1-\delta]}
    \sup_{k\geq0}
    |\phi_k(u)|
    \leq
    C_\delta.
\end{equation}
This follows since $\phi_0\equiv1$, and for $k \geq 1$, we can apply the upper bound
in Lemma \ref{lem:legendre_block} to
$\{k,\ldots,k+W_0-1\}$.

\subsection{Proofs for the intermediate $t$ lower bound}
\label{sec:proofs_statistical_lower_bound}

We first verify the two basic properties of the constructed perturbation for completeness. For notational simplicity, write
\begin{equation}\label{eq:kbstat}
    K_{\cJ_{\intm}}(u,q) \equiv \sum_{k\in\cJ_{\intm}}\lambda_{t,k}^{-1}\phi_k(u)\phi_k(q).
\end{equation}

Since $w_R(u)=1$, $ g_{\cJ_{\intm}}(u)=1$. Further, we have 
\begin{align*}
    \int_0^1g_{\cJ_{\intm}}(q)\dd q
    &=
    \frac{1}{K_{\cJ_{\intm}}(u,u)}
    \sum_{k=2R+1}^m
    \lambda_{t,k}^{-1}\phi_k(u)
    \ip{w_R\phi_k}{\phi_0}_{L_2} \\
    &=
    \frac{1}{K_{\cJ_{\intm}}(u,u)}
    \sum_{k=2R+1}^m
    \lambda_{t,k}^{-1}\phi_k(u)
    \ip{\phi_k}{w_R}_{L_2} =0,
\end{align*}
where the second equality follows from $\phi_0 \equiv 1$ and self-adjointness of the multiplication by a real-valued function, and the last from $w_R$ having degree $2R<k$ for every $k\in\cJ_{\intm}$.

Throughout the rest of the analysis, the following result on the eigenvalues will prove handy.

\begin{lemma}\label{lem:inverse_eig_ratio}
    For $0\le\ell\le m\le\rho t$, we have
    \[
        \exp\left(-\frac{2}{1-\rho}\frac{\ell m}{t}\right) 
        \leq \frac{\lambda_{t,m-\ell}^{-1}}{\lambda_{t,m}^{-1}}
        \leq \exp\left(-\frac{1-\rho}{1+\rho}\frac{\ell m}{t}\right) \leq 1.
    \]
\end{lemma}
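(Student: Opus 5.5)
The plan is to write the ratio explicitly as a finite product and bound its logarithm term by term with elementary inequalities for $\log(1+x)$. By Lemma~\ref{lem:bernstein_durrmeyer_eigen},
\[
    \frac{\lambda_{t,m-\ell}^{-1}}{\lambda_{t,m}^{-1}}
    =\frac{\lambda_{t,m}}{\lambda_{t,m-\ell}}
    =\prod_{j=m-\ell}^{m-1}\frac{t-j}{t+j+2},
\]
since the factors with $j<m-\ell$ cancel. Taking logarithms gives
\[
    -\log\frac{\lambda_{t,m-\ell}^{-1}}{\lambda_{t,m}^{-1}}
    =\sum_{j=m-\ell}^{m-1}\log\left(1+x_j\right),
    \qquad x_j\equiv\frac{2j+2}{t-j}.
\]
The case $\ell=0$ is trivial because both sides equal one, so I assume $1\leq\ell\leq m$. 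Then $m\geq1$, and hence $\rho t\geq1$. Every $j$ in the range satisfies $0\leq j\leq m-1<\rho t<t$, so each $x_j$ is well defined and nonnegative. The final inequality $\leq1$ is immediate because the middle exponent is nonpositive.

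For the lower bound on the ratio, I will use $\log(1+x)\leq x$ together with $t-j\geq t-m\geq(1-\rho)t$. This gives
\[
    \sum_{j=m-\ell}^{m-1}\log(1+x_j)\leq\frac{1}{(1-\rho)t}\sum_{j=m-\ell}^{m-1}(2j+2).
\]
The remaining sum is $\ell(2m-\ell+1)$, which is at most $2\ell m$ because $\ell\geq1$. Hence the sum is at most $2\ell m/\{(1-\rho)t\}$, and exponentiating yields the left inequality.

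For the upper bound on the ratio, I will use $\log(1+x)\geq x/(1+x)$. Here $x_j/(1+x_j)=(2j+2)/(t+j+2)$, and the denominator satisfies $t+j+2\leq t+m+1\leq(1+\rho)t+1$. The numerators sum to $\ell(2m-\ell+1)\geq\ell(m+1)\geq\ell m$ because $\ell\leq m$. So the log-sum is at least $\ell m/\{(1+\rho)t+1\}$. It then remains to check that
\[
    (1+\rho)t+1\leq\frac{1+\rho}{1-\rho}\,t .
\]
This is equivalent to $1-\rho\leq\rho(1+\rho)t$, which holds since $\rho t\geq1$ and $1-\rho<1+\rho$. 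Exponentiating gives the middle inequality.

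The only delicate point is this last step, absorbing the additive $+1$ in the denominator into the constant $(1-\rho)/(1+\rho)$. It needs $\rho t\geq1$, which is guaranteed once $\ell\geq1$ forces $m\geq1$. Everything else is routine bookkeeping of the arithmetic sum $\sum_{j=m-\ell}^{m-1}(2j+2)=\ell(2m-\ell+1)$.
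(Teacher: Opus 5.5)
Your proposal is correct and follows the paper's proof essentially step for step: the same telescoped product, the same inequalities $x/(1+x)\leq\log(1+x)\leq x$, the same estimate $t-j\geq(1-\rho)t$, and the same arithmetic sum $\ell(2m-\ell+1)\in[\ell m,2\ell m]$. The only difference is in the lower bound on $\log(1+x_j)$: the paper bounds $x_j\leq 2\rho/(1-\rho)$ to get $x_j/(1+x_j)\geq\frac{1-\rho}{1+\rho}x_j\geq\frac{1-\rho}{1+\rho}\frac{2j+2}{t}$ directly, which avoids your extra step of absorbing the additive $+1$ via $\rho t\geq1$.
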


\begin{proof}
    The result is trivial when $\ell=0$, so suppose that $1\le \ell\le m$. Note that
    \[
        \log\frac{\lambda_{t,m}^{-1}}{\lambda_{t,m-\ell}^{-1}} = \sum_{j=m-\ell}^{m-1}\log\left(\frac{t+j+2}{t-j}\right) 
        = \sum_{j=m-\ell}^{m-1}\log\left(1+\frac{2j+2}{t-j}\right).
    \]
    Since $(2j+2)/(t-j) \geq 0$ for every $j \in \{m-\ell,\ldots,m-1\}$, by 
    \[
        \frac{x}{1+x}\leq \log(1+x) \leq x,
        \qquad x \geq 0,
    \]
    with $x = (2j+2)/(t-j)$, we have 
    \[
        \frac{2j+2}{t+j+2}
        \leq \log\left(1+\frac{2j+2}{t-j}\right)
        \leq \frac{2j+2}{t-j}.
    \]
    We further claim that
    \[
        \frac{1-\rho}{1+\rho}\frac{2j+2}{t}
        \leq \log\left(1+\frac{2j+2}{t-j}\right)
        \leq \frac{1}{1-\rho}\frac{2j+2}{t}.
    \]
    For the upper bound, note that we have $(1-\rho)t \leq t - m \leq t-j$. Hence
    \[
        \frac{2j+2}{t-j} \leq \frac{2j+2}{(1-\rho)t}.
    \]
    For the lower bound, again
    since $j+1\le m\le\rho t$,
    \[
        x = \frac{2(j+1)}{t-j}
        \leq \frac{2\rho t}{(1-\rho)t}
        = \frac{2\rho}{1-\rho}.
    \]
    Consequently,
    \[
        \frac{x}{1+x}
        \geq \frac{x}{1+2\rho/(1-\rho)}
        = \frac{1-\rho}{1+\rho}x 
        \geq \frac{1-\rho}{1+\rho}\frac{2j+2}{t},
    \]
    so the claimed inequalities hold. Finally, note that
    \[
    \sum_{j=m-\ell}^{m-1}(j+1) = \frac{\ell(2m-\ell+1)}{2},
    \]
    and, since $1 \leq \ell \leq m$,
    \[
        \frac{\ell m}{2} 
        \leq \sum_{j=m-\ell}^{m-1}(j+1) 
        \leq \ell m.
    \]
    It follows that
    \[
        \frac{1-\rho}{1+\rho}\frac{\ell m}{t}
        \leq \log \frac{\lambda_{t,m}^{-1}}{\lambda_{t,m-\ell}^{-1}}
        \leq \frac{2}{1-\rho}\frac{\ell m}{t}.
    \]
    Exponentiating the negative of these inequalities yields the desired.
\end{proof}

\begin{proof}[Proof of Lemma \ref{lem:localized_block_comparison}]
By definition, we can write
\[
S_m(u)
=
\sum_{k\in \cJ_{\intm}}\lambda_{t,k}^{-1}\phi_k(u)^2
+
\sum_{k=0}^{2R}\lambda_{t,k}^{-1}\phi_k(u)^2.
\]
For each fixed $k\leq2R$, the function $\phi_k$ is bounded on $[0,1]$. Further, $\lambda_{t,k}^{-1}$ is nonincreasing in $t$ and hence is bounded uniformly over $t\geq t_0$.
It follows that
\[
    \sum_{k=0}^{2R}\lambda_{t,k}^{-1}\phi_k(u)^2\leq C.
\]
We also have
\[
    \sum_{k\in \cJ_{\intm}}\lambda_{t,k}^{-1}\phi_k(u)^2
    \geq \sum_{k=2R+1}^{m}\phi_k(u)^2
    \geq c_\delta(m-2R) 
    \geq c_\delta W_0,
\]
where the first inequality follows from $\lambda_{t,k}^{-1}\geq1$,
the second from Lemma~\ref{lem:legendre_block}, and the third from
$|\cJ_{\intm}|=m-2R\geq W_0$. Hence
\[
    \sum_{k\in \cJ_{\intm}}\lambda_{t,k}^{-1}\phi_k(u)^2
    \leq S_m(u)
    \leq \sum_{k\in \cJ_{\intm}}\lambda_{t,k}^{-1}\phi_k(u)^2+C
    \lesssim \sum_{k\in \cJ_{\intm}}\lambda_{t,k}^{-1}\phi_k(u)^2.
\]

Now let
\[
    \tilde g(q) \equiv \sum_{k=2R+1}^{m}c_k\phi_k(q), \qquad \tilde g(u)=1.
\]
Then we have
\[
    1 = \left(\sum_{k=2R+1}^{m}c_k\phi_k(u)\right)^2  \leq\left(\sum_{k=2R+1}^{m}\lambda_{t,k}c_k^2\right)\left(\sum_{k=2R+1}^{m}\lambda_{t,k}^{-1}\phi_k(u)^2\right)
    = \langle \tilde g,\cBD_t\tilde g\rangle_{L_2} \sum_{k\in \cJ_{\intm}}\lambda_{t,k}^{-1}\phi_k(u)^2,
\]
where the second step follows from Cauchy-Schwarz and the third from orthonormality of $\phi$ and Lemma \ref{lem:bernstein_durrmeyer_eigen}. It follows that 
\[
    \langle \tilde g,\cBD_t\tilde g\rangle_{L_2} \geq \left\{\sum_{k\in \cJ_{\intm}}\lambda_{t,k}^{-1}\phi_k(u)^2\right\}^{-1},
\]
and equality is attained for
\[
    c_k = \lambda_{t,k}^{-1}\phi_k(u)\left\{\sum_{k\in \cJ_{\intm}}\lambda_{t,k}^{-1}\phi_k(u)^2\right\}^{-1}. \qedhere
\]
\end{proof}

\begin{proof}[Proof of Lemma \ref{lem:operator_bound}]
We have by definition that
\[
    \langle g_{\cJ_{\intm}},\cBD_tg_{\cJ_{\intm}}\rangle_{L_2} 
    = \frac{\langle w_RK_{\cJ_{\intm}}, \cBD_t(w_RK_{\cJ_{\intm}})\rangle_{L_2}}{K_{\cJ_{\intm}}(u,u)^2}, 
\]
with $K_{\cJ_{\intm}}$ defined in \eqref{eq:kbstat}. We will show that
\[
    \langle w_RK_{\cJ_{\intm}},\cBD_t(w_RK_{\cJ_{\intm}})\rangle_{L_2} \lesssim K_{\cJ_{\intm}}(u,u),
\]
so that dividing by $K_{\cJ_{\intm}}(u,u)^2$ and applying Lemma \ref{lem:localized_block_comparison} gives the desired.

Let $\beta_{\ell k} \equiv \langle w_R\phi_k,\phi_\ell\rangle_{L_2}$ for $k,\ell\geq0$. Note that $\beta_{\ell k}=0$ whenever $|\ell-k|>2R$ since $w_R$ has degree $2R$. If $\ell>k+2R$, then $w_R\phi_k$ has degree strictly less than $\ell$; if $\ell<k-2R$, then $\beta_{\ell k} = \langle\phi_k,w_R\phi_\ell\rangle_{L_2}=0$ because $w_R\phi_\ell$ has degree strictly less than $k$. Hence we can write
\[
w_RK_{\cJ_{\intm}}
=
\sum_{\ell=0}^{t}c_\ell\phi_\ell,
\qquad
c_\ell=
\sum_{\substack{k \in \cJ_{\intm} \\|\ell-k|\leq2R}}
\lambda_{t,k}^{-1}\phi_k(u)\beta_{\ell k}.
\]

We next compare the eigenvalues for $|\ell-k|\leq2R$ where $0\leq\ell\leq t$ and $k\in\cJ_{\intm}$. If $\ell\geq k$, monotonicity gives
$\lambda_{t,\ell}\leq\lambda_{t,k}$.
If $\ell<k$, applying the lower bound in Lemma~\ref{lem:inverse_eig_ratio} and taking reciprocals gives
\[
    \frac{\lambda_{t,\ell}}{\lambda_{t,k}}
    \leq
    \exp\left(\frac{2(k-\ell)k}{(1-\rho)t}\right)
    \leq
    \exp\left(\frac{4R\rho}{1-\rho}\right),
\]
where the last inequality uses $k-\ell\leq2R$ and $k\leq m\leq\rho t$. It follows that $\lambda_{t,\ell}\lesssim\lambda_{t,k}$ in both cases. We thus have
\begin{align*}
    \langle w_RK_{\cJ_{\intm}},
    \cBD_t(w_RK_{\cJ_{\intm}})\rangle_{L_2}
    &= \sum_{\ell=0}^{t}\lambda_{t,\ell}c_\ell^2\\
    &\leq (4R+1)\sum_{k \in \cJ_{\intm}}
    \lambda_{t,k}^{-2}\phi_k(u)^2
    \sum_{\substack{0\leq\ell\leq t\\|\ell-k|\leq2R}}
    \lambda_{t,\ell}\beta_{\ell k}^2\\
    &\lesssim \sum_{k \in \cJ_{\intm}}\lambda_{t,k}^{-1}\phi_k(u)^2\sum_{\substack{0\leq\ell\leq t\\|\ell-k|\leq2R}}\beta_{\ell k}^2,
\end{align*}
where the second line follows from
\[
c_\ell^2
\leq
(4R+1)
\sum_{\substack{k \in \cJ_{\intm}\\|\ell-k|\leq2R}}
\lambda_{t,k}^{-2}\phi_k(u)^2\beta_{\ell k}^2,
\]
since each sum defining $c_\ell$ contains at most $4R+1$ terms and Cauchy--Schwarz, and the third from $\lambda_{t,\ell}\lesssim\lambda_{t,k}$.

It remains to analyze $\sum_{\substack{0\leq\ell\leq t\\|\ell-k|\leq2R}}\beta_{\ell k}^2$. Note that Cauchy--Schwarz and $\|\phi_k\|_{L_2}=1$ implies
\[
    |\beta_{\ell k}| \leq \|w_R\phi_k\|_{L_2}\|\phi_\ell\|_{L_2} \leq\|w_R\|_\infty.
\]
It follows that
\[
\sum_{\substack{0\leq\ell\leq t\\|\ell-k|\leq2R}}\beta_{\ell k}^2 \leq(4R+1)\|w_R\|_\infty^2 \lesssim1,
\]
where the last inequality follows from
$\|w_R\|_\infty\leq\{4\delta(1-\delta)\}^{-R}$.

We therefore have
\[
\langle w_RK_{\cJ_{\intm}},
\cBD_t(w_RK_{\cJ_{\intm}})\rangle_{L_2}
\lesssim
\sum_{k \in \cJ_{\intm}}\lambda_{t,k}^{-1}\phi_k(u)^2
=
K_{\cJ_{\intm}}(u,u). \qedhere
\]
\end{proof}

\begin{proof}[Proof of Lemma \ref{lem:holder_bound}]
Note that, for $0\leq j\leq r+1$, the $j$-th derivative of
$g_{\cJ_{\intm}}$ is given by
\[
    g_{\cJ_{\intm}}^{(j)}(q)
    =
    \frac{1}{K_{\cJ_{\intm}}(u,u)}
    \sum_{k\in\cJ_{\intm}}
    \lambda_{t,k}^{-1}\phi_k(u)
    \frac{d^j}{dq^j}\{w_R(q)\phi_k(q)\}.
\]

Hence the key challenge of the proof is to show that, for $k\geq1$,
\begin{equation}\label{eq:proof_tapered_legendre_derivative}
    \left\|
        \frac{d^j}{dq^j}\{w_R(q)\phi_k(q)\}
    \right\|_\infty
    =
    \left\|
        \sum_{a=0}^j
        \binom{j}{a}
        w_R^{(a)}(q)\phi_k^{(j-a)}(q)
    \right\|_\infty
    \leq C_{R,r,\delta}k^j,
    \qquad 0\leq j\leq r+1.
\end{equation}

\paragraph{Bounding the derivative of $w_R$.} Fix $0\leq j\leq r+1$ and $0\leq a\leq j$. We have
\begin{align*}
    \big|w_R^{(a)}(q)\big|
    &=
    \frac{1}{\{u(1-u)\}^R}
    \left|
        \sum_{c=0}^a
        \binom{a}{c}
        \frac{d^c}{dq^c}q^R
        \frac{d^{a-c}}{dq^{a-c}}(1-q)^R
    \right|\\
    &\leq
    C_{R,a,\delta}
    \sum_{c=0}^a
    q^{R-c}(1-q)^{R-a+c}\\
    &=
    C_{R,a,\delta}
    \{q(1-q)\}^{R-a}
    \sum_{c=0}^a
    q^{a-c}(1-q)^c\\
    &\leq
    C_{R,a,\delta}
    \{q(1-q)\}^{R-a},
\end{align*}
where the first line follows from Leibniz's rule, the second line from $u\in[\delta,1-\delta]$, and the last from $q^{a-c}(1-q)^c\leq1$.

\paragraph{Bounding the derivative of $\phi_k$.} For the derivative bound on the shifted Legendre polynomials, fix an integer $0\leq\ell\leq r+1$. By  chain rule,
\[
    \big|\phi_k^{(\ell)}(q)\big|
    =
    2^\ell\sqrt{2k+1}
    \big|P_k^{(\ell)}(2q-1)\big|
    =
    2^\ell\sqrt{2k+1}
    \big|P_k^{(\ell)}(x)\big|,
\]
where $x=2q-1$. Since $P_k(-x)=(-1)^kP_k(x)$,
differentiating $\ell$ times with respect to $x$ gives
\[
    (-1)^\ell P_k^{(\ell)}(-x)
    =
    (-1)^kP_k^{(\ell)}(x),
\]
and subsequently
$P_k^{(\ell)}(-x)=(-1)^{k+\ell}P_k^{(\ell)}(x)$.
Hence
$\big|P_k^{(\ell)}(-x)\big|=\big|P_k^{(\ell)}(x)\big|$,
and it suffices to consider $0\leq x\leq1$.

Write $x=\cos\theta$, where $0\leq\theta\leq\pi/2$.
By Theorem~7.32.4 of \citet{szego75}, there exists a fixed
constant $c>0$ such that
\[
    \big|P_k^{(\ell)}(\cos\theta)\big|
    \leq
    C_\ell
    \begin{cases}
        k^{2\ell},
        &0\leq\theta\leq ck^{-1},\\[1mm]
        k^{\ell-1/2}\theta^{-\ell-1/2},
        &ck^{-1}\leq\theta\leq\pi/2,
    \end{cases}
\]
for all sufficiently large $k$. Suppose
$0\leq\theta\leq ck^{-1}$. Since $\sin\theta\leq\theta$,
we have
\[
    (\sin\theta)^{\ell+1/2}
    \big|P_k^{(\ell)}(\cos\theta)\big|
    \leq
    C_\ell\theta^{\ell+1/2}k^{2\ell}
    \leq
    C_\ell(ck^{-1})^{\ell+1/2}k^{2\ell}
    \leq
    C_\ell k^{\ell-1/2}.
\]
If instead $ck^{-1}\leq\theta\leq\pi/2$, again using
$\sin\theta\leq\theta$, we have
\[
    (\sin\theta)^{\ell+1/2}
    \big|P_k^{(\ell)}(\cos\theta)\big|
    \leq
    C_\ell(\sin\theta)^{\ell+1/2}
    k^{\ell-1/2}\theta^{-\ell-1/2}
    =
    C_\ell k^{\ell-1/2}
    \left(\frac{\sin\theta}{\theta}\right)^{\ell+1/2}
    \leq
    C_\ell k^{\ell-1/2}.
\]
Combining the two cases and noting
$\sin\theta=\sqrt{1-x^2}$ gives
\[
    (1-x^2)^{\ell/2+1/4}
    \big|P_k^{(\ell)}(x)\big|
    \leq C_\ell k^{\ell-1/2},
    \qquad 0\leq x\leq1.
\]
It follows that
\begin{equation}\label{eq:weighted_legendre_derivative}
    (1-x^2)^{\ell/2+1/4}
    \big|P_k^{(\ell)}(x)\big|
    \leq C_\ell k^{\ell-1/2},
    \qquad -1\leq x\leq1,
\end{equation}
for all sufficiently large $k$. For each of the finitely many
remaining positive integers $k$, the function
\[
    x\mapsto
    (1-x^2)^{\ell/2+1/4}
    \big|P_k^{(\ell)}(x)\big|
\]
is continuous on $[-1,1]$ and is therefore bounded.
After increasing $C_\ell$ if necessary,
\eqref{eq:weighted_legendre_derivative} holds for every $k\geq1$.

Now, since $1-x^2=4q(1-q)$,
\eqref{eq:weighted_legendre_derivative} gives
\begin{align*}
    \{q(1-q)\}^{\ell/2+1/4}
    \big|\phi_k^{(\ell)}(q)\big|
    &=
    2^{-1/2}\sqrt{2k+1}
    (1-x^2)^{\ell/2+1/4}
    \big|P_k^{(\ell)}(x)\big|\\
    &\leq
    C_\ell\sqrt{2k+1}k^{\ell-1/2}
    \leq C_\ell k^\ell.
\end{align*}

Setting $\ell=j-a$, we have for $0<q<1$ that
\[
    \big|w_R^{(a)}(q)\phi_k^{(j-a)}(q)\big|
    \leq
    C_{R,j,\delta}k^{j-a}
    \{q(1-q)\}^{R-a-(j-a)/2-1/4}.
\]
Note that the exponent of $q(1-q)$ is positive because
\[
    R-a-\frac{j-a}{2}-\frac14
    =
    R-\frac{j+a}{2}-\frac14
    \geq R-j-\frac14
    \geq \frac34,
\]
where the last inequality follows from $R\geq r+2$
and $j\leq r+1$. Since $0\leq q(1-q)\leq1/4$,
it follows that
\[
    \big|w_R^{(a)}(q)\phi_k^{(j-a)}(q)\big|
    \leq C_{R,j,\delta}k^{j-a}
    \leq C_{R,j,\delta}k^j.
\]
The same inequality holds at $q=0$ and $q=1$ by continuity.
Therefore, we have
\[
    \left\|
        \frac{d^j}{dq^j}\{w_R(q)\phi_k(q)\}
    \right\|_\infty
    \leq
    \sum_{a=0}^j
    \binom{j}{a}
    \left\|
        w_R^{(a)}\phi_k^{(j-a)}
    \right\|_\infty
    \leq C_{R,r,\delta}k^j,
    \qquad 0\leq j\leq r+1,
\]
and subsequently
\[
    \|g_{\cJ_{\intm}}^{(j)}\|_\infty
    \leq
    \frac{C_{R,r,\delta}}
    {K_{\cJ_{\intm}}(u,u)}
    \sum_{k\in\cJ_{\intm}}\lambda_{t,k}^{-1} \cdot m^j
    \qquad 0\leq j\leq r+1.
\]
by the derivative bound above, \eqref{eq:proof_uniform_legendre_bound}, and $k\leq m$. 

\paragraph{Showing $\sum_{k\in\cJ_{\intm}}\lambda_{t,k}^{-1}
\lesssim K_{\cJ_{\intm}}(u,u)$.}
Since $|\cJ_{\intm}|\geq W_0$, let
$N=\floor{|\cJ_{\intm}|/W_0}\geq1$ and partition
$\cJ_{\intm}$ into disjoint consecutive blocks
$\cJ_1,\ldots,\cJ_N$, taking the first $N-1$ blocks to have width $W_0$ and the last block to contain all remaining degrees. Then $W_0\leq|\cJ_\nu|<2W_0$ for every $\nu = 1,\ldots,N$.

For each $\nu$, let $k_\nu\equiv\max\cJ_\nu$. Applying Lemma~\ref{lem:inverse_eig_ratio} at $k_\nu$ with $\ell=k_\nu-k$ gives
\[
    \exp\left(-\frac{4\rho W_0}{1-\rho}\right)
    \leq
    \frac{\lambda_{t,k}^{-1}}{\lambda_{t,k_\nu}^{-1}}
    \leq 1,
    \qquad k\in\cJ_\nu,
\]
where the lower bound follows from $k_\nu-k<2W_0$ and $k_\nu\leq m\leq\rho t$. We then have
\begin{align*}
    \sum_{k\in\cJ_{\intm}}\lambda_{t,k}^{-1}
    &\leq
    \sum_{\nu=1}^{N}|\cJ_\nu|\lambda_{t,k_\nu}^{-1} \\
    &\lesssim
    \sum_{\nu=1}^{N}\lambda_{t,k_\nu}^{-1}
    \sum_{k\in\cJ_\nu}\phi_k(u)^2 \\
    &\lesssim
    \sum_{\nu=1}^{N}\sum_{k\in\cJ_\nu}
    \lambda_{t,k}^{-1}\phi_k(u)^2
    =
    K_{\cJ_{\intm}}(u,u),
\end{align*}
where the second inequality follows from
Lemma~\ref{lem:legendre_block} and the third from the eigenvalue comparison above. Hence,
\[
    \|g_{\cJ_{\intm}}^{(j)}\|_\infty
    \lesssim
    \frac{m^j}{K_{\cJ_{\intm}}(u,u)}
    \sum_{k\in\cJ_{\intm}}\lambda_{t,k}^{-1}
    \lesssim m^j,
    \qquad 0\leq j\leq r+1.
\]

\paragraph{Interpolation.} Finally, since $s=r+\alpha$, applying the interpolation inequality for the H\"older seminorm
$[f]_{C^\alpha([0,1])} \lesssim \|f\|_\infty^{1-\alpha}\|f'\|_\infty^\alpha$
with $f=g_{\cJ_{\intm}}^{(r)}$ gives
\[
    [g_{\cJ_{\intm}}^{(r)}]_{C^\alpha([0,1])}
    \lesssim
    (m^r)^{1-\alpha}(m^{r+1})^\alpha
    =m^s. 
\]
Together with the derivative bounds, this yields
\[
    \|g_{\cJ_{\intm}}\|_{C^s([0,1])}
    \lesssim m^s. \qedhere
\]
\end{proof}

\begin{proof}[Proof of Theorem \ref{thm:stat_lower_bound}]
Fix $m \in \cM_t$. 

\paragraph{Perturbed densities belong to the H\"{o}lder class.} We first show $p_\pm\in\mathcal H^s(L,p_{\max})$. 

Since $\int_0^1g_{\cJ_{\intm}}(q)\dd q=0$, both $p_{\pm,\Delta_m}$ integrate to one. Also, note that Lemma \ref{lem:holder_bound} and $\Delta_m\le c m^{-s}$ imply
\[
    \Delta_m\|g_{\cJ_{\intm}}\|_{C^s} \le Cc.
\]
Then we can choose $c>0$ 
sufficiently small so that 
\[
\|p_\pm\|_{C^s} \le 1+Cc \le L, \qquad  \|p_\pm\|_\infty \le 1+Cc \le p_{\max}.
\]
Moreover, since $\|g_{\cJ_{\intm}}\|_\infty\le \|g_{\cJ_{\intm}}\|_{C^s}$, we have 
\[
p_\pm(q) \ge 1-\Delta_m\|g_{\cJ_{\intm}}\|_\infty \ge 1-Cc \geq 1/2.
\]
Therefore $p_\pm\in\mathcal H^s(L,p_{\max})$.

\paragraph{Bounding the $\chi^2$ divergence.} Next we consider
\[
    q_\pm(x) \equiv \int_0^1 b_{t,x}(q)p_\pm(q)\dd q = \frac1{t+1}\pm\Delta_m(\cB_tg_{\cJ_{\intm}})(x),
\]
which are the actual observable laws in our setting. Since $p_{-,\Delta_m}\geq1/2$,
\[
    q_{-,\Delta_m}(x)
    =
    \int_0^1b_{t,x}(q)p_{-,\Delta_m}(q)\dd q
    \geq
    \frac{1}{2(t+1)}.
\]
It follows that
\begin{align*}
    \chi^2(q_{+,\Delta_m},q_{-,\Delta_m})
    &=
    \sum_{x=0}^t
    \frac{
        \{q_{+,\Delta_m}(x)-q_{-,\Delta_m}(x)\}^2
    }{
        q_{-,\Delta_m}(x)
    } \\
    &=
    4\Delta_m^2
    \sum_{x=0}^t
    \frac{(\cB_tg_{\cJ_{\intm}})(x)^2}{q_{-,\Delta_m}(x)} \\
    &\leq
    8(t+1)\Delta_m^2
    \sum_{x=0}^t(\cB_tg_{\cJ_{\intm}})(x)^2 \\
    &=
    8\Delta_m^2
    \ip{g_{\cJ_{\intm}}}{\cBD_tg_{\cJ_{\intm}}}_{L_2} \lesssim
    \frac{\Delta_m^2}{S_m(u)},
\end{align*}
where the last step follows from Lemma \ref{lem:operator_bound}. Hence for the product distributions, we have
\[
    1+
    \chi^2\left(
        q_{+,\Delta_m}^{\otimes n},
        q_{-,\Delta_m}^{\otimes n}
    \right) =
    \left\{
        1+\chi^2(q_{+,\Delta_m},q_{-,\Delta_m})
    \right\}^n \leq
    \exp\left\{
        n\chi^2(q_{+,\Delta_m},q_{-,\Delta_m})
    \right\}.
\]

Because $\Delta_m^2\le c^2S_m(u)/n$, 
\[
\chi^2(q_+^{\otimes n},q_-^{\otimes n})
\le
\exp(Cc^2)-1
\equiv \alpha_0.
\]
Choosing $c>0$ sufficiently small makes $\alpha_0$ a fixed absolute constant. Therefore, applying Le Cam's lemma \citep[Theorem 2.2]{tsybakov2008introduction} to $q_+^{\otimes n}$ and $q_-^{\otimes n}$ with separation $2\Delta_m$ yields
\[
\mathcal R_{n,t}
\ge
\frac{2\Delta_m}{8}\exp(-\alpha_0)
\gtrsim
\Delta_m.
\]

Maximizing over $m \in \cM_t$ proves the theorem.
\end{proof}

\subsection{Proofs for the small $t$ lower bound}
\label{sec:proofs_trial_only_lower_bound}

\begin{proof}[Proof of Lemma \ref{lem:trial_perturbation}]
The proof mostly follows from that of Lemma \ref{lem:holder_bound}.
For $j=0,\ldots,r+1$, we have 
\[
    \|g_{\cJ_{\sml}}^{(j)}\|_\infty
    \leq \frac{1}{\sum_{k\in\cJ_{\sml}}\phi_k(u)^2}
    \sum_{k\in\cJ_{\sml}}|\phi_k(u)|\left\|\frac{\dd^j}{\dd q^j}\{w_R\phi_k\}\right\|_\infty 
    \leq \frac{C}{W_0}\sum_{k\in\cJ_{\sml}}k^j 
    \lesssim t^j,
\]
where the second inequality follows from Lemma \ref{lem:legendre_block}, \eqref{eq:proof_uniform_legendre_bound} and \eqref{eq:proof_tapered_legendre_derivative}, and the third from $|\cJ_{\sml}|=W_0$ and $k\leq t+2R+W_0\leq(1+2R+W_0)t$ for every $t\geq1$ and $k\in\cJ_{\sml}$.

Therefore, applying the interpolation inequality with $f=g_{\cJ_{\sml}}^{(r)}$ yields 
\[
    [g_{\cJ_{\sml}}^{(r)}]_{C^\alpha} \lesssim (t^r)^{1-\alpha}(t^{r+1})^\alpha = t^s,
\]
and the desired follows. 
\end{proof}

\begin{proof}[Proof of Theorem \ref{thm:trial_lower_bound}]
We have by construction that $g_{\cJ_{\sml}}(u)=1$ and $\int_0^1g_{\cJ_{\sml}}(q)\dd q=0$. Further, by Lemma \ref{lem:trial_perturbation},
\[
    \Delta_t\norm{g_{\cJ_{\sml}}}_{C^s} \leq Cct^{-s}t^s = Cc.
\]
Thus, for a sufficiently small constant $c>0$, $p_{\pm,\Delta_t}(q)$ integrate to one, are nonnegative, and satisfy
\[
    \norm{p_{\pm,\Delta_t}}_{C^s}\leq L, \qquad \norm{p_{\pm,\Delta_t}}_\infty\leq p_{\max}.
\]
Therefore, $p_{\pm,\Delta_t}\in\Ht$.

Now, by construction, $g_{\cJ_{\sml}}$ is orthogonal to every polynomial in $\Pi_t$. It follows that
\[
    (\cB_tg_{\cJ_{\sml}})(x)
    =
    \int_0^1b_{t,x}(q)g_{\cJ_{\sml}}(q)\dd q
    =
    0,
    \qquad
    x=0,\ldots,t,
\]
and subsequently
\[
    q_{\pm,\Delta_t}(x)
    =
    \frac{1}{t+1}
    \pm
    \Delta_t(\cB_tg_{\cJ_{\sml}})(x)
    =
    \frac{1}{t+1}.
\]
Thus, the two mixing densities induce the same joint distribution of the observed sample.

For any estimator $\widehat p(u)$, we have
\[
    2ct^{-s} =
    |p_{+,\Delta_t}(u)-p_{-,\Delta_t}(u)| \leq
    |\widehat p(u)-p_{+,\Delta_t}(u)|
    +
    |\widehat p(u)-p_{-,\Delta_t}(u)|.
\]
Taking expectations under the joint distribution gives
\[
    \max_{\diamond\in\{+,-\}}
    \E_{p_{\diamond,\Delta_t}}
    |\widehat p(u)-p_{\diamond,\Delta_t}(u)|
    \geq
    ct^{-s}.
\]
Taking the infimum over all estimators gives the desired.
\end{proof}

\subsection{Proofs for the combined lower bound}
\label{sec:proofs_lower_bound_implications}

\begin{proof}[Proof of Corollary \ref{cor:lower_bound_bias_variance}]
For notational simplicity, we write $M \equiv\lfloor\rho t\rfloor$ and also
\[
    a_m\equiv m^{-s}, \qquad b_m\equiv\sqrt{\frac{S_m(u)}{n}}.
\]
$a_m$ is decreasing in $m$ and, since $S_m(u)$ is increasing, $b_m$ is increasing. We further define
\[
    L_* \equiv \max_{m_0 \leq m \leq M}\min(a_m,b_m),
    \qquad
    U_* \equiv \min_{m_0 \leq m \leq M}(a_m+b_m).
\]
\paragraph{Case 1: two sequences cross.} Suppose that there exists an $m \in \cM_t$ such that $b_m \geq a_m$, and let $m_*$ be the smallest such integer. If $m_*=m_0$, note that
\[
    S_{m_0}(u) 
    \leq C_\delta \sum_{k=0}^{m_0}\lambda_{t,k}^{-1}
    \leq C_\delta(m_0+1)\lambda_{t,m_0}^{-1} \leq C_\delta(m_0+1)\exp\left(\frac{2\rho m_0}{1-\rho}\right) \equiv C_{\delta, \rho, m_0},
\]
where the first inequality follows from \eqref{eq:proof_uniform_legendre_bound}, the second from monotonicity of the inverse eigenvalues, and the third from the lower bound in Lemma~\ref{lem:inverse_eig_ratio} with $m=\ell=m_0$ and $\lambda_{t,0}=1$, together with $m_0\leq\rho t$. It follows that
\[
    U_* \leq a_{m_0}+b_{m_0} \leq a_{m_0} + \sqrt{C_{\delta, \rho, m_0}} = (1+\sqrt{C_{\delta, \rho, m_0}}m_0^s)a_{m_0} \lesssim L_*.
\]

If instead $m_*>m_0$, then $b_{m_*-1} < a_{m_*-1}$ so 
\[
    b_{m_*-1} = \min(a_{m_*-1}, b_{m_*-1}) \leq L_*.
\]
Moreover, since $m_* \geq m_0 \geq 2$, we have $m_*/(m_*-1) \leq 2$ and subsequently
\[
    a_{m_*-1} = \left(\frac{m_*}{m_*-1}\right)^s a_{m_*} \leq 2^s a_{m_*}.
\]
It follows that 
\[
    U_* \le a_{m_*-1}+b_{m_*-1} \leq (2^s+1)L_* \lesssim L_*.
\]

\paragraph{Case 2: two sequences do not cross.}  Now suppose that $b_m < a_m$ for every $m \in \cM_t$, i.e., the two sequences do not cross. Then $b_M=\min(a_M,b_M) \leq L_*$ and
\[
    U_* \leq a_M+b_M = M^{-s} + b_M \leq M^{-s}+L_* \lesssim t^{-s}+L_* \lesssim \max(t^{-s}, L_*). \qedhere
\]
\end{proof}

\subsection{Proofs for attainability}
\begin{proof}[Proof of Proposition \ref{prop:singular_system}]
For $g = (g_0,\ldots,g_t) \in \R^{t+1}$, consider
\[
    (\cA_t g)(q) \equiv \sum_{x=0}^t g_x b_{t,x}(q).
\]
We have
\[
    \cA_t\psi_{k,t}
    = \lambda_{t,k}^{-1/2}\cA_t\cA_t^*\phi_k
    = \lambda_{t,k}^{-1/2}\cBD_t\phi_k
    = \sqrt{\lambda_{t,k}}\phi_k,
\]
where the first equality follows from the definition of $\psi_{k,t}$ and the adjoint relation $\cA_t^*=(t+1)\cB_t$, the second from the definition of the Bernstein--Durrmeyer operator
$\cBD_t=\cA_t\cA_t^*$, and the last from
Lemma \ref{lem:bernstein_durrmeyer_eigen}. Since
\[
    \E_p\{\psi_{k,t}(X)\mid Q=q\}
    = \sum_{x=0}^t \psi_{k,t}(x)b_{t,x}(q)
    = (\cA_t\psi_{k,t})(q),
\]
the result follows.
\end{proof}

\begin{proof}[Proof of Lemma \ref{lem:filtered_legendre}]
Fix an integer $m\geq1$ and $f\in L^\infty([0,1])$.
First note that, for $0\leq k\leq2m$,
\begin{align*}
    \eta\left(\frac{k}{m}\right)
    &=
    \sum_{j=k}^{2m}
    \left\{
        \eta\left(\frac{j}{m}\right)
        -
        \eta\left(\frac{j+1}{m}\right)
    \right\}\\
    &=
    \sum_{j=k}^{2m}(j-k+1)
    \sum_{a=0}^2(-1)^a\binom{2}{a}
    \eta\left(\frac{j+a}{m}\right)\\
    &=
    \sum_{j=k}^{2m}
    \binom{j-k+2}{2}
    \sum_{a=0}^3(-1)^a\binom{3}{a}
    \eta\left(\frac{j+a}{m}\right),
\end{align*}
It follows that we can write
\[
    (V_mf)(q)
    =
    \sum_{j=0}^{2m}
    \left\{
        \sum_{a=0}^3(-1)^a\binom{3}{a}
        \eta\left(\frac{j+a}{m}\right)
    \right\}
    \int_0^1 H_j(q,v)f(v)\,\dd v,
\]
where
\[
    H_j(q,v)
    \equiv
    \sum_{k=0}^j
    \binom{j-k+2}{2}\phi_k(q)\phi_k(v).
\]
By \citet[Section~9.7(2)]{szego75},
$H_j(q,v)\geq0$ for all $q,v\in[0,1]$.
Moreover, orthonormality and $\phi_0\equiv1$ give
\[
    \int_0^1 H_j(q,v)\,\dd v
    =
    \binom{j+2}{2}.
\]
It follows that
\[
    \left|\int_0^1 H_j(q,v)f(v)\,\dd v\right|
    \leq
    \|f\|_\infty\int_0^1 H_j(q,v)\,\dd v
    =
    \binom{j+2}{2}\|f\|_\infty.
\]

Now, applying the fundamental theorem of calculus yields
\[
    \sum_{a=0}^3(-1)^a\binom{3}{a}
    \eta\left(\frac{j+a}{m}\right) =
    -\int_0^{1/m}\int_0^{1/m}\int_0^{1/m}
    \eta^{(3)}(j/m+v_1+v_2+v_3)
    \,\dd v_1\,\dd v_2\,\dd v_3,
\]
and subsequently
\begin{align*}
    \left|
        \sum_{a=0}^3(-1)^a\binom{3}{a}
        \eta\left(\frac{j+a}{m}\right)
    \right|
    &\leq
    \|\eta^{(3)}\|_\infty
    \int_0^{1/m}\int_0^{1/m}\int_0^{1/m}
    1\,\dd v_1\,\dd v_2\,\dd v_3\\
    &=
    m^{-3}\|\eta^{(3)}\|_\infty.
\end{align*} 
As a result, we have
\begin{align*}
    \|V_mf\|_\infty
    &\leq
    m^{-3}\|\eta^{(3)}\|_\infty
    \sum_{j=0}^{2m}\binom{j+2}{2}\|f\|_\infty\\
    &=
    m^{-3}\binom{2m+3}{3}
    \|\eta^{(3)}\|_\infty\|f\|_\infty
    \lesssim
    \|\eta^{(3)}\|_\infty\|f\|_\infty,
\end{align*}
uniformly over $m\geq1$. Since $\eta$ is fixed and
$\|\eta^{(3)}\|_\infty<\infty$, this proves the result.
\end{proof}

\begin{proof}[Proof of Theorem \ref{thm:upper_bound}]
We first analyze the bias and then the variance.

\paragraph{Bias.} 
Note that Proposition \ref{prop:singular_system} gives
\begin{align*}
    \E_p\{\widehat p_m(u)\} &= \E_p \left\{\sum_{k=0}^{2m} \eta(k/m) \frac{\psi_{k,t}(X)}{\sqrt{\lambda_{t,k}}}\phi_k(u)\right\} \\
    &= \E_p\left\{\sum_{k=0}^{2m} \eta(k/m)\phi_k(Q)\phi_k(u)\right\} \\ 
    &= \sum_{k=0}^{2m} \eta(k/m)\int_0^1\phi_k(q)p(q)dq\phi_k(u) = V_mp(u).
\end{align*}

Since $\eta(k/m)=1$ for $k\le m$ and $\{\phi_0,\phi_1,\ldots,\phi_m\}$ form an orthonormal basis for $\Pi_m$, $V_m$ reproduces every polynomial in $\Pi_m$, the space of polynomials on $[0,1]$ of degree at most $m$, i.e., $V_m\pi = \pi$. Thus we can decompose the bias as
\[
    V_mp - p = V_mp -V_m\pi + \pi - p = V_m(p - \pi) - (p - \pi).
\]
It follows that
\[
    \norm{V_mp-p}_{\infty}
    \le
    \bigg\{1 + \sup_{\norm{f}_\infty \leq 1} \norm{V_mf}_\infty \bigg\}\norm{p-\pi}_{\infty} \leq (1+C)\norm{p-\pi}_{\infty},
\]
where the last inequality follows from Lemma \ref{lem:filtered_legendre}. Taking the infimum over $\pi\in\Pi_m$ and applying Jackson's theorem for $C^s$ functions 
gives
\[
    \norm{V_mp-p}_{\infty} \lesssim m^{-s},
\]
which implies
$
    \abs{\E_p\{\widehat p_m(u)\} -p(u)}
    \lesssim m^{-s}.
$

\paragraph{Variance.} Next consider the variance and, for notational simplicity, write
\[
    \widehat p_m(u)=\frac1n \sum_{i=1}^n h_{m,u}(X_i), \qquad h_{m,u}(x)
    \equiv \sum_{k=0}^{2m}\eta(k/m)
    \frac{\phi_k(u)}{\sqrt{\lambda_{t,k}}}\psi_{k,t}(x).
\]
We have
\begin{align*}
\operatorname{Var}_p \{\widehat p_m(u)\} &\le \frac1n\E\{h_{m,u}(X)^2\} \\
&\le \frac{p_{\max}}{n(t+1)}\sum_{x=0}^t h_{m,u}(x)^2 \\
&= \frac{p_{\max}}{n}\sum_{k=0}^{2m}\eta(k/m)^2\frac{\phi_k(u)^2}{\lambda_{t,k}} \\
&\le \frac{p_{\max}}{n}\sum_{k=0}^{2m}\frac{\phi_k(u)^2}{\lambda_{t,k}},
\end{align*}
where the second line follows from
\begin{align*}
    \E_p\{h_{m,u}(X)^2\} 
    &= \sum_{x=0}^t h_{m,u}(x)^2 \int_0^1 b_{t,x}(q)p(q)\dd q \\
    &\leq p_{\max}\sum_{x=0}^t h_{m,u}(x)^2 \int_0^1 b_{t,x}(q)\dd q \\
    &= \frac{p_{\max}}{t+1}\sum_{x=0}^t h_{m,u}(x)^2,    
\end{align*}
and the third from
\[
    \frac1{t+1}\sum_{x=0}^t h_{m,u}(x)^2 = \sum_{k=0}^{2m}\eta(k/m)^2\frac{\phi_k(u)^2}{\lambda_{t,k}},
\]
which holds by the orthonormality of $\psi_{k,t}$.

Putting both results together, we obtain
\[
    \Big[
    \E_p\{\widehat p_m(u)-p(u)\}^2
    \Big]^{1/2}
    \leq |\operatorname{bias}\{\widehat p_m(u)\}| + \sqrt{\Var\{\widehat p_m(u)\}}
    \lesssim 
    m^{-s}+\sqrt{\frac{\sum_{k=0}^{2m}\lambda_{t,k}^{-1}\phi_k(u)^2}{n}}. \qedhere
\]
\end{proof}

\subsection{Proofs for the overall minimax rate}
\begin{proof}[Proof of Corollary \ref{cor:minimax_rate}]
Corollary \ref{cor:lower_bound_bias_variance} contains the lower bound result, so we prove the upper bound. 

Fix $m\in \cM_t$ and let $j\equiv\floor{m/2}$. While $2j\leq m\leq t$, we have $j\geq m/3$ since $m_0\geq2$. Hence Theorem \ref{thm:upper_bound} together with $\E|Z|\leq(\E Z^2)^{1/2}$ gives
\[
    \cR_{n,t}
    \lesssim
    j^{-s}
    +
    \sqrt{\frac{S_{2j}(u)}{n}}
    \lesssim
    m^{-s}
    +
    \sqrt{\frac{S_m(u)}{n}},
\]
where the second inequality follows from $j\geq m/3$ and the monotonicity of $S_m(u)$. Taking the minimum gives the desired.
\end{proof}

\begin{proof}[Proof of Corollary~\ref{cor:simplified_minimax_rate}]
By Corollary~\ref{cor:minimax_rate}, it is enough to analyze
\[
    \min_{m\in\cM_t}
    \left\{
        m^{-s}+\sqrt{\frac{S_m(u)}{n}}
    \right\}.
\]
\paragraph{Lower bound.} 
First, set $a_\rho\equiv (1-\rho)/(1+\rho)$ and $b_\rho\equiv 2/(1-\rho)$ for notational simplicity. Note that Lemma~\ref{lem:inverse_eig_ratio} with both
indices equal to $k$ and $\lambda_{t,0}=1$ yields
\[
    \exp\left(a_\rho\frac{k^2}{t}\right)
    \leq \lambda_{t,k}^{-1}
    \leq \exp\left(b_\rho\frac{k^2}{t}\right),
    \qquad 0\leq k\leq m\leq\rho t.
\]
Then, \eqref{eq:proof_uniform_legendre_bound} implies
\[
    S_m(u)
    =
    \sum_{k=0}^m\lambda_{t,k}^{-1}\phi_k(u)^2
    \lesssim
    \sum_{k=0}^m\exp\left(b_\rho\frac{k^2}{t}\right)
    \lesssim
    m\exp\left(b_\rho\frac{m^2}{t}\right).
\]
At the same time, we have
\[
    S_m(u)
    \geq
    \sum_{k=\lceil m/2\rceil}^m
    \lambda_{t,k}^{-1}\phi_k(u)^2
    \geq
    \exp\left(a_\rho\frac{m^2}{4t}\right)
    \sum_{k=\lceil m/2\rceil}^m\phi_k(u)^2
    \gtrsim
    m\exp\left(a_\rho\frac{m^2}{4t}\right),
\]
where the last step follows from Lemma~\ref{lem:legendre_block},
since Assumption~\ref{assm:constants_homo} ensures that
$\{\lceil m/2\rceil,\ldots,m\}$ has width
$\lfloor m/2\rfloor+1\geq W_0$. It follows that
\[
    m\exp\left(a'_\rho\frac{m^2}{t}\right)
    \lesssim S_m(u)
    \lesssim
    m\exp\left(b_\rho\frac{m^2}{t}\right), \qquad a'_\rho \equiv \frac{a_\rho}{4},
\]
uniformly over $m\in\cM_t$ and $u\in[\delta,1-\delta]$.
Since $S_m(u)\gtrsim m$, minimizing $m^{-s}+\sqrt{m/n}$
over $m>0$, together with $m^{-s}\geq(\rho t)^{-s}$ for
$m\in\cM_t$, gives
\[
    \cR_{n,t}
    \gtrsim
    \max\left\{t^{-s},n^{-s/(2s+1)}\right\}.
\]

Now consider the intermediate $t$ regime. Let $\ell_{n,t}$ be defined in \eqref{eq:lnt} so that
\[
    1\leq\ell_{n,t}\leq1+\log n\leq2t,
    \qquad
    \frac{t^{s+1/2}}{n}=e^{1-\ell_{n,t}}.
\]
Fix $C>0$, depending only on $\rho$, sufficiently large that
$a'_\rho C^2\geq1$, and consider any $m\in\cM_t$.
If $m\leq C\sqrt{t\ell_{n,t}}$, then
\[
    m^{-s}\geq C^{-s}(t\ell_{n,t})^{-s/2}.
\]
Otherwise, the lower bound on $S_m(u)$ gives
\begin{align*}
    \frac{S_m(u)}{n}(t\ell_{n,t})^s
    &\gtrsim
    C\frac{(t\ell_{n,t})^{s+1/2}}{n}
    \exp\left(a'_\rho C^2\ell_{n,t}\right) \\
    &=
    C\ell_{n,t}^{s+1/2}
    \exp\{1+(a'_\rho C^2-1)\ell_{n,t}\}
    \gtrsim1.
\end{align*}
Thus every $m\in\cM_t$ satisfies
\[
    m^{-s}+\sqrt{\frac{S_m(u)}{n}}
    \gtrsim(t\ell_{n,t})^{-s/2}.
\]
Taking the minimum over $m\in\cM_t$ proves the intermediate lower bound.

\paragraph{Upper bound.} 
Now, fix $\gamma>0$ with $\gamma\leq\rho/\sqrt{2}$ and $b_\rho\gamma^2\leq1/2$.

For the small $t$ regime, choose 
\[
    m=\max\{m_0,\floor{\gamma t}\} \in \cM_t,
\]
so that $m\asymp t$. Since $m^2/t\leq\rho m_0+\gamma^2t$ and $n\geq e^t$, we have
\[
    \frac{S_m(u)}{n}
    \lesssim
    t\exp\{-(1-b_\rho\gamma^2)t\}
    \lesssim t^{-2s}.
\]
Together with $m^{-s}\asymp t^{-s}$, this yields $\cR_{n,t} \lesssim t^{-s}$.

For the large $t$ regime, choose
\[
    m=\max\{m_0,\floor{\gamma n^{1/(2s+1)}}\} \in \cM_t,
\]
which satisfies $m\asymp n^{1/(2s+1)}$ and gives $m^2/t\leq\rho m_0+\gamma^2\lesssim1$.
Hence $S_m(u)\lesssim m$ and subsequently
\[
    m^{-s}+\sqrt{\frac{S_m(u)}{n}}
    \lesssim n^{-s/(2s+1)}.
\]

Finally, for the intermediate regime, choose
\[
    m=\max\{m_0,\floor{\gamma\sqrt{t\ell_{n,t}}}\} \in \cM_t,
\]
which satisfies $m\asymp\sqrt{t\ell_{n,t}}$.
Since $m^2/t\leq\rho m_0+\gamma^2\ell_{n,t}$ and
$t^{s+1/2}/n=e^{1-\ell_{n,t}}$, we have
\[
    \frac{m^{2s}S_m(u)}{n}
    \lesssim
    \ell_{n,t}^{s+1/2}\exp\{1-(1-b_\rho\gamma^2)\ell_{n,t}\}
    \lesssim1.
\]
As a result,
\[
    \cR_{n,t}
    \lesssim
    m^{-s}
    \asymp
    (t\ell_{n,t})^{-s/2}.
\]

Combining the bounds proves the corollary.
\end{proof}

\begin{proof}[Proof of Theorem~\ref{thm:fundamental_lower_bound}]
For $t\geq t_0$, the result follows from the lower bound in
Corollary~\ref{cor:simplified_minimax_rate}. The remaining bounded values of $t$ are covered by the small $t$ lower bound in Theorem~\ref{thm:trial_lower_bound} after adjusting the implicit constant.
\end{proof}

\subsection{Proofs for trial heterogeneity}
\begin{proof}[Proof of Proposition \ref{prop:aggregate_eigendecomposition}]
    If $k\leq t_i$, then Lemma \ref{lem:bernstein_durrmeyer_eigen} gives $\cBD_{t_i}\phi_k = \lambda_{t_i,k}\phi_k$. If instead $k>t_i$, then every Bernstein basis function $b_{t_i,x}$ belongs to $\Pi_{t_i}$, while $\phi_k$ is orthogonal to
    $\Pi_{t_i}$. It follows that
    \[
        (\cB_{t_i}\phi_k)(x)
        =
        \int_0^1
        b_{t_i,x}(q)\phi_k(q)\dd q
        =
        0,
        \qquad
        x=0,\ldots,t_i,
    \]
    and therefore $\cBD_{t_i}\phi_k=0 = \lambda_{t_i,k}\phi_k$. 
    Summing over
    $i$ gives
    \[
        \cBD_{\mathbf t}\phi_k = \sum_{i=1}^n \cBD_{t_i}\phi_k = \sum_{i=1}^n\lambda_{t_i,k}\phi_k = \cI_k\phi_k. \qedhere
    \]    
\end{proof}

\begin{proof}[Proof of Lemma \ref{lem:heterogeneous_perturbation}]
For $j=0,\ldots,r+1$, uniformly over $u\in[\delta,1-\delta]$,
\[
    \|g_{\cJ_{\intm}^{\mathrm{het}}}^{(j)}\|_\infty
    \leq
    \frac{
        \sum_{k\in\cJ_{\intm}^{\mathrm{het}}}
        |\phi_k(u)|
        \left\|
            \frac{\dd^j}{\dd q^j}\{w_R\phi_k\}
        \right\|_\infty
    }{
        \sum_{k\in\cJ_{\intm}^{\mathrm{het}}}
        \phi_k(u)^2
    } 
    \lesssim \frac{1}{m}\sum_{k\in\cJ_{\intm}^{\mathrm{het}}}k^j
    \lesssim m^j,
\]
where the second inequality follows from
Lemma~\ref{lem:legendre_block},
\eqref{eq:proof_uniform_legendre_bound}, and
\eqref{eq:proof_tapered_legendre_derivative}, 
and the last from $|\cJ_{\intm}^{\mathrm{het}}|=m$ and $k\lesssim m$ for every $k\in\cJ_{\intm}^{\mathrm{het}}$.

Applying the interpolation inequality with
$f=g_{\cJ_{\intm}^{\mathrm{het}}}^{(r)}$ yields
\[
    [g_{\cJ_{\intm}^{\mathrm{het}}}^{(r)}]_{C^\alpha([0,1])}
    \lesssim
    (m^r)^{1-\alpha}(m^{r+1})^\alpha
    =m^s. \qedhere
\]
\end{proof}

\begin{proof}[Proof of Lemma \ref{lem:heterogeneous_operator_bound}]
We know from the proof of Lemma~\ref{lem:operator_bound} that multiplication by
$w_R$ can lower a Legendre degree by at most $2R$. Since the smallest degree in $\cJ_{\intm}^{\mathrm{het}}$ is $m+2R+1$, $g_{\cJ_{\intm}^{\mathrm{het}}}$ contains only degrees greater than $m$. Thus, Proposition \ref{prop:aggregate_eigendecomposition} and the monotonicity of $\cI_\ell$ yield
\[
    \langle
        g_{\cJ_{\intm}^{\mathrm{het}}},
        \cBD_{\mathbf t} g_{\cJ_{\intm}^{\mathrm{het}}}
    \rangle_{L_2}
    \leq \cI_m
    \|g_{\cJ_{\intm}^{\mathrm{het}}}\|_{L_2}^2 
    \leq
    \frac{
        \cI_m\|w_R\|_\infty^2
    }{
        \sum_{k\in\cJ_{\intm}^{\mathrm{het}}}
        \phi_k(u)^2
    }
    \lesssim \frac{\cI_m}{m},
\]
where the second inequality follows from the definition of
$g_{\cJ_{\intm}^{\mathrm{het}}}$ and orthonormality,
and the last from Lemma~\ref{lem:legendre_block} 
together with $\|w_R\|_\infty\leq\{4\delta(1-\delta)\}^{-R}$.
\end{proof}

\begin{proof}[Proof of
Corollary~\ref{cor:heterogeneous_statistical_lower_bound}]
Fix $m\in\cM_{\mathbf t}$. Since
$\int_0^1g_{\cJ_{\intm}^{\mathrm{het}}}(q)\dd q=0$,
Lemma~\ref{lem:heterogeneous_perturbation} and the same argument
as in the proof of Theorem~\ref{thm:stat_lower_bound} give
\[
    p_{\pm,\Delta_m^{\mathrm{het}}}\in\Ht,
    \qquad
    p_{-,\Delta_m^{\mathrm{het}}}(q)\geq\frac12,
    \quad q\in[0,1],
\]
for $c>0$ sufficiently small.

Let $q_{i,m,\pm}$ denote the probability mass functions of $X_i$
under these alternatives. Then
\[
    \sum_{i=1}^n\chi^2(q_{i,m,+},q_{i,m,-})
    \lesssim
    (\Delta_m^{\mathrm{het}})^2
    \left\langle
        g_{\cJ_{\intm}^{\mathrm{het}}},
        \cBD_{\mathbf t}g_{\cJ_{\intm}^{\mathrm{het}}}
    \right\rangle_{L_2}
    \lesssim
    (\Delta_m^{\mathrm{het}})^2\frac{\cI_m}{m}
    \lesssim c^2,
\]
where the first inequality follows by applying the $\chi^2$
calculation in the proof of
Theorem~\ref{thm:stat_lower_bound} to each observation,
the second from Lemma~\ref{lem:heterogeneous_operator_bound},
and the last from the definition of $\Delta_m^{\mathrm{het}}$.
By tensorization,
\[
    \chi^2\left(
        \bigotimes_{i=1}^n q_{i,m,+},
        \bigotimes_{i=1}^n q_{i,m,-}
    \right)
    \leq
    \exp\left\{
        \sum_{i=1}^n\chi^2(q_{i,m,+},q_{i,m,-})
    \right\}-1
    \lesssim c^2,
\]
for $c>0$ sufficiently small. 

Since $g_{\cJ_{\intm}^{\mathrm{het}}}(u)=1$, the alternatives
differ at $u$ by $2\Delta_m^{\mathrm{het}}$. Fixing $c>0$
sufficiently small and applying Le Cam's two-point lemma
therefore gives
\[
    \cR_{n,\mathbf t}
    \gtrsim
    \Delta_m^{\mathrm{het}}
    \asymp
    \min\left\{
        m^{-s},
        \sqrt{\frac{m}{\cI_m}}
    \right\}.
\]
Maximizing over $m\in\cM_{\mathbf t}$ gives the desired bound.
\end{proof}

\begin{proof}[Proof of Corollary \ref{cor:heterogeneous_trial_lower_bound}]
The proof follows that of Theorem \ref{thm:trial_lower_bound} with the common trial parameter $t$ replaced by $T$.
\end{proof}

For the proofs of Theorem \ref{thm:heterogeneous_upper_bound} and Corollary \ref{cor:heterogeneous_minimax_rate}, we write
\[
    S_{m,\mathbf t}(u) \equiv \sum_{k=0}^m \frac{\phi_k(u)^2}{\cI_k},
\]
for notational simplicity, which reduces to $S_{m,\mathbf t}(u)=S_m(u)/n$ under trial homogeneity.

\begin{proof}[Proof of Theorem \ref{thm:heterogeneous_upper_bound}]
Since every $\widehat\theta_k$ is unbiased, we again have
\[
    \E_p\{\widehat p_m^{\mathrm{het}}(q)\}
    =
    (V_mp)(q).
\]
Lemma \ref{lem:filtered_legendre} and the same polynomial approximation argument as in the proof of Theorem \ref{thm:upper_bound} therefore yield
\[
    \norm{\E_p(\widehat p_m^{\mathrm{het}})-p}_\infty
    \lesssim
    m^{-s}.
\]

For the variance term, write
\[
    \widehat p_m^{\mathrm{het}}(u) = \sum_{i=1}^nh_{i,m,u}(X_i), \qquad h_{i,m,u}(x)
    \equiv
    \sum_{k=0}^{\min(2m,t_i)}
    \eta(k/m)
    \frac{\sqrt{\lambda_{t_i,k}}}{\cI_k}
    \phi_k(u)\psi_{k,t_i}(x),
\]
for notational simplicity. We have
\begin{align*}
    \Var_p\{\widehat p_m^{\mathrm{het}}(u)\}
    &\leq
    \sum_{i=1}^n
    \E_p\{h_{i,m,u}(X_i)^2\}\\
    &\leq
    p_{\max}
    \sum_{k=0}^{2m}
    \eta(k/m)^2
    \frac{\phi_k(u)^2}{\cI_k^2}
    \sum_{i:t_i\geq k}\lambda_{t_i,k}\\
    &=
    p_{\max}
    \sum_{k=0}^{2m}
    \eta(k/m)^2
    \frac{\phi_k(u)^2}{\cI_k}\\
    &\leq
    p_{\max}S_{2m,\mathbf t}(u).
\end{align*}
Combining the bias and variance bounds and $\E|Z|\leq(\E Z^2)^{1/2}$ proves the theorem.
\end{proof}

\begin{proof}[Proof of Corollary \ref{cor:heterogeneous_minimax_rate}]
The lower bound follows directly from Theorem~\ref{thm:heterogeneous_lower_bound}. For the upper bound, fix $m \in \cM_{\mathbf{t}}$ and, as in the proof of Corollary~\ref{cor:minimax_rate}, let $j\equiv\floor{m/2}$ so that we again have $j\geq m/3$ while $2j\leq m\leq T$. Hence, Theorem~\ref{thm:heterogeneous_upper_bound} gives
\[
    \cR_{n,\mathbf t}
    \lesssim
    j^{-s}+\sqrt{S_{2j,\mathbf t}(u)}.
\]
Since $\cI_k$ is nonincreasing and $2j\leq m$, we have
\[
    S_{2j,\mathbf t}(u)
    =
    \sum_{k=0}^{2j}\frac{\phi_k(u)^2}{\cI_k}
    \leq
    \frac{1}{\cI_m}\sum_{k=0}^m\phi_k(u)^2
    \lesssim
    \frac{m}{\cI_m},
\]
where the last inequality follows from
\eqref{eq:proof_uniform_legendre_bound}.
Combining both bounds and using $j\geq m/3$, it follows that
\[
    \cR_{n,\mathbf t}
    \lesssim
    m^{-s}+\sqrt{\frac{m}{\cI_m}}.
\]
Taking the minimum over $m \in \cM_{\mathbf{t}}$ gives the desired.
\end{proof}

\begin{proof}[Proof of Corollary~\ref{cor:polynomial_trial_profile}]
Suppose, for notational simplicity, that the trial parameters are sorted in decreasing order. 

\paragraph{Lower bound.} By the eigenvalue
formula and $\log(1+x)\geq x/(1+x)$ for $x\geq0$,
\[
    -\log\lambda_{t,m}
    \geq
    \sum_{k=1}^m\frac{2k}{t+k+1}
    \geq \frac{m^2}{3t},
    \qquad 1\leq m\leq t.
\]
Since $\lambda_{t,m}=0$ for $m>t$, the subsequent upper bound $\lambda_{t,m} \leq \exp\{-m^2/(3t)\}$ holds for every integer $m \geq 1$. Hence, for some fixed $c>0$, we have
\[
    \cI_m
    \leq
    \sum_{i=1}^n\exp\{-cm^2(i/n)^{1/\beta}\}
    \leq
    n\int_0^\infty\exp(-cm^2x^{1/\beta})\,\dd x
    \lesssim nm^{-2\beta},
\]
where the second inequality follows because the integrand is nonincreasing, and the last by substituting $x=m^{-2\beta}y$. Thus, we have for every $m\in\cM_{\mathbf t}$ that
\[
    m^{-s}+\sqrt{\frac{m}{\cI_m}}
    \gtrsim m^{-s}+n^{-1/2}m^{\beta+1/2}
    \geq n^{-s/(2s+2\beta+1)}.
\]
The last inequality follows from the first term when $m\leq n^{1/(2s+2\beta+1)}$ and from the second otherwise.

\paragraph{Upper bound.} Choose $m\asymp n^{1/(2s+2\beta+1)}$. Since $T\asymp n^{1/\beta}$, this degree belongs to
$\cM_{\mathbf t}$ for sufficiently large $n$. 
The condition $t_i\gtrsim(n/i)^{1/\beta}$ therefore implies that at least a constant multiple of $nm^{-2\beta}$ observations satisfy $t_i\geq m^2/\rho$ for sufficiently large $n$. For these observations, $m\leq m^2\leq\rho t_i$, so Lemma~\ref{lem:inverse_eig_ratio} gives
\[
    \lambda_{t_i,m}
    \geq
    \exp\left(
        -\frac{2}{1-\rho}\frac{m^2}{t_i}
    \right)
    \geq
    \exp\left(-\frac{2\rho}{1-\rho}\right)
    \gtrsim1.
\]
It follows that 
\[
    m^{-s}+\sqrt{\frac{m}{\cI_m}} \lesssim m^{-s}+n^{-1/2}m^{\beta+1/2}\asymp n^{-s/(2s+2\beta+1)}, 
\]
since $\cI_m\gtrsim nm^{-2\beta}$ at this degree. 
\end{proof}

\end{document}